\newtheorem{assumption}{Assumption}
\pgfplotsset{every tick label/.append style={font=\footnotesize}}
\newcommand{\TT}{\ensuremath{\mathsf{\tiny{T}}}}
\newcommand{\T}{^{\TT}}
\newcommand{\diff}[1][]{\mathrm{d}#1}
\newcommand{\dt}{\diff t }
\newcommand{\hchange}[1]{#1}
\newcommand{\hchangeII}[1]{#1}
\begin{document}

\title{Optimization with Momentum: Dynamical, Control-Theoretic, and Symplectic Perspectives}

\author{\name Michael Muehlebach \email michaelm@berkeley.edu \\
       \addr Department of Electrical Engineering and Computer Sciences\\
       Department of Statistics\\
       University of California\\
       Berkeley, CA 94720-1776, USA
       \AND
       \name Michael I.\ Jordan \email jordan@cs.berkeley.edu \\
       \addr Department of Electrical Engineering and Computer Sciences\\
       Department of Statistics\\
       University of California\\
       Berkeley, CA 94720-1776, USA}

\editor{Ohad Shamir}

\maketitle

\begin{abstract}
We analyze the convergence rate of various momentum-based optimization algorithms from a dynamical systems point of view. Our analysis exploits fundamental topological properties, such as the continuous dependence of iterates on their initial conditions, to provide a simple characterization of convergence rates. In many cases, closed-form expressions are obtained that relate algorithm parameters to the convergence rate. The analysis encompasses discrete time and continuous time, as well as time-invariant and time-variant formulations, and is not limited to a convex or Euclidean setting. In addition, the article rigorously establishes why symplectic discretization schemes are important for momentum-based optimization algorithms, and provides a characterization of algorithms that exhibit accelerated convergence.
\end{abstract}

\begin{keywords}
Gradient-based optimization, convergence rate analysis, Nesterov acceleration, symplectic integration, nonconvex optimization
\end{keywords}

\section{Introduction}
Optimization problems lie at the heart of many machine-learning formulations. As a result, a better understanding of optimization algorithms, combined with implementations that target distinctive properties of machine-learning problems, have contributed significantly to the recent progress in the field.

One of the most popular methods for large-scale optimization is the (stochastic) gradient method, due to its simplicity, wide applicability, and efficiency. However, in the deterministic setting, where gradients are evaluated exactly, it has been shown that better convergence rates can often be achieved by leveraging two successive gradients~\citep{NesterovOrgPaper}. Based on analogies to mechanical systems these methods are referred to \emph{momentum-based optimization algorithms}, and can be viewed as particular discretizations of continuous-time harmonic oscillators, \citep{PolyakHeavyBall}. Yet, even for the class of strongly convex functions, most proofs that establish the superior convergence are algebraic and provide little qualitative understanding. As a consequence, there is little guide to the generality or robustness of the acceleration phenomenon across instances of optimization problems.

This article characterizes the convergence rate of momentum-based optimization algorithms by taking fundamental topological properties into account. In many cases our analysis leads to closed-form expressions that relate the convergence rate to the different algorithm parameters, such as the step size and the damping.  The analysis provides insight into the design of algorithms that, for example, require little tuning when applied to large-scale and ill-conditioned optimization problems. For simplicity of notation, we focus on the Euclidean setting, but we note that the scope of our analysis is not limited to Euclidean problems.  

We will derive convergence rates of the form 
\begin{equation}
|x(t)-x^*|\leq C_\text{c} |x(0)-x^*| \rho_\text{c}(t), \label{eq:conv1}
\end{equation}
where $|x(t)-x^*|$ is a distance measure between the iterate $x(t)$ and the isolated local minimum $x^*$, $t$ refers to time or the iteration number, $C_\text{c}$ is a constant which does not depend on $x(0)$, and $\rho_\text{c}(t)$ is monotonically decreasing, satisfies $\rho_\text{c}(0)=1$, and characterizes the convergence rate. 
An important aspect of the analysis is to characterize how the convergence rate $\rho_\text{c}$ is affected by the shape of the objective function about an isolated local minimum. The local shape is summarized by the (condition) number $\kappa$, which is defined as the ratio between the maximum and minimum curvature about a local minimum. We call an isolated minimum degenerate if the minimum curvature vanishes in at least one direction. The main results and insights, which will be rigorously derived in the remainder of the article, are summarized as follows:
\begin{itemize}
\item The convergence rate, characterized by $\rho_\text{c}$ in \eqref{eq:conv1}, is uniquely determined by the \emph{local} shape of the objective function about an isolated minimum. The global shape of the objective function determines the constant $C_\text{c}$ and the set of initial conditions $x(0)$ for which \eqref{eq:conv1} holds. In other words, the global shape (for example described by convexity) determines the stability and region of attraction of a local minimum, whereas the local shape, i.e., the curvature at an isolated minimum, determines the convergence rate.

\item An algorithm is called accelerated if the convergence rate scales favorably for ill-conditioned optimization problems, meaning that the convergence rate scales favorably for large $\kappa$. Acceleration is therefore a statement about the robustness of the convergence rate with respect to changes in the curvature, which is well defined for continuous-time as well as for discrete-time formulations.

\item Accelerated convergence is generic to momentum-based optimization algorithms, provided that the damping scales with $1/\sqrt{\kappa}$ for large $\kappa$. Neither the evaluation of the gradient at a shifted position, nor a specifically engineered damping parameter, as for example proposed in \citet[Sec.~2.2]{NesterovIntro}, are necessary. 

\item From a physics perspective, a momentum-based optimization algorithm can be thought of as a mass-spring-damper system, where the spring potential is given by the objective function. The algorithm design specifies the damping. The fact that the system has inertia (due to the second-order dynamics) implies that the convergence rate is robust to small changes in the spring, i.e., robust to small changes in the curvature of the objective function. The inertia gives the system the tendency to keep its velocity, which, provided that the damping is chosen appropriately, implies that small changes in the spring will not slow down convergence. This captures the mechanism behind accelerated convergence.

\item The convergence rate typically becomes arbitrarily small for large $\kappa$ (if the dynamics are time-varying this statement applies for $t\rightarrow \infty$). The underlying dynamics are therefore close to conservative, which means that great care is required for the discretization. A discretization that introduces an artificial energy drift, such as the explicit forward Euler discretization, for example, might even lead to instability, which clearly makes a favorable scaling of the convergence rate with $\kappa$ impossible. This motivates the use of symplectic discretization schemes, which preserve the Hamiltonian structure of the underlying conservative part of the dynamics.

\item By introducing time-varying dynamics, which are obtained by adjusting the damping parameters with the number of iterations, the linear convergence rate is improved with an additional sublinearly converging term. If the local minimum is close to degenerate, the convergence is dominated by this term, as the rate of the linear convergence becomes arbitrarily small. In that way, the analysis relates the non-degenerate and degenerate cases.
\end{itemize}

\noindent\emph{Related work:} The phenomenon of acceleration, which fundamentally motivates the use of momentum, has puzzled many researchers for almost 40 years. We do not attempt to give a full overview of the literature, but highlight some of the most recent work. Important contributions were made by \citet{Bubeck}, who proposed an accelerated algorithm that has a geometric interpretation, by \citet{Allen-Zhu}, who show that coupling of gradient and mirror descent can lead to acceleration, and \citet{LessardRecht}, who propose a general control-theoretic analysis framework. The framework has subsequently been extended and refined, for example by \citet{Scherer}, who analyzed and quantified robustness and convergence trade-offs. Other work includes \citet{Diakonikolas}, who unify the analysis of first-order methods by imposing certain decay conditions, and \cite{Scieur}, who interpret the accelerated gradient method as a multi-step discretization of gradient flow. 

The work of \citet{SuAcc} and \citet{KricheneAcc} showed that the trajectories of the accelerated gradient method approach the solutions of a certain second-order ordinary differential equation. The resulting differential equation was analyzed in further detail in \citet{Attouch} and placed within a variational framework by \citet{WibisonoVariational}. This motivated further research on structure-preserving integration schemes for discretizing continuous-time optimization algorithms \citep{Betancourt}. While the continuous-time formulation of \citet{SuAcc} is based on an accelerated optimization algorithm for smooth and convex objective functions (i.e., the case where the minimum could be degenerate), an alternative for strongly convex objective functions (non-degenerate case) was proposed in \cite{Ebenbauer}. In \citet{Gui} and \citet{ourWork}, important geometric properties of the underlying dynamics are highlighted and corresponding structure-preserving discretization schemes are analyzed. In addition, \citet{Gui} propose relativistic dynamics, as these naturally bound the rate of change of the position by the speed of light, which is supposed to prevent overshoot. The work of \citet{HDM} suggests that convergence can be improved by a suitable choice of the kinetic energy (for example choosing the kinetic energy to be the convex conjugate of the objective function).

While this line of work suggests strong parallels between dynamical systems and optimization algorithms, a main part of the research focuses on engineering Lyapunov functions or estimate sequences that arise separately from the dynamical system. These typically arise from educated guesses, or from the solutions of semidefinite programs, and enable the precise characterization of the convergence rate for a given algorithm. The aim of our work is different: We exploit fundamental topological properties of dynamical systems in order to provide an explicit characterization of the convergence rate up to constants, which, in most cases, can be evaluated in closed form. Hence, our analysis does not aim at precisely bounding the number of iterations needed to reach a local minimum with a certain tolerance, but provides a qualitative and quantitative characterization of the convergence rate up to constants. The analysis does not rely on convexity and applies both to continuous-time and discrete-time formulations. This provides a tool to understand how the convergence rate depends on various algorithm parameters and the shape of the objective function, and it characterizes the mechanism leading to acceleration. Our characterization of the convergence rate is both necessary and sufficient.

In the context of nonconvex optimization, the aim is typically to find a local minimum of a twice continuously differentiable function that satisfies certain non-degeneracy conditions. It has been shown that gradient descent converges to a local minimum from almost every initial condition, which is due to the fact that local maxima and saddle points are unstable equilibria \citep{LeeGD}. The same reasoning applies to gradient descent with momentum. However, even though gradient descent and gradient descent with momentum are guaranteed (in an almost everywhere sense) to ultimately reach a local minimum, it may take an arbitrarily long time to escape saddle points and local maxima. A major concern arises due to the fact that an objective function with $n$ decision variables might have $n-1$ different isolated saddle points, which have to be traversed before reaching a local minimum. This implies that even for generic initialization strategies, the worst-case convergence rate depends on $n$ \citep{SimonDu}. This article is concerned with characterizing the convergence of momentum-based algorithms up to a constant factor. We will not analyze how this factor scales with $n$; the example from \cite{SimonDu} suggests that the factor scales exponentially in $n$. However, the results from \citet{Ge} indicate that adding small random perturbations to gradient descent can improve the convergence rate and reduce the dimension-dependence of the convergence rate on $n$ to polylog factors. A similar result applies likewise to gradient descent with momentum \citep{ChiAcc}. A recent account of the state-of-the art is given in \citet{Chi}, for example.

The poor scaling in the dimension $n$ in the nonconvex case can be avoided by explicitly leveraging curvature information, as for example proposed in \citet{NesterovCubic} and \citet{Curtis}. However, the computational cost per iteration of these methods is generally higher and increases with $n$. For simplicity, this article focuses on first-order algorithms with momentum, even though many ideas generalize in straightforward ways.

\medskip
\noindent\emph{Outline:} 
This article views optimization algorithms as dynamical systems. Two cases are distinguished: Section~\ref{Sec:TI} assumes that the algorithm parameters are fixed, leading to time-invariant dynamics. The more general case, where the parameters are allowed to vary with the number of iterations, is discussed in Section~\ref{Sec:TV}.

Section~\ref{Sec:TI} starts by introducing the notation and defining the scope of the analysis. A momentum-based optimization algorithm is understood as a second-order dynamical system,\footnote{In this context, the term ``second order'' has nothing to do with accessing higher derivatives of the objective function.} where the local minima of the objective function correspond to asymptotically stable equilibria. Section~\ref{Sec:Ex1} and Section~\ref{Sec:Ex2} introduce two prototypical examples of momentum-based optimization algorithms. These are generalized versions of Nesterov's acclerated gradient scheme \citep[Ch.~2.2]{NesterovIntro}, and also include Heavy-Ball methods \citep{PolyakHeavyBall}. \hchange{By focusing on smooth dynamical systems based on smooth objective functions, our analysis excludes, for example, the treatment of constraints via (non-differentiable) indicator functions, as well as optimization algorithms with restart schemes.} The subsequent result derived in Section~\ref{Sec:CharTI} highlights our assertion that fundamental topological properties can be exploited for characterizing the convergence rate of momentum-based optimization algorithms. The results are applied to the analysis of the two prototypical examples in Section~\ref{Sec:OptiEx1} and Section~\ref{Sec:OptiEx2}, leading to a broad characterization of the phenomenon of acceleration. In addition, Section~\ref{Sec:OptiEx2} rigorously motivates the use of symplectic discretization schemes in the context of optimization: The symplectic discretization enables the computation of a modified energy function that can be used for stability analysis.

The structure of Section~\ref{Sec:TV} is analogous to Section~\ref{Sec:TI}. The general result for characterizing the convergence rate in the time-varying case is presented in Section~\ref{Sec:CharRateTV} and illustrated with two subsequent examples in Section~\ref{Sec:OptiEx1TV} and Section~\ref{Sec:OptiEx2TV}. The results highlight the fact that time-varying damping parameters can speed up the convergence rate by an additional sublinearly converging factor. For ill-conditioned problems, this additional gain in the convergence rate becomes significant. The analysis connects the non-degenerate case with the degenerate case and motivates the update rules for the parameters of well-known accelerated gradient schemes, such as \citet[p.~90, Constant Step Scheme II]{NesterovIntro}.

The article concludes with a summary and final remarks in Section~\ref{Sec:Conclusion}.



\section{The Time-Invariant Case}\label{Sec:TI}
\subsection{Introduction}
Throughout the article we consider the problem of minimizing the function $f: \mathbb{R}^n\rightarrow \mathbb{R}$, which satisfies the following assumption:
\begin{assumption}\label{Ass:funfund}
The function $f$ has a Lipschitz-continuous gradient. The critical points are non-degenerate and isolated.\footnote{A critical point is non-degenerate if the Hessian $\diff^2 f/\diff x^2$ is nonsingular for all $x\in\mathbb{R}^n$ (almost everywhere) in a neighborhood of the critical point.}
\end{assumption}
The Lipschitz continuity of the gradient is important for ensuring that the resulting continuous-time trajectories exist and are unique. The fact that $f$ has isolated non-degenerate critical points excludes pathological cases where the function has multiple connected local minima. The less pathological case, where the local minima are isolated but have vanishing curvature in certain directions, can be obtained via a limit argument, as will be discussed in Section~\ref{Sec:TV}. 

Due to the Lipschitz continuity of the gradient, the Hessian exists almost everywhere and is essentially bounded. We will summarize the (essential) upper and lower bounds on the Hessian with the two constants $C_\text{f}\geq 0$ and $\bar{C}_\text{f}\geq 0$:
\begin{equation}
-C_\text{f} |u|^2 \leq u\T \frac{\partial^2 f}{\partial x^2} u \leq \bar{C}_\text{f} |u|^2, \quad\forall u\in \mathbb{R}^n, \forall x\in \mathbb{R}^n ~\text{(a.e.)}, \label{eq:defCf}
\end{equation}
where $|\cdot|$ denotes the Euclidean norm. Thus for $C_\text{f}=0$ the function is convex, for $C_\text{f}>0$ it is nonconvex.

In order to simplify our exposition, we will consider functions that satisfy:
\begin{assumption}\label{Ass:fun}
In addition to Assumption~\ref{Ass:funfund}, $f$ has a second derivative that is Lipschitz continuous.
\end{assumption}
None of the subsequent results will explicitly depend on a Lipschitz constant related to the second derivative. Hence, under very mild conditions, all our results characterizing the convergence rate $\rho_\text{c}(t)$ in \eqref{eq:conv1} apply when $f$ satisfies Assumption~\ref{Ass:funfund} instead of Assumption~\ref{Ass:fun}.\footnote{However, the smoothness of $f$ might affect the constant $C_\text{c}$ in \eqref{eq:conv1}. The important difference to the higher-order smoothness assumptions made in \citet{NesterovCubic} or \citet{BubeckHigherOrder}, for example, is that Assumption~\ref{Ass:fun} does not impose \emph{predefined} bounds on the higher derivatives of $f$, which would restrict the shape of $f$.} This is further discussed in Appendix~\ref{App:Smooth}.

Without loss of generality, we further assume that $f$ has a local minimum at $x^*=0$ and that $f(x^*)=f(0)=0$. The convergence rate of a momentum-based optimization algorithm will depend on the local shape of the local minimum in question, which is determined by the constants
\begin{equation}
\mu:=\min_{u\in \mathbb{R}^n, |u|^2=1} u\T \left.\frac{\partial^2 f}{\partial x^2}\right|_{x=x^*} u, \quad L:=\max_{u\in \mathbb{R}^n, |u|^2=1} u\T \left.\frac{\partial^2 f}{\partial x^2}\right|_{x=x^*} u, \quad \kappa:=\frac{L}{\mu}. \label{eq:defmul}
\end{equation}
In case Assumption~\ref{Ass:fun} is replaced with Assumption~\ref{Ass:funfund}, the above constants are defined via the essential supremum and essential infimum of the Hessian in a neighborhood of $x^*$.

We model a momentum-based optimization algorithm either as a continuous-time or discrete-time dynamical system of the form
\begin{gather}
q^+(t)=g_{\text{q}}(q(t),p(t)), \quad
p^+(t)=g_{\text{p}}(q(t),p(t)), \quad \forall t\in I, \label{eq:ODE11}\\
q(0)=q_0, \quad p(0)=p_0, \label{eq:ODE12}
\end{gather}
where the superscript $+$ denotes either differentiation with respect to $t$ (continuous-time setting), in which case $I=\mathbb{R}_{\geq 0}$, or a unit time-shift (i.e., $q^+(t)=q(t+1)$, discrete-time setting), in which case $I=\{0,1,2,\dots\}$. The nonnegative real numbers are denoted by $\mathbb{R}_{\geq 0}$, whereas the positive real numbers are denoted by $\mathbb{R}_{>0}$. The dynamics
\begin{equation}
g_\text{q}: \mathbb{R}^n \times \mathbb{R}^n \rightarrow \mathbb{R}^n, \quad g_\text{p}: \mathbb{R}^n \times \mathbb{R}^n \rightarrow \mathbb{R}^n,
\end{equation}
are implicitly dependent on $\nabla f$ and are assumed to satisfy the following assumption.\footnote{In the same way that the additional assumptions on the Hessian of $f$ can be relaxed to mere Lipschitz continuity of $\nabla f$, Assumption~\ref{Ass:Cont} can be relaxed to Lipschitz continuity of $g_\text{q}$ and $g_\text{p}$; cf.\ Appendix~\ref{App:Smooth}.}
\begin{assumption}\label{Ass:Cont}
The dynamics $g_\text{q}$ and $g_\text{p}$ are continuously differentiable in both arguments and the derivatives are Lipschitz continuous.
\end{assumption}

In the continuous-time case, Assumption~\ref{Ass:Cont} implies that the resulting trajectories exist and are unique for all times $t\in I$ \citep[p.~93, Corollary~3]{DifferentialEquationsArnold}. Differentiability also implies that the dynamics can be linearized about an equilibrium, which typically provides a means to study the local behavior of the resulting trajectories. In addition, Assumption~\ref{Ass:Cont} implies that, over a finite time interval, trajectories are continuously dependent on their initial conditions \citep[p.~93, Corollary~4]{DifferentialEquationsArnold}. These topological properties will be exploited in the following.



In order to simplify notation, we define 
\begin{equation}
g: \mathbb{R}^{2n} \rightarrow \mathbb{R}^{2n}, \quad g:=(g_\text{q},g_\text{p}),
\end{equation} 
and introduce $z(t):=(q(t),p(t))$ for all $t\in I$. Moreover, the map  $(q_0,p_0) \rightarrow (q(t),p(t))$ is denoted by $\varphi_t: \mathbb{R}^{2n} \rightarrow \mathbb{R}^{2n}$, $t\in I$, and is referred to as the flow of the dynamical system \eqref{eq:ODE11} - \eqref{eq:ODE12}.

Next, we provide a formal definition of a momentum-based optimization algorithm.
\begin{definition}\label{def:MomBasedOA}
We call the dynamical system \eqref{eq:ODE11} - \eqref{eq:ODE12} satisfying Assumption~\ref{Ass:Cont} a \emph{momentum-based optimization algorithm} for the function $f$ if $x^*=0$ is an asymptotically stable equilibrium in the sense of Lyapunov. 

This means that $\varphi_t(0)=0$, for all $t\in I$, $\varphi_t$ is continuous at $0$, uniformly in $t$, and $\lim_{t\rightarrow \infty} \varphi_t(z_0)=0$ for any $z_0$ in a neighborhood of the origin.
\end{definition}
\hchangeII{Even though the dynamics $g_\text{q}$ and $g_\text{p}$ can capture gradient flow, as a special case (e.g., $g_\text{p}=0$, $g_\text{q}=-\nabla f(q)$ in continuous time), we are interested in analyzing momentum methods, which arise from a nontrivial choice of $g_\text{q}$ and $g_\text{p}$.} 

Throughout the article we will illustrate our ideas with two examples, which are prototypical versions of momentum-based optimization algorithms. 

\subsection{Example 1}\label{Sec:Ex1}
The first example is based on the following continuous-time dynamics
\begin{equation}
\dot{q}(t)=p(t), \qquad \dot{p}(t)=-\nabla f(q(t)) + f_\text{d}(q(t),p(t)), \qquad \forall t\in \mathbb{R}_{\geq 0}, \label{eq:exCT}
\end{equation}
where $\nabla f$ denotes the gradient of $f$ and $f_\text{d}$ the \hchange{dissipative forces}. These dynamics can be viewed as a mass-spring-damper system, where $f$ represents the spring potential and $f_\text{d}$ the damping. The \hchange{dissipative forces} are assumed to take the form
\begin{equation}
f_\text{d}(q,p):=-2d p-(\nabla f(q+\beta p)-\nabla f(q)), \label{eq:fnp}
\end{equation}
where the parameters $d>0, \beta \geq 0$ are constant. Ideally, the parameters $d$ and $\beta$ are designed to take into account additional information about the the function $f$; for example, upper and lower bounds on the curvature $\diff^2 f / \diff x^2$. If $\beta$ is chosen to be zero, the \hchange{dissipative forces} reduce to $-2d p$. In that case, the dynamics \eqref{eq:exCT} describe a continuous-time heavy ball method \citep{PolyakHeavyBall}. In case $\beta >0$, the dynamics are related to Nesterov's accelerated gradient method \citep{ourWork}.

An intuitive interpretation of the \hchange{dissipative forces} \eqref{eq:fnp} can be given in the following way: For $\beta=0$, \eqref{eq:fnp} describes linear isotropic damping. For $\beta >0$, \eqref{eq:fnp} can be rewritten as
\begin{equation}
-2d p -\int_{0}^{\beta} \left.\frac{\diff^2 f}{\diff x^2}\right|_{q + \tau p} \diff \tau~ p,
\end{equation}
which implies that \eqref{eq:fnp} includes an additional damping term that averages the local curvature in the interval between $q$ and $q + \beta p$. As a result, the damping increases if the local curvature is large, and reduces if the local curvature is small. As the velocity $p$ is larger, the interval over which the average is taken is increased. The two forms of damping, linear isotropic and curvature dependent, are balanced by the coefficients $d$ and $\beta$.

The equilibria of \eqref{eq:exCT} are given by the critical points of $f$. Moreover, if a given critical point is a non-degenerate local minimum, the corresponding equilibrium is asymptotically stable. This follows by evaluating the total energy, 
\begin{equation}\label{eq:energy}
H(q,p)=\frac{1}{2} |p|^2 + f(q),
\end{equation}
along the trajectories of \eqref{eq:exCT},
\begin{equation}
\frac{\diff}{\dt} H(q(t),p(t))=f_\text{d}(q(t),p(t))\T p(t) = -2d |p|^2 - p\T \int_{0}^{\beta} \left.\frac{\diff^2 f}{\diff x^2}\right|_{q + \tau p} \diff \tau~ p, \label{eq:energyDec}
\end{equation}
which shows that the energy necessarily decreases in a neighborhood of the equilibrium. Combined with the fact that $H$ is locally positive definite about a non-degenerate local minimum, this implies stability in the sense of Lyapunov. Asymptotic stability of the non-degenerate local minimum can then be concluded from La Salle's theorem \citep[see, for example,][Ch.~5.4]{Sastry}, which is based on examining the invariant sets satisfying $\diff H/\diff t=0$. Thus, the dynamical system \eqref{eq:exCT} is a momentum-based optimization algorithm for $f$ according to Definition~\ref{def:MomBasedOA}.

However, analyzing how the energy evolves along the trajectories of \eqref{eq:exCT} also reveals global properties of the dynamics. For $\beta=0,d>0$ it follows that the set of initial conditions that do not converge to a local minimum is a set of measure zero, given by the critical points of $f$ that are not local minima. The analysis holds without assuming convexity of $f$. However, if $f$ happens to be convex and has a unique global minimum, then the corresponding equilibrium is globally asymptotically stable for any choice of parameters $\beta\geq 0, d>0$. 

The same reasoning applies in case $f$ satisfies Assumption~\ref{Ass:funfund} instead of \ref{Ass:fun}, or when $f$ has degenerate local minima.

\subsection{Example 2}\label{Sec:Ex2}
The second example is obtained by discretizing \eqref{eq:exCT} in the following way:
\begin{equation}
q_{k+1}=q_k+T p_{k+1}, \qquad p_{k+1}=p_k - T \nabla f(q_k) + T f_\text{d}(q_k,p_k), \qquad \forall k\in \{0,1,\dots\},\label{eq:exDT}
\end{equation}
where $T>0$ is the step size. \hchange{The discretization consists of a forward Euler update of the momentum coordinates, and uses the newly computed momentum for the position update. As will be further discussed in the remainder of the article, the fact that the newly computed momentum coordinate is used for the position update makes the scheme symplectic for $f_\text{d}=0$. This means that the transformation \eqref{eq:exDT} from $(q_k,p_k) \rightarrow (q_{k+1},p_{k+1})$ preserves the symplectic form (for $f_\text{d}=0$), which has important consequences. One of these consequences concerns the spectrum of the linearization of \eqref{eq:exDT}. In case $f_\text{d}=0$, the corresponding eigenvalues are guaranteed to lie on the unit circle (for $T\leq 2/\sqrt{L}$). This is in sharp contrast to the standard explicit Euler discretization, which is not symplectic, and where the eigenvalues lie outside the unit circle even for arbitrarily small $T>0$. Indeed, we will exploit the fact that the map \eqref{eq:exDT} is symplectic (for $f_\text{d}=0$) to construct a modified energy function, which will be used for a stability analysis that extends beyond the linearization. 
Additional background information on symplectic integration can be found in \cite{Serna} or \cite{Hairer}, for example.} 

For $\beta=0$, the resulting algorithm is referred to as gradient descent with momentum \citep{PolyakHeavyBall}. For $\beta>0$, Nesterov's accelerated gradient scheme \citep[Constant step scheme III, p.~81]{NesterovIntro} is obtained by choosing the parameters as follows:
\begin{equation}\label{eq:NParams}
T=\frac{1}{\sqrt{L}}, \quad d=\frac{\sqrt{L}}{\sqrt{\kappa}+1}, \quad \beta=\frac{\sqrt{\kappa}-1}{\sqrt{\kappa}+1}\frac{1}{\sqrt{L}},
\end{equation}
where $\kappa$ and $L$ characterize the local shape of a local minimum, \citep{ourWork}. Moreover, as pointed out in \cite{ourWork}, the ``Constant step scheme II" algorithm of \citet{NesterovIntro} is obtained by a particular choice of time-varying coefficients $\beta$ and $d$. The generalization to time-varying coefficients will be discussed in Section~\ref{Sec:CharRateTV}.

The equilibria of \eqref{eq:exDT} are again the stationary points of $f$. In order to determine the stability of an isolated local minimum, we linearize the dynamics,
\begin{equation}
\delta q_{k+1}=\delta q_k + T \delta p_{k+1}, \qquad \delta p_{k+1}=\delta p_k - 2dT \delta p_k - \beta T H_\text{e} \delta p_k - T H_\text{e} \delta q_k,
\end{equation}
where $H_\text{e}:=\diff f/\diff x^2|_{x=0}$ (without loss of generality we consider $x^*=0$). An eigenvalue analysis then reveals that the corresponding non-degenerate local minimum is asymptotically stable if
\begin{equation}\label{eq:DTlinCond}
0< T(2d+\beta h) < 2- h T^2/2
\end{equation}
holds for all eigenvalues $h$ of $H_\text{e}$, i.e. $\mu \leq h \leq L$.\footnote{The condition \eqref{eq:DTlinCond} is necessary and sufficient for the the linearized dynamics to be asymptotically stable for all $h$, with $\mu \leq h \leq L$.} Asymptotic stability of the linearized dynamics implies that the same equilibrium is asymptotically stable under the nonlinear dynamics \eqref{eq:exDT} \citep[p.~215]{Sastry}.

For example, provided that the parameters $d$ and $\beta$ are chosen according to \eqref{eq:NParams}, we obtain that the given equilibrium is asymptotically stable if 
\begin{equation}
0< T < (\sqrt{5}-1)/\sqrt{L}.
\end{equation}
We thus conclude that \eqref{eq:exDT} is an optimization algorithm for $f$ according to Definition~\ref{def:MomBasedOA} provided that the constants $d$, $\beta$, and $T$ are chosen such that \eqref{eq:DTlinCond} is satisfied.

Unlike Example 1, our analysis for Example 2 is valid only in a neighborhood about the equilibrium, which corresponds to a local minimum. However, the specific structure of the discretization and the ideas from Example 1 can be exploited for obtaining a nonlinear analysis that is valid beyond a neighborhood of the equilibrium. This will be illustrated in Section~\ref{Sec:OptiEx2}.

\subsection{Characterizing the convergence rate}\label{Sec:CharTI}
Guaranteeing mere convergence is often not enough, as we are primarily interested in how quickly the trajectories of the nonlinear dynamics \eqref{eq:ODE11} and \eqref{eq:ODE12} converge to a local minimum. In the following, we argue that in most cases, a linear analysis characterizes the convergence rate up to constants. The linear analysis typically reduces to the computation of eigenvalues. 

Our main proposition is based on the following assumption and definition.
\begin{assumption}
Let the linearized dynamics
\begin{align*}
\delta z^+(t)=\left.\frac{\partial g}{\partial z}\right|_{z=0} \delta z(t), \quad \forall t\in I, \quad \delta z(t_0)=z_0,
\end{align*}
be such that there exists an estimate
\begin{equation*}
|\delta z(t)|\leq C_\text{l} |\delta z(t_0)| \exp(-\alpha (t-t_0)), \quad \forall z_0\in \mathbb{R}^{2n}, \quad \forall t_0, t \in I, t\geq t_0,
\end{equation*}
where $C_\text{l}\geq 1$ and $\alpha>0$ are constant.
\label{Ass:lin}
\end{assumption}

\begin{definition}
The region of attraction of the equilibrium at the origin (of the nonlinear dynamics \eqref{eq:ODE11}) is defined as the set 
\begin{equation*}
\mathcal{R}:=\{ z_0\in \mathbb{R}^{2n}~|~\lim_{t\rightarrow \infty} \varphi_t(z_0)=0\}.
\end{equation*}
\end{definition}


\begin{proposition}
\label{Prop:main}
Let Assumption~\ref{Ass:lin} be satisfied. Then, for any compact set $A\subset \mathcal{R}$ there exists a finite constant $\hat{C}\geq 1$ such that for all $z_0 \in A$
\begin{equation}\label{eq:boundtmp}
|\varphi_t(z_0)| \leq \hat{C} |z_0| \exp(-\alpha t), \quad \forall t\in I.
\end{equation}
\end{proposition}

\begin{proof}
If $A$ is empty the statement is trivial. We therefore assume that $A$ is non-empty. Lemma~\ref{Lem:AsymptConv} (see Appendix~\ref{App:PropMain}) implies that there exists an open ball $B_\delta$ of radius $\delta > 0$, centered at the origin, such that any trajectory starting in  $B_\delta$ converges with rate $\alpha$. We make the following claim.

\noindent\emph{Claim:} There exists a finite time $T_\text{m}>0$, $T_\text{m}\in I$ such that for all $z_0\in A$, $\varphi_{t}(z_0) \in B_\delta$, for all $t\in I$, $t\geq T_\text{m}$.

\noindent\emph{Proof of the claim:} The origin is a stable equilibrium. Hence there exists a constant $\epsilon>0$ such that all trajectories starting in $B_\epsilon$, the open ball of radius $\epsilon$ centered at the origin, remain in $B_\delta$ for all times. In addition, each $z_0\in A$ satisfies $\lim_{t\rightarrow\infty} \varphi_t(z_0) =0$, since $A\subset \mathcal{R}$. Thus, for each $z_0 \in A$ there exists a time $T(z_0)$ such that $\varphi_{T(z_0)}(z_0)<\epsilon/2$. The continuity assumptions on $g$ imply that $\varphi_{T(z_0)}$ is continuous, which can be verified by the Gr\"{o}nwall inequality. Therefore, for each $z_0\in A$ there exists an open ball $B(z_0)$ centered about $z_0$ such that for all $\bar{z}_0 \in B(z_0)$, $\varphi_{T(z_0)}(\bar{z}_0)<\epsilon$, which implies $\varphi_{t}(\bar{z}_0) \in B_\delta$ for all $t\geq T(z_0)$. The collection of all $B(z_0), z_0\in A$ is an open cover for $A$. Due to the fact that $A$ is compact, there exists a finite sub-cover (according to the Heine-Borel theorem), which we denote $B(\hat{z}_{i})$, $i=1,2,\dots, N$, where $N$ is finite. As a result, choosing $T_\text{m}:=\max_{i\in \{1,2,\dots, N\}} T(\hat{z}_{i})$ implies $\varphi_{t}(z_0) \in B_\delta$ for all $z_0\in A$, $t\geq T_\text{m}$, which proves the claim.

Moreover, the continuity of $\varphi_t$ for all $t\in I$, $0\leq t \leq T_\text{m}$, implies further that $\varphi_t(A)$ is bounded for any $t\in I$, $0\leq t \leq T_\text{m}$. Combined with Lemma~\ref{Lem:AsymptConv} (see Appendix~\ref{App:PropMain}), this yields the following bound
\begin{equation}
|\varphi_t(z_0)|\leq 
\begin{cases} C_\text{A} &0\leq t \leq T_\text{m}, t\in I,\\
\delta \tilde{C} \exp(-\alpha (t-T)) &t>T_\text{m}, t\in I,
\end{cases}\label{eq:prooftmp11111}
\end{equation}
for all $z_0\in A$, where $C_\text{A}\geq\delta$ and $\tilde{C}\geq 1$ are positive constants. We fix $z_0\in A$, consider the trajectory $z(t):=\varphi_t(z_0)$, $t\in I$, and apply the mean value theorem,
\begin{equation}
z^+(t)=\left(\left.\frac{\partial g}{\partial z}\right|_{z=0} + \left.\frac{\partial g}{\partial z}\right|_{z=\xi(t)} - \left. \frac{\partial g}{\partial z}\right|_{z=0} \right) z(t),
\end{equation}
where $\xi(t)$ lies between $z(t)$ and the origin. Due to the fact that the dynamics are assumed to have Lipschitz-continuous derivatives, we obtain the following bound:
\begin{equation}\label{eq:prooftmp1}
\left\vert \left.\frac{\partial g}{\partial z} \right|_{z=\xi(t)} - \left.\frac{\partial g}{\partial z}\right|_{z=0} \right\vert \leq \bar{C}_A |\xi(t)| \leq \bar{C}_A |z(t)|,
\end{equation}
where $\bar{C}_A$ denotes a Lipschitz constant of $\partial g/\partial z$ on $A$. According to \eqref{eq:prooftmp11111}, the trajectory $|z(t)|$ is integrable (in continuous time) and absolutely summable (in discrete time). We obtain, by virtue of Lemma~\ref{Lem:Gronwall} (see Appendix~\ref{App:PropMain}),
\begin{equation}
|z(t)|\leq C_\text{l} \exp(C_\text{l} \bar{C}_A C_\text{z}) |z_0| \exp(-\alpha t),  \quad \forall z_0 \in A, \quad \forall t\in I,
\end{equation}
where $C_\text{z}$ is constant. The constant $C_\text{z}$ is related to an upper bound on the integral (in continuous time) or the sum (in discrete time) of $|z(t)|$ over $t\in I$, which according to \eqref{eq:prooftmp11111}, is guaranteed to be finite.
\end{proof}

\noindent\emph{Remarks:}
\begin{itemize}
\item Proposition~\ref{Prop:main} characterizes the convergence rate and states that the number of iterations required to obtain an $\epsilon$-accuracy approaches $\log(1/\epsilon)/\alpha$ for small $\epsilon$. This does not provide a tight bound on the number of iterations required to achieve a certain accuracy, as the constant $\hat{C}$ might depend on $\alpha$ or grow rapidly with the size of $A$. Nevertheless, it enables a qualitative and quantitative discussion of the convergence rate $\alpha$. In particular Proposition~\ref{Prop:main} highlights the fact that the convergence rate $\alpha$ is determined by the local properties of the dynamics, which depend on the local shape of the objective function $f$.
\item The assumptions required for invoking Proposition~\ref{Prop:main} are often straightforward to verify, as Assumption~\ref{Ass:lin} hinges on an eigenvalue analysis of $\partial g/\partial z$ at $z=0$. Proposition~\ref{Prop:main} can also be generalized to the case where $f$ satisfies Assumption~\ref{Ass:funfund} instead of Assumption~\ref{Ass:fun}, as shown in Appendix~\ref{App:Smooth}.
\item \hchangeII{The proof highlights the following alternative statement of Proposition~\ref{Prop:main}: There exists a finite time $T_\text{m}>0$ after which all trajectories starting in $A$ are guaranteed to be within a small neighborhood of the origin. Within this neighborhood the convergence is exponential with rate $\alpha$.}
\item Given the smoothness properties of $g$, the assumption that the linearized dynamics converge with rate $\alpha >0$ (Assumption~\ref{Ass:lin}) is necessary and sufficient for the convergence of the nonlinear dynamics with rate $\alpha$, as can be shown with the arguments of Lemma~\ref{Lem:AsymptConv} in Appendix~\ref{App:PropMain}.
\end{itemize}

\subsection{Implications for optimization algorithms}
Proposition~\ref{Prop:main} enables a characterization of the convergence rate based on a linear analysis of the dynamics about an equilibrium. In the following, we will use Proposition~\ref{Prop:main} to discuss the convergence rate of the optimization algorithms given in Example 1 and Example 2. In particular, this provides conditions guaranteeing accelerated convergence.

\subsection{Example 1}\label{Sec:OptiEx1}
The total energy of the dynamical system is given by \eqref{eq:energy}, and according to \eqref{eq:energyDec}, energy is dissipated along trajectories. As we will show in the following, the energy function can therefore be used to characterize the region of attraction of the equilibrium $z^*=0$, whereby the topology of the level sets of $H$ and $f$ will play an important role. This will be discussed next.

By assumption, $f$ has a local non-degenerate minimum at $x^*=0$, which implies that $f^{-1}([0,c])$ contains a compact, connected component containing the origin for sufficiently small $c>0$. The set $f^{-1}([0,c])$ describes all values $x\in \mathbb{R}^n$ such that $0\leq f(x) \leq c$. Morse theory~\citep{MorseTheory} concludes that the topology of the set $f^{-1}([0,c])$ is determined by the critical points of $f$. Thus, the set $f^{-1}([0,c])$ includes a compact, connected component that contains the origin (and no other critical point), as long as $c<\hat{f}=f(\hat{x})$, where $\hat{x}$ is any other critical point. The situation is illustrated with an example in Figure~\ref{Fig:energy}.

\begin{figure}
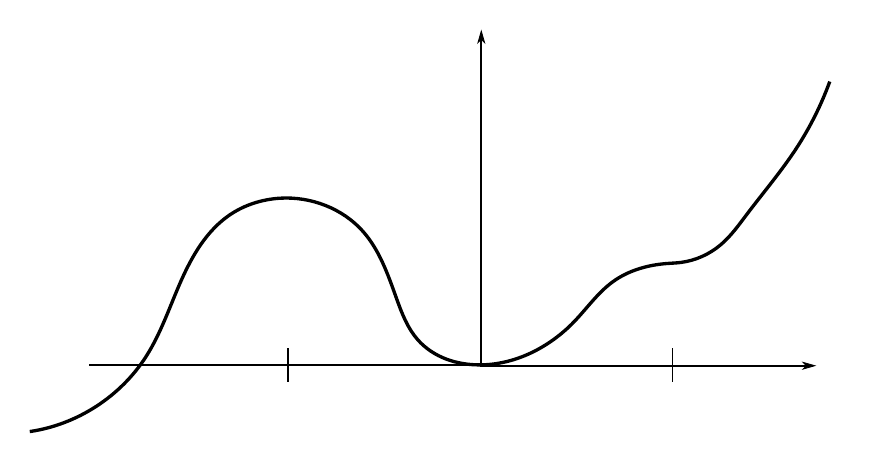
\caption{The figure illustrates the set $f^{-1}([0,c])$, with $c>0$, when the function $f$ is scalar. In this example, $f^{-1}([0,c])$ contains two connected components $C_1$ and $C_2$. The function $f$ has a local minimum at the origin, a saddle point at $\hat{x}$, and a local maximum at $x_\text{max}$. The origin is contained in the set $C_1$. Morse theory states that $C_1$ is guaranteed to be compact provided that $0<c< f(\hat{x})$. Due to the fact that $f$ is one-dimensional $C_1$ remains compact for $0<c<f(x_\text{max})$; this is, however, no longer true in higher dimensions.}
\label{Fig:energy}
\end{figure}

\hchange{From the definition of $H$ we infer that the critical points of $H$ are all of the form $(q^*,0)$, where $q^*$ corresponds to a critical point of $f$. The above reasoning therefore also applies to the total energy $H$, and concludes that the set $H^{-1}([0,c])$ includes a compact, connected component that contains the origin (and no other critical point), as long as $c<\hat{f}$.} This motivates the following definition, which will be used throughout the remainder of the article.
\begin{definition}\label{def:SetA}
The set $\mathcal{A}_f$ is defined as the connected component of $H^{-1}([0,\hat{f}))$ that contains the origin.
\end{definition}

Analyzing the rate of change of the energy along the trajectories of \eqref{eq:exCT} (cf.\ \eqref{eq:energyDec}),
\begin{equation}
\frac{\diff}{\diff t} H(q(t),p(t)) = - 2d |p(t)|^2 - p(t)\T \int_{0}^{\beta} \left. \frac{\diff^2 f}{\diff x^2}\right|_{q(t) + \tau p(t)} \diff \tau ~p(t),\label{eq:energyDec2}
\end{equation}
reveals that by a suitable choice of the parameters $d$ and $\beta$, the energy necessarily decays. 
In particular, this is the case for $d>0$, $0\leq \beta <2d/C_\text{f}$, where $C_\text{f}\geq 0$ denotes a lower bound on the Hessian of $f$, as defined in \eqref{eq:defCf}. We therefore conclude as follows.
\begin{proposition}
Provided that $d>0$ and $0\leq \beta \leq 2d/C_\text{f}$, the origin is an asymptotically stable equilibrium in the sense of Lyapunov. Its region of attraction contains the set $\mathcal{A}_f$.
\end{proposition}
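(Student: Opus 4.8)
The plan is to use the energy function $H(q,p) = \tfrac12|p|^2 + f(q)$ as a Lyapunov function and to combine the energy-dissipation identity \eqref{eq:energyDec2} with La Salle's invariance principle, exactly as was sketched for $\beta = 0$ in Section~\ref{Sec:Ex1}, but now tracking the role of $\beta$ carefully. First I would verify that under $d>0$ and $0 \leq \beta \leq 2d/C_\text{f}$ the right-hand side of \eqref{eq:energyDec2} is nonpositive: the curvature term is bounded below by $-C_\text{f}\beta|p|^2$ using the lower Hessian bound in \eqref{eq:defCf}, so $\tfrac{\diff}{\diff t}H \leq -(2d - \beta C_\text{f})|p|^2 \leq 0$. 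Thus $H$ is nonincreasing along trajectories everywhere on $\mathbb{R}^{2n}$, and in particular on $\mathcal{A}$. Second, since $x^*=0$ is a non-degenerate local minimum with $f(0)=0$, the function $H$ is locally positive definite about the origin, so Lyapunov's theorem gives stability in the sense of Lyapunov.

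Next I would address \emph{asymptotic} stability and the region-of-attraction claim together. Because $H$ is continuous and nonincreasing along trajectories, and $\mathcal{A}$ is by definition the connected component of the sublevel set $H^{-1}([0,\hat{f}))$ containing the origin, any trajectory starting in $\mathcal{A}$ stays in $\mathcal{A}$; the key point is that $\mathcal{A}$ is bounded (hence its closure is compact and positively invariant), which follows from the Morse-theoretic argument already given in the text — $\mathcal{A}$ is cut off before reaching any other critical point of $f$ and $|p|^2/2 < \hat f$ is bounded. With a compact positively invariant set in hand, La Salle's theorem \citep[Ch.~5.4]{Sastry} applies: every trajectory in $\mathcal{A}$ converges to the largest invariant set contained in $\{(q,p) : \tfrac{\diff}{\diff t}H = 0\}$. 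On that set $p \equiv 0$ (using $2d - \beta C_\text{f} > 0$, or handling the boundary case $\beta = 2d/C_\text{f}$ separately by noting the integral term still forces $p\equiv 0$ near the origin where the Hessian is positive definite), and invariance with $p\equiv 0$ together with $\dot p = -\nabla f(q)$ forces $\nabla f(q) = 0$; since $\mathcal{A}$ contains no critical point other than the origin, the trajectory converges to $0$. Hence $\mathcal{A} \subseteq \mathcal{R}$.

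The main obstacle I anticipate is the boundary case $\beta = 2d/C_\text{f}$ (when $C_\text{f} > 0$), where the dissipation bound degenerates to $\tfrac{\diff}{\diff t}H \leq 0$ with possible equality along nontrivial directions far from the origin. There the crude bound $-C_\text{f}\beta|p|^2$ is not strict enough to immediately conclude $p\equiv 0$ on the invariant set globally, and one must instead exploit that inside $\mathcal{A}$ the trajectory cannot persistently sit where the integrated curvature $\int_0^\beta (\diff^2 f/\diff x^2)|_{q+\tau p}\,\diff\tau$ attains its extreme negative value along $p$ — or, more simply, argue that $\liminf$ of $|p|$ along the trajectory is zero and then bootstrap using continuity of $H$ and the absence of other critical points. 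A secondary, more routine, subtlety is confirming that $\mathcal{A}$ is genuinely bounded: this rests on the Morse-theoretic statement already invoked in the text for $f^{-1}([0,c])$, lifted to $H$ via the product structure $H(q,p) = \tfrac12|p|^2 + f(q)$, so I would only need to remark that the compact connected component of $f^{-1}([0,\hat f))$ containing $0$, crossed with a bounded ball in $p$, contains $\mathcal{A}$.
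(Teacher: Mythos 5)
Your argument is correct and follows essentially the same route as the paper's own proof: energy dissipation via \eqref{eq:energyDec2}, confinement to the compact connected component of the sublevel set of $H$ (justified by the Morse-theoretic discussion), and a La Salle-type invariance argument yielding $p\rightarrow 0$ and convergence of $q$ to the unique critical point contained in $\mathcal{A}$. Your attention to the boundary case $\beta = 2d/C_\text{f}$ is in fact slightly more careful than the paper's proof, which simply asserts that $H$ strictly decreases whenever $p\neq 0$.
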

\begin{proof}
We consider any initial condition $z(0)\in \mathcal{A}_f$ and define $H_0:=H(z(0))$. For $d>0$ and $0\leq \beta \leq 2d/C_\text{f}$, the energy necessarily decays along the trajectories of \eqref{eq:exCT}, c.f. \eqref{eq:energyDec2}. The trajectory $z(t)=(q(t),p(t))$ is therefore confined to the connected component of $H^{-1}([0,H_0])$ that contains the origin, which, according to the above discussion, is necessarily compact. This implies that the origin is stable. In addition, the energy strictly decreases except when $p(t)=0$. Hence, \hchange{according to La Salle's theorem, \citep[see, for example,][Ch.~5.4]{Sastry}}, $q(t)$ necessarily converges to a critical point of $f$, whereas $p(t)$ converges to zero. The origin is the only critical point contained in $\mathcal{A}_f$, which implies asymptotic stability of the origin.
\end{proof}

An eigenvalue analysis of the linearized dynamics (about the equilibrium) reveals that the eigenvalues are given by 
\begin{equation}
-d-\frac{\beta h}{2} \pm \sqrt{\left(d+\frac{\beta h}{2}\right)^2-h}, \label{eq:eigTI}
\end{equation}
where $h$ is any eigenvalue of $\diff^2 f/\diff x^2|_{x=0}$. Thus, Proposition~\ref{Prop:main} asserts that the convergence rate for all initial conditions in $\mathcal{A}_f$ is directly determined by the real part of the eigenvalues, provided that $d>0$ and $0\leq \beta < 2d/C_f$.

The eigenvalues $h$ satisfy the upper and lower bounds $1/\kappa \leq h/L \leq 1$, cf.\ \eqref{eq:defmul}. An appropriate normalization of the constants $d$ and $\beta$ reduces the analysis to the case $L=1$, which we consider in the following. Figure~\ref{Fig:EigenvalueChange2} shows how the eigenvalues vary as a function of $d, \beta$ and $h$, according to the formula \eqref{eq:eigTI}.

\begin{figure}
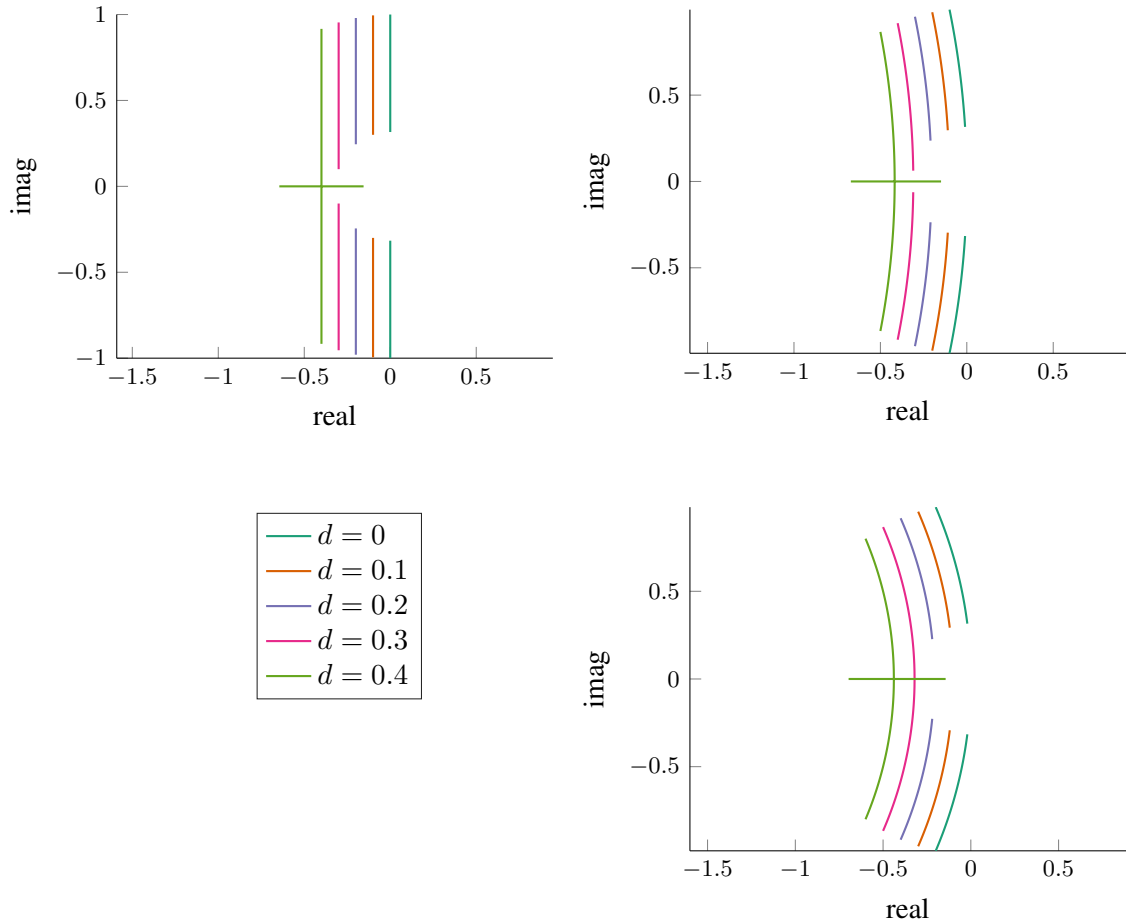

\newlength{\figurewidth}
\newlength{\figureheight}
\setlength{\figurewidth}{.4\columnwidth}
\setlength{\figureheight}{.3\columnwidth}
\begin{minipage}{.5\columnwidth}
\vspace{-3.2cm}
\input{media/PolesB0CT.tikz}
\end{minipage}%
\begin{minipage}{.5\columnwidth}
\input{media/PolesB02CT.tikz}\\ \vspace{0.5cm}\\
\input{media/PolesB04CT.tikz}
\end{minipage}
\caption{This figure shows how the eigenvalues of the linearization change as a function of the damping parameters $d$ and $\beta$, and as a function of the curvature at the equilibrium. The top left plot shows the behavior of the eigenvalues for $\beta=0$, as the curvature $h$ is varied from $0$ to $1$ (the different colors represent the different values of $d$). The top right plot shows the behavior of the eigenvalues for $\beta=0.2$ and the bottom right plot the behavior for $\beta=0.4$, where the curvature $h$ is varied from $0$ to $1$. The figure indicates that for $\beta=0$, the eigenvalues are real for large values of $d$ and small values of $h$. As $h$ is increased the eigenvalues may become complex conjugated, in which case a further increase in $h$ affects only the imaginary part. The additional parameter $\beta$ has the effect of reducing the real part for larger values of $h$. This increases the convergence rate for large values of $h$.}
\label{Fig:EigenvalueChange2}
\end{figure}

We consider first the case $\beta=0$: For small $d$---i.e., $0<d\leq 1/\sqrt{\kappa}$---the eigenvalues are complex conjugates, have real part $-d$ (independent of $h$), and their imaginary part varies between $\sqrt{1/\kappa-d^2}$ and $\sqrt{1-d^2}$ (as $h$ changes). As $d$ is increased above $1/\sqrt{\kappa}$ the eigenvalues can be both real or complex conjugates, depending on $h$. However, their worst-case real part is given by $-d+\sqrt{d^2-1/\kappa}$, which rapidly increases for larger $d$. The worst-case convergence rate (for a fixed $d$) is given by the maximum real part as $h$ is varied between $1/\kappa$ and $1$. Thus, in that sense, the optimal worst-case convergence rate is $1/\sqrt{\kappa}$ for $d=1/\sqrt{\kappa}$.

Increasing $\beta$ has the effect of increasing the convergence rate for larger values of $h$, since the parameter $\beta$ introduces additional damping as $h$ becomes large. Provided that $\beta\leq 1$, the qualitative behavior of the eigenvalues remains the same: For small $d$, i.e., $0<d \leq 1/\sqrt{\kappa} - \beta/(2\kappa)$, the eigenvalues are complex conjugates with worst-case convergence rate $-d-\beta/(2\kappa)$. As $d$ is increased above $1/\sqrt{\kappa} + \beta/(2\kappa)$, the eigenvalues can be both real or complex conjugated, depending on $h$. Their worst-case real part is again achieved for $h=1/\kappa$ and is rapidly increasing for larger $d$. Again, the optimal worst-case convergence rate is $1/\sqrt{\kappa}$ obtained for $d=-\beta/(2\kappa) + 1/\sqrt{\kappa}$.

In continuous time, any desired convergence rate can be realized, by a simple reparametrization of time. A linear reparameterization, $\hat{t}=c_\text{t} t$, where $c_\text{t}>0$ is constant, will simply scale the real parts and imaginary parts of the eigenvalues with $c_\text{t}$. A general, nonlinear, but diffeomorphic transformation will lead to time-varying dynamics, which will be discussed in Section~\ref{Sec:TV}. Thus, it might seem that any discussion of continuous-time convergence rates in the context of optimization is pointless. However, the analysis above tells us something different; it reveals how the convergence rate is affected by the condition number, which characterizes the shape of a local minimum. The analysis should be interpreted in the following way: provided that the time scale is fixed such that a convergence rate of $1$ (or $1 s^{-1}$ if the physical units are kept) is achieved for $\kappa=1$, the analysis reveals how the convergence rate of any optimization algorithm of the type \eqref{eq:exCT} deteriorates as $\kappa$ increases.

For the following analysis we introduce the notation $u_1 \succ u_2$ if the function $u_1$ dominates the function $u_2$ for large arguments; that is, $\lim_{\kappa \rightarrow \infty} u_1(\kappa)/u_2(\kappa) = \infty$, where $u_1$ and $u_2$ are real-valued functions that are positive for large arguments. In the same way, the notation $u_1 \prec u_2$ implies that the function $u_2$ dominates $u_1$ for large arguments. We further use $u_1 \sim u_2$ to imply that neither $u_1$ nor $u_2$ are dominant for large arguments; that is,
\begin{equation}
\lim_{\kappa \rightarrow \infty} \frac{u_1(\kappa)}{u_2(\kappa)} < \infty, \quad \lim_{\kappa\rightarrow \infty} \frac{u_2(\kappa)}{u_1(\kappa)} < \infty.
\end{equation}

\hchangeII{We will analyze the performance of the algorithm given in Example 1 not only on a single function, but on a whole class of functions. In order to make the following statements precise, we will fix a compact set $A\subset \mathbb{R}^{2n}$ that contains the origin and introduce the following class of functions (parametrized by the constants $\bar{\kappa}\geq 1, C_\text{f}>0, \bar{C}_\text{f}>0$).
\begin{definition}
Let $\mathcal{F}_{\bar{\kappa},C_\text{f},\bar{C}_\text{f}}$ denote the set of all functions such that each element $f\in \mathcal{F}_{\bar{\kappa},C_\text{f},\bar{C}_\text{f}}$ satisfies the following conditions: 1) $f$ has an isolated local minimum at the origin with condition number $\kappa\leq \bar{\kappa}$, 2) $f$ satisfies Assumption~\ref{Ass:fun} and the bounds in \eqref{eq:defCf}, and 3) $\mathcal{A}_f \supset A$, where $\mathcal{A}_f$ is defined according to Definition~\ref{def:SetA}. 
\end{definition} 
These conditions are motivated as follows. The first prescribes the local geometry about the local minimum at the origin, which influences the convergence rate in significant ways. The second ensures smoothness of the gradient, and the third guarantees that there are no critical points in $A$ other than the origin.\footnote{\hchangeII{It is important to exclude additional critical points from the set $A$, since these are equilibria (a gradient-based algorithm cannot distinguish between saddle points, maxima, and minima). Clearly, a bound such as \eqref{eq:boundtmp} cannot be satisfied for all $z_0\in A$ if $A$ contains equilibria other than the origin.}}}

Acceleration is obtained whenever the convergence rate (again, relative to the convergence rate achieved for $\kappa=1$) scales with $1/\sqrt{\kappa}$ for large values of $\kappa$. \hchangeII{More precisely:
\begin{definition}\label{def:Acc}
A momentum-based algorithm is accelerated if there exists constants $\kappa_0\geq 1$ and $c_\text{a}>0$, such that for any $\kappa \geq \kappa_0$ and $f\in \mathcal{F}_{\kappa,C_\text{f},\bar{C}_\text{f}}$, the following bound holds (for some constant $C_\text{a}>0$):
\begin{equation*}
    |\varphi_t(z_0)|\leq C_\text{a} |z_0| \exp(-c_\text{a} t/\sqrt{\kappa}), \quad \forall t\in I, \forall z_0 \in A.
\end{equation*}
\end{definition}
}
\hchangeII{We will now proceed to derive conditions on the parameters $d$ and $\beta$ of the algorithm given in Example 1 that guarantee accelerated convergence. We start with the case $\beta=0$.} For $d\succ 1/\sqrt{\kappa}$ it follows that the real part of the worst-case convergence rate scales with $-d+\sqrt{d^2-1/\kappa}\approx -1/(2 d \kappa)$, which makes acceleration impossible. For $d\prec 1/\sqrt{\kappa}$ the damping is too small; i.e., the real part of \eqref{eq:eigTI} scales worse than $1/\sqrt{\kappa}$. Hence, acceleration is only achieved for $d \sim 1/\sqrt{\kappa}$.
A similar argument applies to the case $0<\beta$ and yields $d+\beta/(2\kappa) \sim 1/\sqrt{\kappa}$.

The above analysis is summarized with the following proposition.
\begin{proposition}\label{Prop:AccConv}
In the nonconvex case ($C_\text{f}>0$), the algorithm given in Example 1 is accelerated for the set of parameters $d\sim 1/\sqrt{\kappa}$, $0<d$, and $0\leq \beta < 2d/C_\text{f}$.

In the convex case ($C_\text{f}=0$), the algorithm given in Example 1 is accelerated for the set of parameters $d+\beta/(2\kappa)\sim 1/\sqrt{\kappa}$, $0<d$, $0\leq \beta$.
\end{proposition}

\hchangeII{We would like to emphasize that the bound in Definition~\ref{def:Acc} is checked for each function $f\in \mathcal{F}_{\kappa,C_\text{f},\bar{C}_\text{f}}$ individually, whereby the constant $C_a$ may depend on the specific function $f$. Given the (potentially large) compact set $A$, Proposition \ref{Prop:AccConv} therefore answers the following question: What are the algorithm parameters ensuring that for any $f\in \mathcal{F}_{\kappa,C_\text{f},\bar{C}_\text{f}}$ the time for reaching an $\epsilon$ accuracy scales with $\sqrt{\kappa} \text{ln}(1/\epsilon)$ up to constants, i.e., for small $\epsilon$?} \hchange{One important aspect of the analysis is that the convergence rate depends only on the local geometry of the objective function, as characterized by the constant $\kappa$.}

\hchange{The basic notion of local analysis, obtained from a derivation of eigenvalues, is well known (see, for example, \cite{polyakBook}). But Proposition~\ref{Prop:AccConv} goes beyond classical local analysis in that, by virtue of Proposition~\ref{Prop:main}, the local rate can be \hchangeII{guaranteed for} a large portion of the region of attraction of a given equilibrium. \hchangeII{Key for this result is a nonlinear and global stability analysis.} In the strongly convex setting, the results regarding the case $\beta=0$ have been derived in \cite{sd}. Similar results regarding the case $\beta \neq 0$ can be found in \cite{Attouch}.}

\hchange{The bounds that we establish throughout the manuscript are restricted to initial conditions that are contained in a compact set within the region of attraction of a non-degenerate and isolated local minimum. This can be further motivated by considering the function shown in Figure \ref{Fig:energy}. Provided that the algorithm dissipates energy, we infer from the above discussion that the region of attraction of the origin (an open set) will comprise points in the state space that are arbitrarily close to the saddle $(\hat{x},0)$ and the maximum $(x_\text{max},0)$ (see Figure \ref{Fig:energy}). Saddle points or maxima are equilibria, since any gradient-based algorithm cannot distinguish between them and minima. Hence, when initializing the algorithm close enough to $(\hat{x},0)$ and $(x_\text{max},0)$, convergence of the algorithm can potentially take an arbitrarily long time (see \cite{SimonDu} for a formal analysis). In that sense, the restriction of initial conditions to a compact set within the region of attraction of a given equilibrium appears to be natural.}

\subsection{Example 2}\label{Sec:OptiEx2}
In discrete time, the stability analysis is not as straightforward, since the energy $H$ is in general not dissipated along trajectories. However, the specific structure of the discretization can be exploited for obtaining a nonlinear stability analysis that is valid beyond a neighborhood of a local minimum.

As remarked in \cite{ourWork}, the discretization \eqref{eq:exDT} can be divided into two parts, an energy dissipation step and a symplectic Euler step \citep[p.~3]{Hairer},
\begin{align}
\underbracket{\bar{z}_k = \Phi_{d,T}(z_k) = \left( \begin{array}{c} q_k \\ p_k - T f_\text{d}(q_k,p_k) \end{array} \right)}_{\text{energy dissipation}}, \quad \underbracket{z_{k+1} = \Phi_T(\bar{z}_k)= \left( \begin{array}{c} \bar{q}_k + T p_{k+1}\\
\bar{p}_k - T \nabla f(\bar{q}_k) \end{array} \right)}_{\text{symplectic Euler}}, \label{eq:sympEuler}
\end{align}
with intermediate state $\bar{z}_k=(\bar{q}_k,\bar{p}_k)$. In order to simplify notation, we introduce the maps $\Phi_{d,T}$ and $\Phi_T$ to denote the energy dissipation and the symplectic Euler step, respectively. The symplectic Euler step is a well-known structure-preserving first-order integration scheme. Due to the symplectic integration, there exists a modified energy function that is nearly (up to exponentially small terms) conserved by the symplectic Euler step \citep[Chapter~VI]{Hairer}. The modified energy function can be computed by means of truncated Taylor-series expansions. In order to make the analysis rigorous, $f$ is assumed to be analytic, which enables the estimation of higher-order derivatives using Cauchy's integral formula. It will be shown that for the subsequent stability analysis the assumption of $f$ being analytic poses essentially no restriction, as any continuous function can be approximated arbitrarily closely by an analytic function on a compact domain \citep[Stone-Weierstrass Theorem;][p.~159]{RudinAnalysis}. The modified energy function is characterized by the following result.

\begin{proposition}\label{Prop:ModEnergyShort}
Let $f$ be analytic on $B_r^\text{c}$, the closed $n$-dimensional ball of radius $r$ centered at the origin, and let $L_\text{H}$ be a Lipschitz constant of $\nabla H$ on  $B_r^\text{c}\times B_r^\text{c}$. Then there exists a perturbed Hamiltonian $\tilde{H}: B_r^\text{c}\times B_r^\text{c} \rightarrow \mathbb{R}$ such that
\begin{equation}
|\tilde{H}(z_0)-\tilde{H}(\Phi_T(z_0))|\leq T C_{\Delta \tilde{H}} |\nabla H(z_0)|^2 e^{-T_0/T}, \label{eq:boundH3Short}
\end{equation}
for all $0<T\leq T_0/3$ and for all $|z_0|\leq \frac{r}{2} (1+3.63 L_\text{H} T (1+ e T_0/3))^{-1}$, where 
\begin{equation*}
C_{\Delta \tilde{H}} := e(2.9+0.1 T_0) (1+e T_0/3), \quad T_0:=\frac{2\text{ln}(2)-1}{2eL_\text{H}}
\end{equation*}
are constant. The perturbed Hamiltonian has the form
\begin{equation*}
\tilde{H}(q,p)=H(q,p)-\frac{T}{2} \nabla f(q)\T p + T^2 F(q,p),
\end{equation*}
where $F:B_r^\text{c}\times B_r^\text{c} \rightarrow \mathbb{R}$ is an analytic function. The perturbed Hamiltonian $\tilde{H}$ has the same critical points as $H$ and satisfies
\begin{align}
|H(z)-\tilde{H}(z)|&\leq 15 L_\text{H} T |H(z)|, \quad \forall z\in \mathcal{A}_f \cap (B_{r/2}^\text{c}\times B_{r/2}^\text{c}), \nonumber\\
|\nabla F(z)| &\leq 356 L_\text{H}^2 |\nabla H(z)|, \quad \forall z\in B_{r/2}^\text{c}\times B_{r/2}^\text{c}. \label{eq:BoundTildeH2Short}
\end{align}
\end{proposition}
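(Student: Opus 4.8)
The plan is to follow the standard backward error analysis of symplectic integrators (see \citet[Ch.~IX]{Hairer}), but carried out quantitatively so as to produce the explicit constants. Setting aside the energy--dissipation factor, the symplectic Euler step $\Phi_T$ of \eqref{eq:sympEuler} is a splitting method: it is the composition of the exact time-$T$ flows of the two ``half'' Hamiltonians $H^{(q)}(q,p)=f(q)$ and $H^{(p)}(q,p)=\tfrac12|p|^2$, in the order fixed by \eqref{eq:sympEuler}. Since $\Phi_T$ is symplectic, the Baker--Campbell--Hausdorff formula produces a formal series $\tilde H=H+TH_2+T^2H_3+\cdots$ whose exact time-$T$ flow matches $\Phi_T$ order by order, and each coefficient $H_j$ is a finite linear combination of iterated Poisson brackets of $H^{(q)}$ and $H^{(p)}$. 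A short computation gives $H_2=\pm\tfrac12\{H^{(p)},H^{(q)}\}=\mp\tfrac12\,\nabla f(q)\T p$, with the sign fixed by the splitting order to $-\tfrac12\,\nabla f(q)\T p$; collecting the remaining coefficients into $F$ yields the claimed form $\tilde H=H-\tfrac{T}{2}\nabla f(q)\T p+T^2F(q,p)$.

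First I would set up Cauchy estimates. Because $f$ is analytic on $B_r^\text{c}$, both $f$ and $\nabla H=(\nabla f(q),p)$ extend holomorphically to a complex neighborhood whose radius is controlled by the Lipschitz constant $L_\text{H}$ of $\nabla H$, and on that neighborhood $\nabla H$ varies by at most a fixed multiple of $|\nabla H(z_0)|$. Cauchy's integral formula then bounds every higher derivative of $H^{(q)}$ and $H^{(p)}$ geometrically, with ratio $\sim L_\text{H}$. Feeding these bounds into the recursion for the coefficients $H_j$ gives, on the ball of radius $r/2$, an estimate of the shape $|H_j(z)|\le\text{(const)}^{\,j-1}(j-1)!\,L_\text{H}^{\,j-2}|\nabla H(z)|^2$; the factor $|\nabla H(z)|^2$ is forced because every $H_j$ with $j\ge 2$ is an iterated Poisson bracket, hence a polynomial in the partial derivatives of the two halves in which each monomial contains at least two first-order factors, and the first derivatives of the halves are exactly the components of $\nabla H$. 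This is the step that produces the explicit numbers $3.63$, $356$, $15$ and $T_0=(2\ln 2-1)/(2eL_\text{H})$, so it must be tracked carefully. Next I would truncate the series at the optimal index $N\approx T_0/(eT)$ and estimate the one-step defect $|\tilde H(z_0)-\tilde H(\Phi_T(z_0))|$: it equals the first discarded tail term of the BCH series, and by the factorial growth above, Stirling's bound converts $N!\,(L_\text{H}T)^N$ into $e^{-T_0/T}$, leaving the prefactor $T\,C_{\Delta\tilde H}|\nabla H(z_0)|^2$ of \eqref{eq:boundH3Short}; the smallness condition on $|z_0|$ is exactly the one-step Gr\"{o}nwall/contraction bound (using $L_\text{H}T$ small) that keeps the $\Phi_T$-iterates inside $B_{r/2}^\text{c}\times B_{r/2}^\text{c}$, where the estimates are valid.

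Finally I would verify the remaining assertions, all of which come from the Poisson-bracket structure of the $H_j$. Differentiating an iterated bracket by the Leibniz rule, every resulting term still carries at least one first-order derivative of a half, so $|\nabla H_j(z)|$ is bounded by $|\nabla H(z)|$ times higher derivatives of $H$ (controlled by the Cauchy estimates); in particular $\nabla H_j$ vanishes wherever $\nabla H=0$, which gives both that $\tilde H$ and $H$ have the same critical points and the bound $|\nabla F(z)|\le 356\,L_\text{H}^2|\nabla H(z)|$. The bound $|H(z)-\tilde H(z)|\le 15\,L_\text{H}T|H(z)|$ on $\mathcal{A}\cap(B_{r/2}^\text{c}\times B_{r/2}^\text{c})$ follows from the $j=2$ estimate $|H_2|\le\tfrac14|\nabla H|^2$ plus the geometric tail of the higher $H_j$, combined with the elementary inequality $|\nabla H(z)|^2\le 2L_\text{H}H(z)$ valid on all of $\mathcal{A}$: by the descent lemma for the $L_\text{H}$-smooth function $f$ on the component $\{0\le f<\hat f\}$ containing the origin one has $|\nabla f(q)|^2\le 2L_\text{H}f(q)$, and adding $|p|^2\le 2\cdot\tfrac12|p|^2$ gives the claim (taking $L_\text{H}\ge 1$ without loss). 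The main obstacle is the middle paragraph: propagating the Cauchy estimates through the BCH recursion with constants sharp enough to land exactly on $T_0$ and on the stated prefactors, while simultaneously controlling the domain shrinkage so that the one-step map never leaves $B_{r/2}^\text{c}\times B_{r/2}^\text{c}$.
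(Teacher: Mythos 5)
Your BCH/splitting outline is a recognizable cousin of the paper's argument (the paper instead matches the Taylor expansion of a modified vector field $\dot{\tilde z}=\sum_j f_j(\tilde z)T^{j-1}$ against the one-step map, proves the factorization $f_j=A_jf_1$ with $f_1=\Omega\nabla H$, bounds the $A_j$ via Cauchy estimates and a generating function solving $2b-e^b+1=b_1\zeta$, and gets the exponentially small defect from the maximum modulus principle), but as a proof of \emph{this} proposition it has a genuine gap: the quantitative core is missing, and it is exactly the part the statement is about. The proposition asserts specific constants --- $T_0=(2\ln 2-1)/(2eL_\text{H})$, the factors $15$, $356$, $3.63$, and $C_{\Delta\tilde H}$ --- and $T_0$ enters both the admissible range $T\le T_0/3$ and the exponent $e^{-T_0/T}$, so it cannot be absorbed into a generic ``const''. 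In the paper these numbers come out of the particular recursion for the $A_j$ and the radius of convergence of the associated generating function; your BCH coefficients obey different combinatorics, and you give no reason the factorial-type bound you sketch (``$|H_j|\le C^{j-1}(j-1)!\,L_\text{H}^{j-2}|\nabla H|^2$'') lands on these values rather than on a smaller $T_0$ or larger prefactors, which would fail to cover the stated range of $T$ and $z_0$. You yourself flag this middle paragraph as the main obstacle; without it the proposition is not proved.

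There is also a concrete incorrect step in your derivation of $|H(z)-\tilde H(z)|\le 15L_\text{H}T|H(z)|$ on $\mathcal{A}\cap(B_{r/2}^\text{c}\times B_{r/2}^\text{c})$: you invoke $|\nabla f(q)|^2\le 2L_\text{H}f(q)$ ``by the descent lemma''. The descent lemma only gives $|\nabla f(q)|^2\le 2L\bigl(f(q)-f(q-\tfrac1L\nabla f(q))\bigr)$, and under the paper's standing assumptions $f$ is nonconvex and may be negative outside the local basin, so there is no guarantee that the displaced point (or the global infimum) has nonnegative value; the inequality you need is a Polyak--{\L}ojasiewicz-type bound that is simply not available here. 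The paper avoids it entirely: it integrates the pointwise estimate $|\nabla\tilde H-\nabla H|\le 15L_\text{H}T|\nabla H|$ along the gradient flow of $H$ from $z$ to the origin, where $\int_0^1\nabla H(\gamma)\T\dot\gamma\,\diff t=H(z)$ and the integrand is $|\nabla H||\dot\gamma|$, which yields the multiplicative bound in $H(z)$ with no curvature-to-value inequality. (A smaller point: from ``$\nabla H_j$ vanishes where $\nabla H=0$'' you only get $\mathrm{crit}(H)\subseteq\mathrm{crit}(\tilde H)$; the reverse inclusion needs the quantitative bound $15L_\text{H}T<1$, which you have but should say.) Your structural observation that every iterated bracket carries at least two first-derivative factors, i.e.\ the bracket-side analogue of the paper's $f_j=A_jf_1$, is sound and is indeed what forces the $|\nabla H(z_0)|^2$ prefactor; the missing pieces are the explicit constant-tracking and the repaired argument for the bound on $\mathcal{A}$.
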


\noindent\emph{Remarks:}
\begin{itemize}
\item A more general version of Proposition~\ref{Prop:ModEnergyShort} is stated and proved in Appendix~\ref{App:ProofTildeH}.
\item The proof follows the reasoning of \citet[p.~307]{Hairer}, which enables the construction of $\tilde{H}$ by means of a truncated series expansion. Cauchy's integral formula is used to assert the convergence of the truncated series. The result from \citet[p.~307]{Hairer} is extended by exploiting the specific structure of the underlying dynamics, which leads to the additional statements about the modified energy function $\tilde{H}$. These are crucial in the context of stability analysis.
\item The constants $T_0$ and $C_{\Delta \tilde{H}}$ are determined as a function of the Lipschitz constant of $\nabla H$, which can be regarded as the natural time constant for the dynamics governed by the Hamiltonian $H$. For $L_\text{H}=1$ we obtain the following values: $T_0\approx 0.071$, $C_{\Delta \tilde{H}}\approx 8.4$,
\begin{equation}
|\tilde{H}(z_0)-\tilde{H}(\Phi_T(z_0))|\leq 8.4 T |\nabla H(z_0)|^2 e^{-0.071/T}, \label{eq:boundPert}
\end{equation}
for all $0<T\leq 0.023$ and all $z_0$ such that $|z_0|\leq 0.45 r$. Choosing, for example, a time step $T=0.001$ leads to the bound $1.2\cdot 10^{-33}$ on the right-hand side of expression \eqref{eq:boundPert}, indicating that $\tilde{H}$ is virtually exactly conserved by the symplectic Euler scheme.
\item The estimate for the maximum time step $T_0/3$ is typically conservative. However, the proposition rigorously establishes that for a small enough time step, the perturbed Hamiltonian will be almost exactly conserved. The importance lies in the fact that the upper bound on the time step is only dependent on the Lipschitz constant of $\nabla H$, which is directly related to the Lipschitz constant of $\nabla f$. Due to the fact that Proposition~\ref{Prop:ModEnergyShort} is a statement about the integration of the conservative part of the dynamics, the maximum time step $T_0/3$ is independent of the parameters $d$, $\beta$, and $\kappa$.
\item The bounds \eqref{eq:BoundTildeH2Short} enable a nonlinear stability analysis, based on the modified Hamiltonian $\tilde{H}$. Due to the fact that $15 L_\text{H} T < 1$ for all $T\leq T_0/3$, $\tilde{H}$ is necessarily positive in a neighborhood of the origin, cf.\ \eqref{eq:BoundTildeH2Short}. Combined with the fact that the perturbed Hamiltonian has the same critical points as $H$, this concludes that the level sets of $\tilde{H}$ are compact in a region about the origin. The size of this region is determined by the critical points of $H$, as argued in the analysis of Example 1; see Section~\ref{Sec:OptiEx1}.
\end{itemize}

In the following, we will use the modified energy function $\tilde{H}$, whose existence is ensured by Proposition~\ref{Prop:ModEnergyShort}, for analyzing the stability of the dynamics \eqref{eq:exDT} in the large. As pointed out, the dynamics \eqref{eq:exDT} can be subdivided into a dissipation step and a symplectic Euler step.

In order to simplify the presentation we focus on the case where $T$ is small and the map $\Phi_{d,T}$ is close to the identity (little damping). For stability analysis this is the most challenging setting, since, due to the almost vanishing damping, the convergence can be arbitrarily slow (the equilibrium is almost non-attractive). In case $\Phi_{d,T}$ is not close to the identity, which is, for example, obtained for a constant parameter $\beta>0$, independent of $\kappa$, stability can be analyzed by means of the unperturbed energy function $H$, as shown in Appendix~\ref{App:DisStabilityConvex}. We will thus concentrate on the following result.

\begin{proposition}\label{Prop:StabilityDT}
Let the \hchange{dissipative forces} $f_\text{d}(q_k,p_k)$ be such that $-d_2 |p_k|^2 \leq p_k\T f_\text{d}(q_k,p_k)\leq -d_1 |p_k|^2$ with $0 < d_1 \leq d_2$.

Then, there exists a maximum time step $T_\text{max}>0$, such that the origin is an asymptotically stable equilibrium of the dynamics \eqref{eq:exDT} for all $T\leq T_{\text{max}}$, with domain of attraction at least $\mathcal{A}_f\cap A$, where $A$ is any compact set. Up to exponentially small terms due to \eqref{eq:boundH3Short}, the maximum time step $T_\text{max}$ depends on an upper bound on $d_2$, $d_1/d_2$, and $L_\text{H}$, the Lipschitz constant of $\nabla H$.
\end{proposition}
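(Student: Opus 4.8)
The plan is to use the modified Hamiltonian $\tilde H$ from Proposition~\ref{Prop:ModEnergyShort} as a Lyapunov function for the full discrete map $z_{k+1}=\Phi_T(\bar z_k)=\Phi_T(\Phi_{d,T}(z_k))$, and to show that over one composite step $\tilde H$ decreases on the relevant sublevel set. First I would fix an analytic $f$ (invoking Stone--Weierstrass as in the discussion preceding the proposition, so that the stability conclusion for the actual $f$ follows by approximation on the compact set $A$), and choose $r$ large enough that $\mathcal{A}\cap A$ together with a neighbourhood of it is contained in $B_{r/2}^\text{c}\times B_{r/2}^\text{c}$; this is where the "domain of attraction at least $\mathcal{A}\cap A$" claim gets its meaning. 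The decrease estimate splits into two contributions. For the symplectic Euler substep $\Phi_T$, Proposition~\ref{Prop:ModEnergyShort} gives $\tilde H(\Phi_T(\bar z))-\tilde H(\bar z)\le T\,C_{\Delta\tilde H}\,|\nabla H(\bar z)|^2 e^{-T_0/T}$, i.e.\ an essentially negligible increase controlled by $|\nabla H|^2$. For the dissipation substep $\Phi_{d,T}$, I would Taylor-expand $\tilde H$ along $\bar z = (q,\,p - T f_\text{NP}(q,p))$: the first-order term is $-T\,\nabla_p\tilde H(z)\T f_\text{NP}(q,p)$, and since $\nabla_p\tilde H = p - \tfrac{T}{2}\nabla f(q) + T^2\nabla_p F$, the leading piece is $-T\,p\T f_\text{NP}(q,p)\ge T d_1 |p|^2$ by hypothesis — wait, sign: we want $\tilde H$ to \emph{decrease}, so this term is $+T\,p\T f_\text{NP}\le -T d_1|p|^2 < 0$, good. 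The remaining terms ($-\tfrac{T^2}{2}\nabla f(q)\T f_\text{NP}$, the $T^3$ term from $\nabla_p F$, and the second-order Taylor remainder, all bounded using $|f_\text{NP}|\le d_2|p|$, the bound $|\nabla F|\le 356 L_\text{H}^2|\nabla H|$, and $L_\text{H}$-Lipschitzness of $\nabla H$) are $O(T^2 d_2^2 |p|^2 + T^2 d_2 |p|\,|\nabla f(q)|)$.

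The key bookkeeping step is to show that all the "bad" terms (the $O(T^2)$ pieces from the dissipation step and the exponentially small piece from the symplectic step) are dominated by the good term $-T d_1|p|^2$ once $T$ is small enough. The subtlety is the cross term involving $|\nabla f(q)|$: when $p$ is tiny but $q$ is not near a critical point, $-T d_1|p|^2$ cannot absorb a term proportional to $|p|\,|\nabla f(q)|$. This is exactly why $\tilde H$, not a crude energy, is needed: the correction $-\tfrac{T}{2}\nabla f(q)\T p$ in $\tilde H$ is what makes the first-variation of $\tilde H$ under the \emph{symplectic} flow vanish to high order, so that along the combined map the $\nabla f(q)$-dependence only enters multiplied by extra factors of $T$. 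Concretely I would group the composite one-step change as $\tilde H(z_{k+1})-\tilde H(z_k) \le -T d_1 |p_k|^2 + C T^2\big(d_2^2|p_k|^2 + |\nabla H(z_k)|^2\big) + \text{(exp.\ small)}$, using $|\nabla f(q)|\le |\nabla H(z)|$ and $L_\text{H}$-Lipschitz bounds to control everything; then choose $T_\text{max}$ so that $C T_\text{max}(1+d_2^2) \le d_1/2$, say, giving $\tilde H(z_{k+1})-\tilde H(z_k)\le -\tfrac{T}{2}d_1|p_k|^2 + C' T^2 |\nabla_q H(z_k)|^2$...

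Actually the honest statement is that the $|\nabla_q H|^2 = |\nabla f(q)|^2$ term does \emph{not} vanish and must itself be absorbed, which forces $T_\text{max}$ to depend on more than $d_1,d_2,L_\text{H}$ unless one is careful. The resolution — and I expect this to be the main obstacle — is to track the $\nabla f(q)$ terms through $\Phi_{d,T}$ \emph{together with} the symplectic step rather than bounding the substeps separately: over one symplectic Euler step the position changes by $Tp$, so after the dissipation step the relevant gradient is evaluated at a point $O(T|p|)$-close to $q_k$, and the would-be obstructive term reorganizes (using $\nabla^2 f$ Lipschitz / bounded, Assumption~\ref{Ass:fun}) into something of the form $O(T^2 d_2^2 |p|^2) + O(T^3\cdot\ldots)$, i.e.\ genuinely higher order in $T$ times $|p|^2$. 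Granting this, I finish in the standard LaSalle manner: $\tilde H$ is positive definite near the origin and has compact sublevel sets inside $\mathcal{A}$ (Proposition~\ref{Prop:ModEnergyShort} and the Morse-theoretic argument of Section~\ref{Sec:OptiEx1}); $\tilde H(z_{k+1})\le \tilde H(z_k)$ on the sublevel set $\{\tilde H \le H_0\}\cap\mathcal{A}$ for $z_0\in\mathcal{A}\cap A$ with $H_0=\tilde H(z_0)$, so trajectories stay in a compact set and $\sum_k T|p_k|^2<\infty$, forcing $p_k\to 0$; the only invariant set with $p\equiv 0$ in $\mathcal{A}$ is the origin (the dynamics on $p=0$ reduce to $q_{k+1}=q_k$ only at critical points of $f$), giving asymptotic stability with the stated domain of attraction; finally $T_\text{max}$, by construction, depends only on an upper bound for $d_2$, on $d_1/d_2$, and on $L_\text{H}$, up to the exponentially small correction from \eqref{eq:boundH3Short}.
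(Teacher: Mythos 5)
There is a genuine gap at precisely the point you flag and then set aside with ``Granting this''. Your Lyapunov function is $\tilde H$ alone, and the change of $\tilde H$ under the dissipation substep contains the indefinite term $-\tfrac{T^2}{2}\nabla \tilde f(q)\T f_\text{NP}(q,p)$, of size $T^2 d_2\,|p|\,|\nabla f(q)|$. This cannot be absorbed by the good term $-T d_1|p|^2$ when $|p|$ is small compared with $T|\nabla f(q)|$, and the proposed rescue --- tracking the term through the adjacent symplectic step so that it ``reorganizes into higher order in $T$ times $|p|^2$'' --- does not work as stated: after one symplectic Euler step starting near $p=0$ one has $|p|\sim T|\nabla f(q)|$, so the bad term and the good term are of the \emph{same} order $T^3|\nabla f(q)|^2$, with relative size governed by $d_2/d_1$ and an indefinite sign (your proposition allows general $f_\text{NP}$ and nonconvex $f$), so no choice of $T_\text{max}$ depending only on $d_2$, $d_1/d_2$, $L_\text{H}$ closes the estimate. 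The exponentially small error from \eqref{eq:boundH3Short} causes the same trouble: it is proportional to the full $|\nabla H(z_k)|^2$, including $|\nabla f(q_k)|^2$, so at points with $p_k\approx 0$ and $\nabla f(q_k)$ large your one-step change of $\tilde H$ is (slightly) positive. Monotonicity of $\tilde H$ therefore fails, and with it the summability argument $\sum_k T|p_k|^2<\infty$ and the LaSalle endgame.

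The paper resolves exactly this by \emph{not} using $\tilde H$ alone: the Lyapunov function is $V(q,p)=\tilde H(q,p)+\tfrac{Td_1}{2}\nabla \tilde f(q)\T p$ (after a Stone--Weierstrass replacement of $f$ by an analytic $\tilde f$, as you also do), and the composite step is analyzed in the order $\Phi_{d,T}\circ\Phi_T$ for convenience. Differencing the cross term $\tfrac{Td_1}{2}\nabla\tilde f(q)\T p$ along the symplectic Euler substep produces, via $p_1=p_0-T\nabla\tilde f(q_0)$ and Lipschitz continuity of $\nabla\tilde f$, a negative contribution $-\tfrac{T^2 d_1}{2}|\nabla\tilde f(q_0)|^2$ up to higher-order terms; after a Young-inequality bookkeeping this yields a one-step decrease of the form $-\tfrac{d_1T^2}{8}|\nabla H(z_0)|^2$ plus a term $|\nabla H(z_0)|^2\,\mathcal{O}(Te^{-T_0/T})$, i.e.\ both the decrease and the symplectic-integration error are proportional to $|\nabla H(z_0)|^2$ and can be compared pointwise, giving strict decrease away from the origin for all $T\le T_\text{max}$ with $T_\text{max}$ depending only on an upper bound for $d_2$, on $d_1/d_2$, and on $L_\text{H}$ (up to the exponentially small terms). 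Your plan needs this augmentation of the Lyapunov function (or an equivalent device supplying decrease in the $\nabla f(q)$ directions); with $\tilde H$ alone the key absorption step you acknowledge as the main obstacle remains unproved and, as argued above, is false in the generality of the proposition.
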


\noindent\emph{Remarks:} 
\begin{itemize}
\item The proof of Proposition~\ref{Prop:StabilityDT}, which is included in Appendix~\ref{App:ProofDTStab}, is based on the Lyapunov function
\begin{equation*}
V(q,p)=\tilde{H}(q,p) + \frac{T d_1}{2} \nabla f (q)\T p,
\end{equation*}
where $\tilde{H}$ denotes the perturbed energy function introduced in Proposition~\ref{Prop:ModEnergyShort}. The function $V$ is motivated by the following observation. As the damping parameter $d_1$ decreases and $\Phi_{d,T}$ approaches the identity, $V$ reduces to the perturbed energy function $\tilde{H}$, which (up to exponentially small terms) is known to be conserved by $\Phi_{T}$. The correction $T d_1 p\T \nabla f(q)/2$ is required due to the fact that $\Phi_{d,T}$ is a pure contraction in the momentum variable, and as a result $\tilde{H}$ is not necessarily decreasing through the application of $\Phi_{d,T}$. \hchange{A similar correction has been previously used in the literature on dissipative systems; see, for example, \cite{hale} or \cite{Haraux}, and in the context of the heavy-ball method in \cite{sd}.} 
\item Proposition~\ref{Prop:StabilityDT} ensures that the region of attraction is the same as the continuous-time counterpart, and that the requirements on the time step for guaranteeing asymptotic stability is independent of the minimum damping $d_1$ (up to the exponentially small terms). This will be important in the following, where we will analyze how the convergence rate scales with $\kappa$.
\item Provided that $\beta>0$ and the function $f$ is convex, the stability analysis simplifies substantially. In particular, a sufficient condition for stability is obtained by analyzing the total energy $H$ along the trajectories of \eqref{eq:exDT}, as shown in Appendix~\ref{App:DisStabilityConvex}.
\end{itemize}

%

Next we will analyze the specific implications for the dynamics \eqref{eq:exDT}. We recall that the contraction step $\Phi_{d,T}$ is given by 
\begin{equation*}
p_{k+1}=p_{k} + T f_{\text{d}}(q_k,p_k) = (1-2dT) p_k - T \int_{0}^{\beta} \left.\frac{\diff^2 f}{\diff x^2}\right|_{q_k+\tau p_k} \diff \tau ~p_k,
\end{equation*}
which concludes that the upper and lower bounds $d_2$ and $d_1$ of Proposition~\ref{Prop:StabilityDT} are given by $d_2=2d+ \bar{C}_\text{f} \beta$ and $d_1=2d-C_\text{f} \beta$ (assuming $\beta\geq 0$). The upper and lower bounds $d_2$ and $d_1$ are therefore functions of $\kappa$.

Hence, according to Proposition~\ref{Prop:StabilityDT}, provided that $d_1>0$ and $d_2/d_1$ and $d_2$ are bounded with respect to $\kappa$, there exists a time step $T$, \emph{independent} of $\kappa$, such that the origin is asymptotically stable, where the region of attraction includes any compact subset of $\mathcal{A}_f$. Then, according to Proposition~\ref{Prop:main}, the convergence rate of any trajectory starting in $A\subset \mathcal{A}_f$, $A$ compact, is determined by the magnitude of the eigenvalues of the linearized dynamics. The eigenvalues are given by
\begin{equation*}
\lambda_{1,2}=1- T \left( d + \frac{\beta h}{2} + \frac{Th}{2} \pm \sqrt{(d+\frac{\beta h}{2} + \frac{Th}{2})^2 -h}\right),
\end{equation*}
enabling the calculation of the convergence rate for a given choice of $T$, $d$, $\beta$ and $h$.

For the following discussion, we again assume that $d$ and $\beta$ are normalized such that $1/\kappa \leq h \leq 1$. We fix $d$, $\beta$, and $T$ and analyze how the eigenvalues change as a function of $h$, as done in Figure~\ref{Fig:EigenvalueChange}. The worst-case convergence rate is determined by the maximum magnitude of the eigenvalues (over $1/\kappa \leq h \leq 1$). For very small values of $h$, the eigenvalues are real, and one eigenvalue is very close to $1$. As $h$ is increased, the eigenvalues become complex conjugates, where for $\beta=0$, the eigenvalues are located along circles centered at the origin, whereas for $\beta>0$ their magnitude slightly decreases. If $h$ is increased further, the eigenvalues become real again and their magnitude increases. The worst-case convergence rate, i.e., the largest magnitude of $|\lambda_{1,2}|$ is therefore either achieved for $h=1/\kappa$ or $h=1$.

We are interested in determining the conditions on $d$ and $\beta$ such that the convergence rate scales with $1/\sqrt{\kappa}$. We consider first the case $h=1/\kappa$ and $\beta=0$, and assume that $d$ is large enough such that the eigenvalues are real. Then, provided that $d\sim 1/\sqrt{\kappa}$ it follows that $\lambda_{1,2} \sim 1-T/\sqrt{\kappa}$. However, if $d$ is chosen to be larger, i.e., $d \succ 1/\sqrt{\kappa}$, it follows that for large $\kappa$, 
\begin{equation}
|\lambda_{1,2}|\approx 1-T d  + T d \sqrt{1 - \frac{1/\kappa}{d^2}}\approx 1-\frac{T}{2d\kappa},
\end{equation}
that is, the convergence rate scales worse than $1/\sqrt{\kappa}$. In case $d$ is small enough, such that the eigenvalues are complex conjugates, their magnitude is given by
\begin{equation}
|\lambda_{1,2}|=\sqrt{1-2dT},\label{eq:form1}
\end{equation}
which indicates that a scaling of the convergence rate with $1/\sqrt{\kappa}$ is only achieved for $d \sim 1/\sqrt{\kappa}$.

For $h=1$ and $\beta=0$ it follows from $d\sim 1/\sqrt{\kappa}$ that $\lambda_{1,2}$ approach
\begin{equation}
\lambda_{1,2} \rightarrow 1-\frac{T^2}{2} \mp T \sqrt{\frac{T^2}{4} -1},
\end{equation}
for large $\kappa$. Thus, for $T\leq 1$, for example, the eigenvalues are complex conjugates and the worst-case convergence rate scales with $1/\sqrt{\kappa}$. The condition $d\sim 1/\sqrt{\kappa}$ is therefore necessary and sufficient for ensuring that the worst-case magnitude of $|\lambda_{1,2}|$ scales with $1/\sqrt{\kappa}$. A similar analysis applies to the case where $\beta > 0$, as shown below.


As in the discussion of Section~\ref{Sec:OptiEx1} we say that the optimization algorithm \eqref{eq:exDT} is accelerated if the convergence rate scales with $1/\sqrt{\kappa}$ \hchangeII{(see Definition~\ref{def:Acc})}, where exponentially small terms resulting from the application of Proposition~\ref{Prop:ModEnergyShort} are neglected. As a consequence, the above discussion allows us to translate the results from Proposition~\ref{Prop:AccConv} almost verbatim to the discrete-time setting. This results in a broad characterization of the parameters $d$ and $\beta$ leading to acceleration.

\begin{proposition}\label{Prop:AccDis}
In the nonconvex case, ($C_f>0$), there exists a time step $T>0$, such that the algorithm given in Example 2 is accelerated provided that $d\sim 1/\sqrt{\kappa}$, $0<d$, $0\leq \beta<2d/C_f$.

In the convex case, ($C_f=0$), there exists a time step $T>0$, such that the algorithm given in Example 2 is accelerated provided that $d+ \beta/(2\kappa) \sim 1/\sqrt{\kappa}$, $0<d$, $0\leq \beta$ and either $\beta\sim 1$, $\beta \sim 1/\sqrt{\kappa}$, or $\beta \prec 1/\sqrt{\kappa}$.
\end{proposition}
\begin{proof}
The requirements on $\beta$ are needed for guaranteeing asymptotic stability of the origin for a time step $T$ that is small enough, but independent of $\kappa$, as implied by Proposition~\ref{Prop:StabilityDT} and Appendix~\ref{App:DisStabilityConvex}. It therefore remains to analyze the behavior of the eigenvalues $\lambda_{1,2}$ as a function of $h$ for $1/\kappa \leq h \leq 1$.

For small $h$ the eigenvalues are real. As $h$ is increased, they may become complex conjugates, and if $h$ is increased further they become real again. In case the eigenvalues are complex conjugates, their magnitude is given by
\begin{equation}
|\lambda_{1,2}|=\sqrt{1-2dT-\beta hT}.\label{eq:form11}
\end{equation}

We consider first the case $h=1/\kappa$: The eigenvalues are real provided that $d+\beta/(2\kappa) + T/(2\kappa) \geq 1/\sqrt{\kappa}$, which, given the assumptions on $\beta$, implies that $d \geq 1/\sqrt{\kappa}$ for large $\kappa$. In case $d \sim 1/\sqrt{\kappa}$ it follows that $\lambda_{1,2}\sim 1-T/\sqrt{\kappa}$, which therefore yields accelerated convergence. In case $d\succ 1/\sqrt{\kappa}$ it follows that
\begin{equation}
(d+\frac{\beta}{2\kappa} + \frac{T}{2\kappa}) \sqrt{1 - \frac{1/\kappa}{(d+\frac{\beta}{2\kappa} + \frac{T}{2\kappa})^2}}\approx (d+\frac{\beta}{2\kappa} + \frac{T}{2\kappa})-\frac{1}{2d\kappa + \beta + T},
\end{equation}
due to the fact that $1/(d^2 \kappa)\rightarrow 0$ for large $\kappa$. Thus, accelerated convergence is impossible for $d \succ 1/\sqrt{\kappa}$. 
In case the eigenvalues are complex conjugates for $h=1/\kappa$, it follows from \eqref{eq:form11} that $d\sim 1/\sqrt{\kappa}$ is necessary for accelerated convergence.

We therefore proceed to the case $h=1$, where due to the fact that $d$ vanishes for large $\kappa$, the two eigenvalues approach constant values,
\begin{equation}
\lambda_{1,2} \rightarrow  1- T \left(\frac{\beta^\infty}{2} + \frac{T}{2} \pm \sqrt{(\frac{\beta^\infty}{2} + \frac{T}{2})^2 - 1}\right),
\end{equation}
for $\beta^\infty:=\lim_{\kappa\rightarrow \infty} \beta$, which is either constant or zero. This results in a convergence rate that is independent of $\kappa$, since $T$ is chosen small enough to guarantee convergence.
\end{proof}

In particular, this leads to the conclusion that, for example, the following heavy-ball scheme,
\begin{equation}
q_{k+1}=q_k + T p_{k+1}\qquad 
p_{k+1}=p_k - (2T/\sqrt{\kappa}) p_k - T \nabla f(q_k)\label{eq:HB}
\end{equation}
leads to accelerated convergence for a sufficiently small value of $T$. In case the Lipschitz constant of $\nabla f$ is bounded by one, the bound $T\leq 0.001$ for guaranteeing acceleration is obtained by following the proof of Proposition~\ref{Prop:StabilityDT}.\footnote{Strictly speaking, due to the exponential terms in \eqref{eq:boundH3Short}, the bound $T\leq 0.001$ is valid for $\kappa \leq 4.1\cdot  10^{52}$.} Note that compared to Nesterov's original scheme, a gradient evaluation at $q_k +\beta p_k$ is not required. 

\hchange{We would like to emphasize that our analysis captures the computational complexity up to constants. For a given objective function, choosing $\beta \neq 0$ typically allows larger steps $T$, which is likely to reduce the number of iterations needed for convergence. On strongly convex functions, for example, the ``Constant step scheme III'' presented in \citet[p. 81]{NesterovIntro}, seems to provide a good compromise between a straightforward implementation and fast convergence (considering constants).}

\hchange{Local results akin to Proposition \ref{Prop:AccDis} which are based on a quadratic objective function are well known. A summary can be found for example in \cite{LessardRecht}. In addition to unifying the continuous-time and discrete-time analysis, an important aspect of Proposition \ref{Prop:AccDis} is to establish a rate that is valid for a large portion of the region of attraction of a given equilibrium. Furthermore, the fact that the heavy-ball scheme \label{eq:HB} leads to accelerated convergence for a sufficiently small value of $T$ appears to be new. If $T$ is chosen to be too large, however, for example by tuning $T$ on quadratic functions, this may lead to convergence issues even on strongly convex functions, as has been reported in \cite{LessardRecht}.}

\begin{figure}
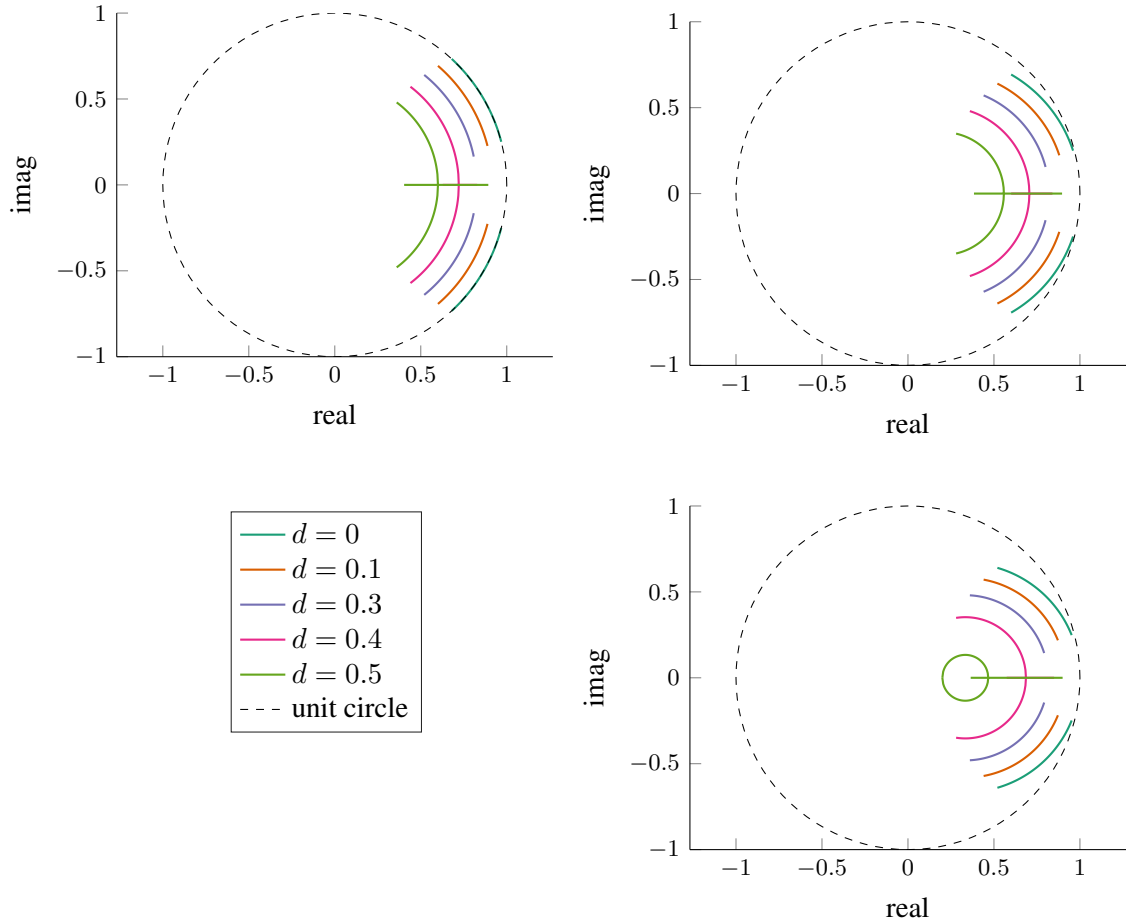

\setlength{\figurewidth}{.4\columnwidth}
\setlength{\figureheight}{.3\columnwidth}
\begin{minipage}{.5\columnwidth}
\vspace{-2.7cm}
\input{media/PolesB0.tikz}
\end{minipage}%
\begin{minipage}{.5\columnwidth}
\input{media/PolesB02.tikz}\\ \vspace{0.1cm}\\
\input{media/PolesB04.tikz}
\end{minipage}
\caption{This figure shows how the eigenvalues of the linearization change as a function of the damping parameters $d$ and $\beta$, and as a function of the curvature at the equilibrium, $h$. The top left plot shows the behavior of the eigenvalues for $\beta=0$, as the curvature $h$ is varied from $0.1$ to $1$ (the different colors represent the different values of $d$). The top right plot shows the behavior of the eigenvalues for $\beta=0.2$ and the bottom right plot the behavior for $\beta=0.4$, as the curvature $h$ is varied from $0.1$ to $1$. For all the plots the time step is set to $T=0.8$. The figure indicates that for $\beta=0$, the eigenvalues move first along the real axis and then along concentric circles (as $h$ changes). The additional parameter $\beta$ has the effect of reducing the magnitude of the eigenvalues for larger values of $h$. While this can increase the convergence rate, it bears also the risk of instability, when $d$, $\beta$, and $T$ are chosen to be too large.}
\label{Fig:EigenvalueChange}

\end{figure}



\section{The Time-Varying Case}\label{Sec:TV}
Next we will consider the case where the dynamics $g$ are time dependent. The main motivation for introducing time-dependent dynamics lies in improving the convergence rate for functions that have an almost vanishing curvature at a local minimum.

We slightly abuse notation and reuse the variables introduced in Section~\ref{Sec:CharTI}. It will be clear from context whether we refer to the time-varying or time-invariant case.

Following Section~\ref{Sec:TI}, we model a momentum-based optimization algorithm as a continuous-time or discrete-time dynamical system of the form
\begin{gather}
q^+(t)=g_\text{q}(t,q(t),p(t)), \quad 
p^+(t)=g_\text{p}(t,q(t),p(t)), \quad \forall t\in I, t\geq t_0, \label{eq:mombased2TV}\\
q(t_0)=q_0, \quad p(t_0)=p_0, \quad t_0\in I, \label{eq:mombased2TVi}
\end{gather}
where the dynamics satisfy the following assumptions:\footnote{The assumptions can be relaxed to mere Lipschitz continuity of the second and third argument as discussed in Appendix~\ref{App:Smooth}. }
\begin{assumption}
The dynamics $g_\text{q}$ and $g_\text{p}$ are continuously differentiable in all their arguments. The derivative in the second and third arguments is Lipschitz continuous, uniformly in $t$.
%
\end{assumption}

We define the map from $(q_0,p_0,t_0) \rightarrow (q(t),p(t))$ as $\varphi_t: \mathbb{R}^{2n} \times I \rightarrow \mathbb{R}^{2n}$, for $t\in I$, $t\geq t_0$, and define the dynamical system \eqref{eq:mombased2TV}-\eqref{eq:mombased2TVi} to be a \emph{momentum-based optimization algorithm} for the function $f$ if $x^*=0$ is an asymptotically stable equilibrium in the sense of Lyapunov (uniformly in the initial time $t_0$). Due to the continuity assumptions on the dynamics $g_\text{q}$ and $g_\text{p}$, the continuous-time existence and uniqueness results, as well as the continuity of trajectories with respect to their initial conditions continues to hold \citep[p.~93, Corollaries~3~and~4]{DifferentialEquationsArnold}.

The examples presented in Section~\ref{Sec:Ex1} and Section~\ref{Sec:Ex2} are modified by allowing the constants $d>0$ and $\beta\geq 0$ to be time varying. We restrict ourselves to the case where $d$ and $\beta$ converge for large time, which leads to asymptotically time-invariant dynamics. This has the advantage that the asymptotic convergence rate is independent of the initial time $t_0$, and, under mild continuity conditions, the (optimal) convergence rates from Section~\ref{Sec:CharTI} will be recovered for large $t$. Thus, \eqref{eq:exCT} and \eqref{eq:exDT} are extended by replacing the constants $d$ and $\beta$ with the functions $d: \mathbb{R}_{\geq 0} \rightarrow \mathbb{R}_{>0}$ and $\beta: \mathbb{R}_{\geq 0} \rightarrow \mathbb{R}_{\geq0}$, which are assumed to be continuously differentiable and convergent,
\begin{equation}
d_\infty:=\lim_{t\rightarrow \infty} d(t)>0, \quad \beta_\infty:=\lim_{t\rightarrow \infty} \beta(t)\geq 0.
\end{equation}

The stability analysis of Example~1 (Section~\ref{Sec:Ex1}) translates to the time-varying case.\footnote{La Salle's invariance principle must be extended \citep[as discussed in, e.g.,][p.~205]{Sastry}.} This implies that the algorithm \eqref{eq:exCT} with time-varying parameters $d$ and $\beta$ is a momentum-based optimization algorithm according to the above definition. 

The stability analysis of Example~2 is extended to the time-varying case by rewriting the dynamics \eqref{eq:mombased2TV} in the following way:
\begin{equation}
z_{t+1}=\left(\underbrace{\left.\frac{\partial g}{\partial z}\right|_{t\rightarrow\infty,0}}_{\text{linear time-invariant part}} + \underbrace{\left( \left.\frac{\partial g}{\partial z}\right|_{t,0} - \left.\frac{\partial g}{\partial z}\right|_{t\rightarrow\infty,0} \right)}_{\text{linear time-variant part}} \right) z_t + \underbrace{r_\text{NL}(t,z_t)}_{\text{nonlinear part}}, \label{eq:dynamicSysDTtmp}
\end{equation}
where the linear time-varying part vanishes for $t\rightarrow \infty$. The remainder $r_\text{NL}$ is of second order in $z_t$ (uniformly in $t\in I$). Thus, if the linear time-invariant part has all eigenvalues strictly within the unit circle, the dynamics \eqref{eq:dynamicSysDTtmp} are asymptotically stable, uniformly in $t_0$.\footnote{The linear part then converges at an exponential rate, as can be shown by Lemma~\ref{Lem:GronwallTV} (see Appendix~\ref{App:PropMainTV}). An argument similar to \citet[Thm.~1']{Bellman} can be applied to argue that the nonlinear dynamics are uniformly asymptotically stable in a neighborhood of the origin.} Consequently, the analysis from Section~\ref{Sec:Ex2} yields the following conditions, cf.\ \eqref{eq:DTlinCond}:
\begin{equation}
0< T(2d_\infty+\beta_\infty h) \leq 2- h T^2/2, \quad d_\infty>0, \quad \beta_\infty\geq 0,
\end{equation}
for all eigenvalues $h$ of $H_\text{e}=\left.\diff^2 f/\diff x^2\right|_{x=0}$.

\subsection{Characterizing the convergence rate}\label{Sec:CharRateTV}
As in Section~\ref{Sec:CharTI} we argue that a linear analysis can be used to characterize the convergence rate of the nonlinear dynamics up to constants. The results are derived only for non-degenerate isolated local minima. \hchange{We believe, however, that the discussion also provides insights into the case where the curvature vanishes at a local minimum; see Section~\ref{Sec:OptiEx1TV} for further discussion of this point.}

\hchange{A similar analysis of the role of choosing a time-varying damping parameter can be found in \cite{Attouch}. The emphasis of the following section lies in providing intuition through a unified treatment between the degenerate and non-degenerate case.}

We make the following assumption.
\begin{assumption}\label{Ass:linTV}
Let the linearized dynamics
\begin{equation*}
\delta z^+(t)= \left.\frac{\partial g}{\partial z}\right|_{t,z=0} \delta z(t), \quad \forall t\in I, \quad t\geq \tau, \quad \delta z(\tau)=z_0,
\end{equation*}
be such that there is an estimate
\begin{equation*}
|\delta z(t)| \leq C_\text{l} |\delta z(\tau)| \frac{\rho(t)}{\rho(\tau)} \exp(-\alpha (t-\tau)), \quad \forall z_0\in \mathbb{R}^{2n}, \quad \forall \tau,t\in I, t\geq \tau,
\end{equation*}
where $C_\text{l}\geq 1$ and $\alpha>0$ are constant, and $\rho:\mathbb{R}_{\geq 0} \rightarrow \mathbb{R}_{\geq 0}$ is continuous, and monotonically decreasing.
\end{assumption}

\begin{proposition}\label{Prop:mainTV}
Let Assumption~\ref{Ass:linTV} be satisfied and let the region of attraction of the equilibrium at the origin of the nonlinear dynamics \eqref{eq:mombased2TV} be denoted by $\mathcal{R}\subset \mathbb{R}^{2n}$. Then, for any compact set $A \subset \mathcal{R}$ and any initial time $t_0\in I$, there exists a finite constant $\hat{C}\geq 1$ such that for all $z_0\in A$,
\begin{equation*}
|\varphi_t(z_0,t_0)|\leq \hat{C} |z_0| \frac{\rho(t)}{\rho(t_0)} \exp(-\alpha (t-t_0)), \quad \forall t\in I, \quad t\geq t_0.
\end{equation*}
\end{proposition}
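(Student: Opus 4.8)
The plan is to mirror the proof of Proposition~\ref{Prop:main} as closely as possible, adapting each step to track the extra factor $\rho(t)/\rho(t_0)$ and the dependence on the initial time $t_0$. First I would establish a time-varying analogue of Lemma~\ref{Lem:AsymptConv}: using the smoothness of $g$ in $z$ (uniformly in $t$) together with Assumption~\ref{Ass:linTV}, one shows there is a ball $B_\delta$ around the origin such that any trajectory entering $B_\delta$ at \emph{any} time $\tau$ stays in $B_\delta$ and satisfies $|\varphi_t(z_0,\tau)| \leq \tilde{C} |z_0| (\rho(t)/\rho(\tau)) \exp(-\alpha(t-\tau))$ for $t\geq\tau$. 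The key point here is that $\rho$ is monotonically decreasing, so $\rho(t)/\rho(\tau)\leq 1$ for $t\geq\tau$; hence the extra factor never hurts the contraction estimates and the usual Gr\"onwall/fixed-point argument for local exponential attractivity of the nonlinear system goes through with the linear bound replaced by its $\rho$-weighted version. Uniformity in the entry time is exactly what Assumption~\ref{Ass:linTV} was stated to give, since $C_\text{l}$ and $\alpha$ do not depend on $\tau$.

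Next I would prove the analogue of the Claim in Proposition~\ref{Prop:main}: for the fixed compact set $A\subset\mathcal{R}$ and fixed initial time $t_0$, there is a finite time $T\geq t_0$, $T\in I$, such that $\varphi_t(z_0,t_0)\in B_\delta$ for all $z_0\in A$ and all $t\geq T$. This uses (i) uniform (in $t$) stability of the origin to get an $\epsilon$-ball that funnels into $B_\delta$, (ii) the defining property of $\mathcal{R}$ that each $z_0\in A$ converges to the origin, so $\varphi_{T(z_0)}(z_0,t_0)<\epsilon/2$ for some finite $T(z_0)$, (iii) continuity of $\varphi_{T(z_0)}(\cdot,t_0)$ via the Gr\"onwall inequality (which in the time-varying setting is Lemma~\ref{Lem:GronwallTV}), giving an open ball $B(z_0)$ on which $\varphi_{T(z_0)}(\cdot,t_0)<\epsilon$, and (iv) compactness of $A$ to extract a finite subcover and take $T=\max_i T(\hat z_i)$. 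Combined with the uniform local estimate from the first step, this yields the piecewise bound
\begin{equation*}
|\varphi_t(z_0,t_0)|\leq
\begin{cases} C_\text{A} & t_0\leq t\leq T,\ t\in I,\\
\delta \tilde{C}\,\dfrac{\rho(t)}{\rho(T)}\exp(-\alpha(t-T)) & t>T,\ t\in I,
\end{cases}
\end{equation*}
for all $z_0\in A$, with $C_\text{A}$ finite because $\varphi_t(\cdot,t_0)$ is continuous on the compact set $A$ for each of the finitely-many relevant $t$ (continuous time) or, in discrete time, for the finitely many steps between $t_0$ and $T$.

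Finally I would run the mean-value-theorem bootstrap from Proposition~\ref{Prop:main} on the full trajectory: write $z^+(t)=\big(\left.\partial g/\partial z\right|_{t,0} + (\left.\partial g/\partial z\right|_{t,\xi(t)} - \left.\partial g/\partial z\right|_{t,0})\big)z(t)$, use Lipschitz continuity of $\partial g/\partial z$ in $z$ (uniform in $t$) to bound the perturbation by $\bar C_A|z(t)|$, and apply the $\rho$-weighted time-varying Gr\"onwall inequality (Lemma~\ref{Lem:GronwallTV}). The piecewise bound above guarantees $|z(t)|$ is integrable (continuous time) or absolutely summable (discrete time) over $t\geq t_0$ — here again monotonicity of $\rho$ ensures the tail is dominated by the exponential, so the relevant integral/sum $C_\text{z}$ is finite — and the Gr\"onwall step then gives $|z(t)|\leq C_\text{l}\exp(C_\text{l}\bar C_A C_\text{z})\,|z_0|\,(\rho(t)/\rho(t_0))\exp(-\alpha(t-t_0))$, i.e.\ the claimed bound with $\hat C := C_\text{l}\exp(C_\text{l}\bar C_A C_\text{z})$. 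The main obstacle I anticipate is the first step: one must verify that the local-attractivity lemma genuinely holds \emph{uniformly in the entry time} with the $\rho$-weighted rate, which requires that the Gr\"onwall estimate used to absorb the nonlinearity be compatible with a time-varying comparison function $\rho(t)\exp(-\alpha t)$; stating and proving Lemma~\ref{Lem:GronwallTV} in the form needed (allowing a monotone prefactor) is the real technical content, while everything downstream is a bookkeeping adaptation of the time-invariant argument.
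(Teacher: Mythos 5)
Your proposal follows essentially the same route as the paper's own proof: a time-varying local attractivity lemma (the paper's Lemma~\ref{Lem:AsymptConvTV}), the compactness/finite-subcover claim to reach $B_\delta$ in finite time, the piecewise bound, and the mean-value-theorem bootstrap closed with the $\rho$-weighted Gr\"onwall inequality (the paper's Lemma~\ref{Lem:GronwallTV}), yielding the same constant $\hat{C}=C_\text{l}\exp(C_\text{l}\bar{C}_\text{A}C_\text{z})$. You also correctly identify that the genuine technical content lies in the two appendix lemmas, which is exactly where the paper places it.
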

\begin{proof}
The set $A$ is compact, nonempty, and contains the origin. Lemma~\ref{Lem:AsymptConvTV} (see Appendix~\ref{App:PropMainTV}) implies that there is a ball $B_\delta$ of radius $\delta > 0$, centered at the origin, such that any trajectory starting in $B_\delta$ at time $\tau \in I$ converges with rate $(\rho(t)/\rho(\tau)) \exp(-\alpha (t-\tau))$. The following claim can be verified with the arguments of Proposition~\ref{Prop:main}.

\noindent\emph{Claim:} There exists a finite time $T_\text{m}>t_0$, $T_\text{m}\in I$ such that for all $z_0\in A$, $\varphi_{T_\text{m}}(z_0,t_0) \in B_\delta$.

Moreover, the uniform continuity of $\varphi_t$ for all $t\in I$, $t_0 \leq t \leq T_\text{m}$, implies further that $\varphi_t(A,t_0)$ is bounded for any $t\in I$, $t_0\leq t \leq T_\text{m}$. Combined with Lemma~\ref{Lem:AsymptConvTV}, this yields the following bound
\begin{equation}
|\varphi_t(z_0,t_0)|\leq 
\begin{cases} C_\text{A} &t_0\leq t \leq T_\text{m}, t\in I,\\
\delta \tilde{C} (\rho(t)/\rho(T_\text{m})) \exp(-\alpha (t-T)) &t>T_\text{m}, t\in I,
\end{cases}\label{eq:prooftmp1111}
\end{equation}
for all $z_0\in A$, where $C_\text{A}\geq\delta$ and $\tilde{C}\geq 1$ are positive constants. We fix $z_0\in A$, consider the trajectory $z(t):=\varphi_t(z_0,t_0)$, $t\in I$, and apply the mean value theorem,
\begin{equation}
z^+(t)=\left(\left.\frac{\partial g}{\partial z}\right|_{t,z=0} + \left.\frac{\partial g}{\partial z}\right|_{t,z=\xi(t)} - \left. \frac{\partial g}{\partial z}\right|_{t,z=0} \right) z(t),
\end{equation}
where $\xi(t)$ lies between $z(t)$ and the origin. Due to the fact that the dynamics are assumed to have Lipschitz continuous derivatives (uniformly in $t$), we obtain the following bound 
\begin{equation}
\left\vert \left.\frac{\partial g}{\partial z} \right|_{t,z=\xi(t)} - \left.\frac{\partial g}{\partial z}\right|_{t,z=0} \right\vert \leq \bar{C}_\text{A} |z(t)|,
\end{equation}
where $\bar{C}_\text{A}$ denotes the Lipschitz constant of $\partial g/\partial z$ on $A$ (uniformly in $t$). According to \eqref{eq:prooftmp1111}, the trajectory $|z(t)|$ is integrable (in continuous time) and absolutely summable (in discrete time). We obtain, by virtue of Lemma~\ref{Lem:GronwallTV},
\begin{equation}
|z(t)|\leq C_\text{l} \exp(C_\text{l} \bar{C}_\text{A} C_\text{z}) |z_0| \frac{\rho(t)}{\rho(t_0)} \exp(-\alpha (t-t_0)),  \quad \forall z_0 \in A,
\end{equation}
where $C_\text{z}$ is constant. The constant $C_\text{z}$ is related to an upper bound on the integral (in continuous time) or the sum (in discrete time) of $|z(t)|$ over $t\in I$, which according to \eqref{eq:prooftmp1111}, is guaranteed to be finite.
\end{proof}


\subsection{Example 1}\label{Sec:OptiEx1TV}
\hchange{The energy function, as defined in \eqref{eq:energy}, is not explicitly dependent on time, even if the parameters $d$ and $\beta$ are chosen to be time-varying.} Following the discussion of Section~\ref{Sec:OptiEx1}, we conclude that $\mathcal{A}_f$ is contained in the region of attraction of the equilibrium at the origin, as long as 
\begin{gather*}
0<d(t), \quad 0\leq \beta(t)<2d(t)/C_\text{f}, \quad \forall t\in I,\\
0<d_\infty, \quad 0\leq \beta_\infty < 2 d_\infty/C_\text{f}.
\end{gather*}
As a result, Proposition~\ref{Prop:mainTV} implies that the convergence rate of the trajectories starting from $A \subset \mathcal{A}_f$, $A$ compact, is given by the linearization of \eqref{eq:exCT} about the origin.

After a change of coordinates that diagonalizes $H_\text{e}$, the linearized dynamics of a single coordinate are given by
\begin{equation}
\delta \dot{q}(t)=\delta p(t), \qquad \delta \dot{p}(t)=(-2d(t) - \beta(t) h) \delta p(t) - h \delta q(t), \label{eq:tvlinearization}
\end{equation}
where $\delta q(t)\in \mathbb{R}$, $\delta p(t)\in \mathbb{R}$, and $h$ denotes the corresponding eigenvalue of $H_\text{e}$. As in Section~\ref{Sec:OptiEx1}, upper and lower bounds on the eigenvalues of $H_\text{e}$ are given by $1/\kappa \leq h \leq 1$. The analysis simplifies by expressing the dynamics using the coordinates $\delta \tilde{q}$, which are defined by
\begin{equation}
\exp\left(-\int_{t_0}^{t} \bar{d}(\tau) \diff \tau\right) ~\delta \tilde{q}(t):= \delta q(t), \label{eq:cochange}
\end{equation}
with $\bar{d}(t):=d(t)+\beta(t) h/2$, and yields
\begin{equation}
\delta \ddot{\tilde{q}}(t)=-(-\dot{\bar{d}}(t)-\bar{d}(t)^2 + h) \delta \tilde{q}(t). \label{eq:time-varyingHarmonicOscillator}
\end{equation}
The dynamics \eqref{eq:time-varyingHarmonicOscillator} represent an undamped linear harmonic oscillator whose frequency varies with time. This time-varying frequency can be interpreted as the time-varying generalization of the square-root term in \eqref{eq:eigTI}. The non-square-root term in \eqref{eq:eigTI} is captured by the coordinate change \eqref{eq:cochange}.

In the following, the choice
\begin{equation}
\dot{d}(t) =-d(t)^2+d_\infty^2, \quad \beta(t)=0,\label{eq:Riccatti}
\end{equation}
is discussed. This leads to a time-invariant right-hand side of \eqref{eq:time-varyingHarmonicOscillator}, and simplifies the subsequent analysis. The more general case (no restriction on $\bar{d}$) can be analyzed numerically, or by approximating $\bar{d}$ with a rational function and applying tools from asymptotic analysis \citep{AsymptoticAnalysis}. The solutions of \eqref{eq:Riccatti} monotonically approach $d_\infty$ as $t$ increases, whereby the time-invariant case discussed in Section~\ref{Sec:OptiEx1} is recovered with $d(0)=d_\infty$ ($d(0)=d_\infty$ implies $d(t)=d_\infty$ for all $t\geq 0$). The coordinate transformation \eqref{eq:cochange} therefore implies that, compared to the time-invariant case, the convergence rate can be improved by choosing $d(0)>d_\infty$, as this increases the area underneath the curve $d(t)$. Figure~\ref{Fig:Area} illustrates the situation. 

\begin{figure}
\center
\begin{tikzpicture}[scale=1.5]
\pgfkeys{/pgf/declare function={arctanh(\x) = 0.5*(ln(1+\x)-ln(1-\x));}}
\pgfkeys{/pgf/declare function={dfun(\t,\kappa)=1/sqrt(\kappa) * (tanh(1/sqrt(\kappa)* \t) + sqrt(\kappa))/(1+tanh(1/sqrt(\kappa)* \t)*sqrt(\kappa));}}

\def\kappa{10}
\pgfmathsetmacro\val{1/sqrt(\kappa)};

\draw[domain=0:5, variable=\t]
plot({\t}, {dfun(\t,\kappa)});

\draw[domain=0:5, variable=\t] plot({\t}, {1/sqrt(\kappa)});

\draw [thick] [->] (0,0)--(5.5,0) node[right, below] {$t$};
\draw [thick] [->] (0,0)--(0,1.5) node[left]{$d$};

\fill [gray, domain=0:5, variable=\t]
(0,\val) 
-- plot ( {\t}, {dfun(\t,\kappa)})
-- (5,\val)
-- cycle;

\draw[yshift=\val cm, thick] (-3pt,0pt) -- (3pt,0pt) node[left=0.2cm]{$d_\infty$};
\draw[yshift=1cm, thick] (-3pt,0pt) -- (3pt,0pt) node[left=0.1cm]{$d(0)$};

\end{tikzpicture}
\caption{The figure shows $d(t)$ defined according to \eqref{eq:Riccatti} with $d(0)=1$ and $d_\infty=1/\sqrt{10}$ as an example. Compared to the time-invariant dynamics (obtained in the asymptotic limit), which converge with rate $d_\infty$, the time-varying terms improve the convergence rate by the shaded area, as indicated with \eqref{eq:cochange}.}
\label{Fig:Area}
\end{figure}
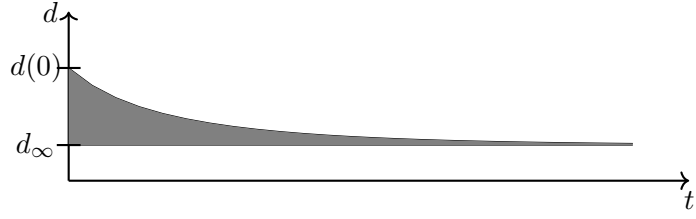

This intuition can be made quantitative by noticing that the solutions of \eqref{eq:Riccatti} are given by 
\begin{align}
d(t)&= d_\infty ~ \text{tanh}(d_\infty (t+C_\text{d})), \quad t\geq t_0,
\end{align}
where $C_\text{d}$ is constant and related to $d(t_0)$. This implies
\begin{align}
\text{exp}(-\int_{t_0}^{t} \bar{d}(\tau) \diff \tau)&=\frac{\text{cosh}(d_\infty (t_0+C_\text{d}))}{\text{cosh}(d_\infty (t+C_\text{d}))}=\frac{e^{-2 d_\infty (C_\text{d}+t_0)} + 1}{e^{-2 d_\infty (C_\text{d}+t)} + 1} e^{-d_\infty (t-t_0)}\nonumber\\
&=\frac{d_\infty+d(t)}{d_\infty + d(t_0)} e^{-d_\infty (t-t_0)}=:\frac{\rho_\text{tv}(t)}{\rho_\text{tv}(t_0)} e^{-d_\infty (t-t_0)}.
\end{align}
Due to the fact that \eqref{eq:time-varyingHarmonicOscillator} reduces to a time-invariant harmonic oscillator, the solutions to \eqref{eq:tvlinearization} can be expressed in closed form with elementary functions. The fundamental solutions to \eqref{eq:tvlinearization} are therefore given by\footnote{We assume $h\neq d_\infty^2$. The case $h=d_\infty^2$ is obtained by considering the solutions of \eqref{eq:tvlinearization} either as the limit $h\downarrow d_\infty^2$ or the limit $h\uparrow d_\infty^2$.}
\begin{align}
 \frac{\rho_\text{tv}(t)}{\rho_\text{tv}(t_0)} e^{(-d_\infty \pm \sqrt{d_\infty^2-h}) (t-t_0)}.\label{eq:fundSol}
\end{align}
The rate of the exponential decay behaves in the same way as the real part of the expression \eqref{eq:eigTI} ($d_\infty$ corresponds to $d$ in \eqref{eq:eigTI}) and as a result, the discussion for the time-invariant case applies here in the same way. This concludes that accelerated convergence occurs as long as $d_\infty \sim 1/\sqrt{\kappa}$. Compared to the time-invariant case, however, the convergence is improved by the factor $\rho_\text{tv}(t)/\rho_\text{tv}(t_0)$. For large values of $d_\infty$ the difference is not substantial (at most a constant factor of size $\lim_{t\rightarrow \infty} \rho_\text{tv}(t)/\rho_\text{tv}(t_0)=2 d_\infty/(d_\infty + d(t_0))$ can be gained). However, the improvement becomes crucial for small values of $d_\infty$, since $\rho_\text{tv}(t)/\rho_\text{tv}(t_0)$ behaves as
\begin{equation*}
\frac{\rho_\text{tv}(t)}{\rho_\text{tv}(t_0)} \approx \frac{1}{1+d(t_0) (t-t_0)},
\end{equation*}
for small $t$. 
Thus choosing $d_\infty\sim 1/\sqrt{\kappa}$, ensures convergence roughly with $1/(1+d(t_0)(t-t_0))$ for small $t$, and exponential convergence with rate 
$1/\sqrt{\kappa}$ for large $t$; the transition occurs for $(t-t_0) \approx 1/d_\infty$. The situation is shown in Figure~\ref{Fig:TVdynamics}. 

\begin{figure}
\setlength{\figurewidth}{.7\columnwidth}
\setlength{\figureheight}{.3\columnwidth}
\center
\input{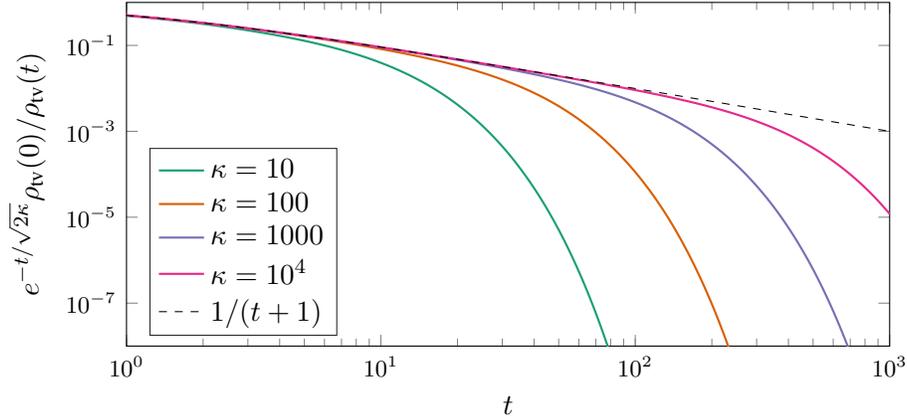}
\caption{The plot shows the convergence rate of the fundamental solutions \eqref{eq:fundSol} as a function of $\kappa$ for the choice $d_\infty=1/\sqrt{2\kappa}$, $t_0=0$, $d(0)=1$. For small values of $t$, the convergence rate is roughly $1/(1+t)$, whereas for larger values of $t$ the convergence rate is approximately $\exp(-t/\sqrt{2\kappa})$.}
\label{Fig:TVdynamics}
\end{figure}

Summarizing, we therefore conclude according to Proposition~\ref{Prop:mainTV} that choosing $d_\infty \sim 1/\sqrt{\kappa}$ implies that the trajectories of the time-varying generalization of \eqref{eq:exCT} satisfy the estimate
\begin{equation}
|(q(t),p(t))|\leq C_\text{tv} |(q(0),p(0))| \frac{\rho_\text{tv}(t)}{\rho_\text{tv}(0)} \begin{cases} e^{-d_\infty t}, & 0\leq d_\infty \leq 1/\sqrt{\kappa},\\
e^{(-d_\infty + \sqrt{d_\infty^2 -1/\kappa})t}, &d_\infty > 1/\sqrt{\kappa},\end{cases}
\end{equation}
for some constant $C_\text{tv}$ (that might strongly depend on $\kappa$) and for all $(q(0),p(0))\in A \subset \mathcal{A}_f$, where $A$ is compact. For any $\kappa\geq 1$, the right-hand side can be bounded by
\begin{equation}
|(q(t),p(t))|\leq \frac{C_\text{tv} |(q(0),p(0))| }{1+d(0) t},
\end{equation}
\hchange{which implies that the convergence rate is at least $\mathcal{O}(1/t)$ for any fixed function $f$ that has non-degenerate isolated local minima (the minima can have an arbitrarily small curvature). We believe that this provides useful insights for the case where the isolated minimum at the origin is degenerate. For example, given a function with an isolated minimum that is degenerate, we can add a regularization of the type $\epsilon |x|^2/2$, where $\epsilon>0$ is small. The bound (54) applies to the resulting regularized function, and implies a convergence rate of the order $\mathcal{O}(1/t)$.}

\hchange{It is important to notice that the results in this section are stated in terms of the distance between the current iterate and the optimizer, i.e., $|(q(t)-x^*,p(t))|$. In the case of smooth and convex functions, this yield the following bound on the function value:
\begin{equation*}
    f(q(t))-f(x^*) \leq L/2 |q(t)-x^*|^2 \leq \frac{C_{\text{tv}}^2}{(1+t)^2} |q(0)-x^*|^2,
\end{equation*}
provided that the algorithm is initialized with $p(0)=0$. Thus, our analysis recovers the well-known optimal rate $\mathcal{O}(1/t^2)$ (in terms of function value) for smooth and convex functions.}

\subsection{Example 2}\label{Sec:OptiEx2TV}
We consider the time-varying generalization of \eqref{eq:exDT}, where the parameters $d>0$ and $\beta\geq 0$ are assumed to be time varying. We focus on the case where $f$ is convex and $\beta_k$ is chosen to be $\beta_k=T(1-2 d_k T)$, as this simplifies the stability analysis and enables explicit closed-form evaluation of certain expressions. The stability analysis carried out in Appendix~\ref{App:DisStabilityConvex} applies likewise to the time-varying case; it suffices to replace $d$ with $d_k$ and $\beta$ with $\beta_k$. This concludes that the origin is asymptotically stable as long as $0<d_k T < 1$, $0<T\leq 1/\sqrt{L}$.

After a change of coordinates that diagonalizes $H_\text{e}$, the linearized dynamics of a single coordinate are given by
\begin{equation}
\delta q(k+1)=\delta q(k)+ T \delta p(k+1), \qquad \delta p(k+1)=\delta p(k)-T(2d_k+\beta_k h) \delta p(k) - T h \delta q(k), \label{eq:linDT}
\end{equation}
where $\delta q(k)\in \mathbb{R}, \delta p(k)\in \mathbb{R}$, and $h$ denotes the corresponding eigenvalue of $H_\text{e}$ with upper and lower bounds $1/\kappa \leq h \leq 1$. In analogy to the transformation \eqref{eq:cochange} in the continuous-time case, we define 
\begin{equation}
\left(-\frac{1}{2}\right)^{k_0+1} (1-hT^2)^{k-k_0-2} \left( \prod_{j=k_0+1}^{k-1} (1- d_{j} T) \right) \delta \tilde{q}(k):=\delta q(k), \label{eq:cochangeDT}
\end{equation}
which transforms \eqref{eq:linDT} into
\begin{equation}
\delta \tilde{q}(k+2) - 2 \delta \tilde{q}(k+1) + \frac{4 \beta_{k+1}/T }{(1-hT^2)(1+\beta_{k+1}/T)(1+\beta_{k}/T)} \delta \tilde{q}(k)=0. \label{eq:time-varyingHODT}
\end{equation}
This transformation is well known in the literature on linear difference equations~\citep[see, for example,][p.~369]{IntroDifferenceEquations}.
As in the continuous-time case, the analysis is considerably simplified if $\beta_k$ is chosen in such a way that the coefficient multiplying $\delta \tilde{q}(k)$ in \eqref{eq:time-varyingHODT} remains constant. This can be achieved, for example, with
\begin{equation}
4 \beta_{k+1}/T=\alpha_\text{tv} (1+\beta_{k+1}/T)(1+\beta_k/T), \label{eq:ricattiDT2}
\end{equation}
where $\alpha_\text{tv}$ is constant and independent of $h$. \hchange{This equation defines a recurrence relation for $\beta_k$, and through $\beta_k=T(1-2 d_k T)$ a relation for $d_k$. The choice \eqref{eq:ricattiDT2} is motivated by the fact that 1) the recurrence relation is independent of $h$ and 2) the difference equations \eqref{eq:time-varyingHODT} is time-invariant, which enables closed-form solutions.} 
Thus, any trajectory $\delta q(k)$ satisfying \eqref{eq:linDT} is a linear combination of the two fundamental solutions
\begin{equation}
(1-hT^2 \pm \sqrt{(1-h T^2)^2 - \alpha (1-hT^2)})^{k-k_0} \prod_{j=k_0+1}^{k-1} (1-d_j T). \label{eq:fundSolDT}
\end{equation}
This explicitly characterizes the convergence rate of \eqref{eq:linDT} and, by virtue of Proposition~\ref{Prop:mainTV}, the convergence rate of \eqref{eq:exDT}, (for time-varying $d_k$ and $\beta_k$). Similar to the continuous-time discussion in Section~\ref{Sec:OptiEx1TV}, the convergence rate can be improved by choosing $d_k$ to be close to one for small $k$. For the specific choice $T=0.5$, $\beta_0=0$, $d_0=1$, $\lim_{k\rightarrow \infty} d_k = d_\infty=1/\sqrt{2\kappa}$, $L=1$, the dependence of the convergence rate on $\kappa$ is illustrated in Figure~\ref{Fig:ConvRateDT}. As in the continuous-time case, the introduction of the time-varying parameter $d_k$ ensures that the fundamental solutions converge approximately with $1/(1+d_0 T k)$ for small $k$, whereas for large $k$ the convergence is linear with rate $1-T/\sqrt{2\kappa}$. The transition occurs for $k \sim \sqrt{2\kappa}/T$. This indicates that even for very large $\kappa$, the fundamental solutions converge roughly with $1/(1+d_0 T k)$.

\begin{figure}
\setlength{\figurewidth}{.7\columnwidth}
\setlength{\figureheight}{.3\columnwidth}
\center
\input{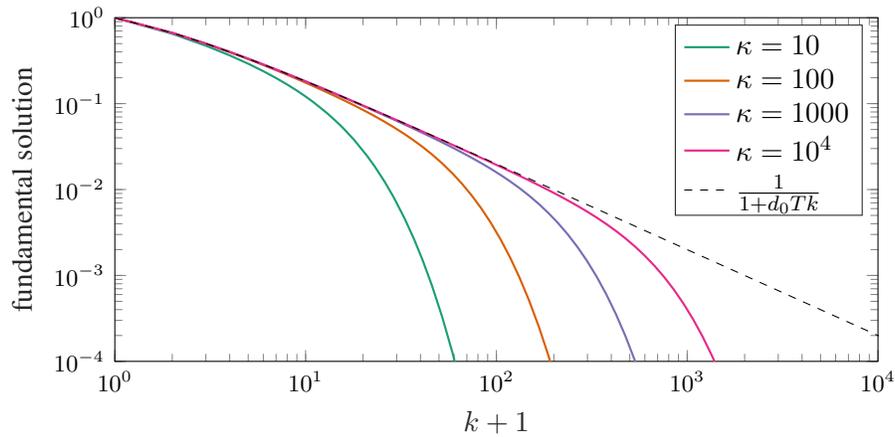}
\caption{The plot shows the convergence rate of the fundamental solutions \eqref{eq:fundSolDT} as a function of $\kappa$. For small values of $k$, the convergence rate is roughly $1/(1+d_0 T k)$, whereas for larger values of $k$ the convergence is linear with rate $1-T/\sqrt{2 \kappa}$.}
\label{Fig:ConvRateDT}
\end{figure}


\section{Conclusions}\label{Sec:Conclusion}
We have presented a convergence-rate analysis of momentum-based optimization algorithms from a dynamical systems point of view. Our analysis emphasizes the importance of the curvature properties about an isolated local minimum, which, up to a multiplicative constant, dictate the convergence rate. In addition, we find that reducing the damping over time improves the convergence rate by sublinear terms, which is important for objective functions that have minima with almost vanishing curvature. The use of momentum ensures robustness of the convergence rate against small changes in the curvature and leads, in many cases, to acceleration.

We also provided a rigorous motivation for  the use of symplectic discretization schemes, showing that they enable the computation of a modified energy function that is (almost exactly) preserved by the conservative parts of the dynamics. The modified energy function provides a means for stability analysis, which implies, for example, that certain heavy-ball-type methods are in fact accelerated. Thus, an evaluation of the gradient at a shifted position is not necessary for achieving convergence rates that scale with $1/\sqrt{\kappa}$ for large $\kappa$.

Our analysis emphasizes fundamental similarities between convergence rate analysis in continuous time and discrete time, and provides intuitive and rigorous explanations for various phenomena encountered in optimization, without resorting to convexity.



\acks{We thank the Branco Weiss Fellowship, administered by ETH Zurich, for the generous support and the Office of Naval Research under grant number N00014-18-1-2764. This work was also funded by the Deutsche Forschungsgemeinschaft (DFG, German Research Foundation)--MU 4710/2-1.}

\newpage


\appendix

\section{Proof of Proposition~\ref{Prop:main}}\label{App:PropMain}
For proving the proposition we rely on the following lemmas.
\begin{lemma}\label{Lem:Gronwall}
Consider the trajectories of a time-varying dynamical system
\begin{equation}
x^+(t)=(A+ B(t)) x(t),\quad \forall t\in I,
\end{equation}
with $A\in \mathbb{R}^{2n\times 2n}$ and $B: I \rightarrow \mathbb{R}^{2n\times 2n}$, where $B$ is either continuous (in the continuous-time case) or $A+B(t)$ is invertible (in the discrete-time case). Let $A$ be such that the trajectories $w^+(t)=A w(t)$ converge exponentially with rate $\alpha$, i.e., $|w(t)|\leq C_\text{w} |w(0)| \exp(-\alpha t)$, for all $w(0)\in \mathbb{R}^{2n}$, $t\in I$, and for some constant $C_\text{w}\geq 1$. Then, $x(t)$ satisfies the estimate
\begin{align}
|x(t)|&\leq C_\text{w} |x(0)| \exp\left(-\alpha t+ C_\text{w} \int_{0}^{t} |B(\tau)| \diff \tau\right), \qquad \text{resp.}\\
|x(t)|&\leq C_\text{w} |x(0)| \exp\left(-\alpha t+ C_\text{w} \exp(\alpha) \sum_{\tau=0}^{t-1} |B(\tau)|\right).
\end{align}
\end{lemma}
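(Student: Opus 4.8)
\textbf{Proof plan for Lemma~\ref{Lem:Gronwall}.} The plan is to treat the linear time-varying system as a perturbation of the exponentially stable system $w^+ = Aw$, expressing the solution via a variation-of-constants (Duhamel) formula and then closing the resulting integral (resp.\ sum) inequality with Gr\"onwall's lemma. I will handle the continuous-time and discrete-time cases in parallel, since the structure is identical and only the bookkeeping differs.

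First I would write the variation-of-constants formula. In continuous time, let $\Phi_A(t) = e^{At}$ denote the flow of $w^+ = Aw$, so that the hypothesis reads $|\Phi_A(t)v| \le C_\text{w}|v|e^{-\alpha t}$ for all $v$ and $t\ge 0$. Viewing $B(t)x(t)$ as an inhomogeneous forcing term, the solution of $x^+ = (A+B(t))x$ satisfies
\begin{equation*}
x(t) = \Phi_A(t) x(0) + \int_0^t \Phi_A(t-\tau) B(\tau) x(\tau)\,\diff\tau.
\end{equation*}
In discrete time, with $\Phi_A(k) = A^k$ and the hypothesis $|A^k v| \le C_\text{w}|v|e^{-\alpha k}$, the analogous identity is $x(t) = A^t x(0) + \sum_{\tau=0}^{t-1} A^{t-1-\tau} B(\tau) x(\tau)$; the invertibility of $A+B(t)$ is what guarantees this discrete Duhamel representation is the unique solution (and that trajectories are well-defined backward, matching the hypothesis of Lemma~\ref{Lem:AsymptConv} and Proposition~\ref{Prop:main}). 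Next I would take norms, apply the exponential bound on $\Phi_A$ inside the integral/sum, and multiply through by $e^{\alpha t}$ to obtain, in continuous time,
\begin{equation*}
e^{\alpha t}|x(t)| \le C_\text{w}|x(0)| + C_\text{w}\int_0^t |B(\tau)|\, e^{\alpha \tau}|x(\tau)|\,\diff\tau,
\end{equation*}
and in discrete time $e^{\alpha t}|x(t)| \le C_\text{w}|x(0)| + C_\text{w}e^{\alpha}\sum_{\tau=0}^{t-1} |B(\tau)|\,e^{\alpha\tau}|x(\tau)|$, where the extra factor $e^{\alpha}$ comes from bounding $e^{\alpha(t-1-\tau)} = e^{-\alpha}e^{\alpha(t-\tau)}$... more carefully, from writing $e^{\alpha t}|A^{t-1-\tau}B(\tau)x(\tau)| \le C_\text{w}e^{\alpha t}e^{-\alpha(t-1-\tau)}|B(\tau)||x(\tau)| = C_\text{w}e^{\alpha}e^{\alpha\tau}|B(\tau)||x(\tau)|$.

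Then I would invoke the scalar Gr\"onwall inequality (continuous version: if $u(t) \le a + \int_0^t g(\tau)u(\tau)\diff\tau$ with $g\ge 0$, then $u(t)\le a\exp(\int_0^t g(\tau)\diff\tau)$; discrete version: if $u(t) \le a + \sum_{\tau=0}^{t-1} g(\tau)u(\tau)$ with $g\ge 0$, then $u(t) \le a\prod_{\tau=0}^{t-1}(1+g(\tau)) \le a\exp(\sum_{\tau=0}^{t-1} g(\tau))$) applied to $u(t) = e^{\alpha t}|x(t)|$, $a = C_\text{w}|x(0)|$, and $g(\tau) = C_\text{w}|B(\tau)|$ resp.\ $g(\tau) = C_\text{w}e^{\alpha}|B(\tau)|$. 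Dividing back by $e^{\alpha t}$ yields exactly the two claimed estimates. I do not anticipate a genuine obstacle here — the result is the standard perturbation-of-exponential-stability lemma — but the one point requiring care is the discrete case: one must make sure the index shifts in $A^{t-1-\tau}$ are tracked correctly so that the $e^{\alpha}$ prefactor appears in the right place, and one must state precisely why the discrete Duhamel formula holds (uniqueness of the forward recursion, which is automatic, plus the invertibility hypothesis if one wants the representation to also characterize backward trajectories). A secondary minor point is confirming measurability/integrability of $|B(\tau)|e^{\alpha\tau}|x(\tau)|$ in the continuous case, which follows from continuity of $B$ and of the trajectory $x$.
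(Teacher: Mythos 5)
Your proposal is correct and follows essentially the same route as the paper's proof: the variation-of-constants (Duhamel) representation with respect to the nominal flow $e^{At}$ (resp.\ $A^t$), taking norms, multiplying by $e^{\alpha t}$, and closing with the continuous and discrete Gr\"onwall inequalities, with the $e^{\alpha}$ prefactor arising from the same index shift in the discrete sum. No gaps to flag.
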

\begin{proof}
We consider the continuous-time case first. Due to the continuity assumptions on $B(t)$, the dynamics are guaranteed to have a unique solution, $x(t)$, satisfying
\begin{equation}
x(t) = \exp(A t) x(0) + \int_{0}^{t} \exp(A(t-\tau)) B(\tau) x(\tau) \diff \tau,
\end{equation}
where $\exp$ denotes the matrix exponential. The two-norm of $\exp(A (t-\tau))$ is, by assumption, upper bounded by $C_\text{w} \exp(-\alpha (t-\tau))$, for all $t,\tau \in I$, $t \geq \tau$, yielding the following estimate:
\begin{align}
\exp(\alpha t) |x(t)| \leq C_\text{w} |x(0)| + C_\text{w} \int_{0}^{t} |B(\tau)|  \exp(\alpha \tau) |x(\tau)| \diff \tau.\label{eq:tmp11}
\end{align}
Applying the Gr\"onwall inequality to \eqref{eq:tmp11} yields the desired result. 

The discrete-time case is analogous. Due to the assumption of $A+B(t)$ being full rank, the dynamics are guaranteed to have a unique solution $x(t)$, which satisfies
\begin{equation}
x(t)=A^t x(0)+\sum_{\tau=1}^{t} A^{t-\tau} B(\tau-1) x(\tau-1);
\end{equation}
see, for example, \citet[p.~59]{DifferenceEquations}. By assumption, the two-norm of $A^{t-\tau}$ is bounded by $C_\text{w} \exp(-\alpha(t-\tau))$, for all $t,\tau \in I$, $t\geq \tau$. As a result, the following estimate is obtained:
\begin{align}
\exp(\alpha t) |x(t)|&\leq C_\text{w} |x(0)| + C_\text{w} \sum_{\tau=1}^{t} \exp(\alpha \tau) |B(\tau-1)| |x(\tau-1)|\\
&=C_\text{w} |x(0)| + C_\text{w} \exp(\alpha) \sum_{\tau=0}^{t-1}  |B(\tau)| \exp(\alpha \tau) |x(\tau)|.
\end{align}
Applying the Gr\"onwall inequality \citep[p.~186]{DifferenceEquations}, yields
\begin{align}
|x(t)| &\leq C_\text{w} |x(0)| \exp(-\alpha t)~ \prod_{\tau=0}^{t-1} (1 + C_\text{w} \exp(\alpha) |B(\tau)|)\\
&\leq C_\text{w} |x(0)| \exp\left(-\alpha t + C_\text{w} \exp(\alpha) \sum_{\tau=0}^{t-1} |B(\tau)| \right).
\end{align}
\end{proof}

\begin{lemma}\label{Lem:AsymptConv}
Let Assumption~\ref{Ass:lin} be fulfilled. Then there exists an open ball $B_\delta$ (with radius $\delta>0$) centered at the origin and a constant $\tilde{C}\geq 1$, such that for all $z_0 \in B_\delta$,
\begin{equation}
|\varphi_t(z_0)| \leq \tilde{C} |z_0| \exp(-\alpha t), \quad \forall t\in I.
\end{equation}
\end{lemma}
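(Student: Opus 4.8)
The plan is to prove Lemma~\ref{Lem:AsymptConv} by reducing the nonlinear flow near the origin to a linear-plus-perturbation system and then invoking Lemma~\ref{Lem:Gronwall}. First I would write the nonlinear dynamics \eqref{eq:ODE11} in the form $z^+(t) = \left(\left.\frac{\partial g}{\partial z}\right|_{z=0} + \left(\left.\frac{\partial g}{\partial z}\right|_{z=\xi(t)} - \left.\frac{\partial g}{\partial z}\right|_{z=0}\right)\right) z(t)$ using the mean value theorem, exactly as in the proof of Proposition~\ref{Prop:main}, where $\xi(t)$ lies on the segment between $z(t)$ and the origin. By the Lipschitz continuity of $\partial g/\partial z$ (Assumption~\ref{Ass:Cont}), the perturbation term $B(t) := \left.\frac{\partial g}{\partial z}\right|_{z=\xi(t)} - \left.\frac{\partial g}{\partial z}\right|_{z=0}$ satisfies $|B(t)| \leq \bar{C} |z(t)|$ for some constant $\bar{C}$ (a Lipschitz constant of $\partial g/\partial z$ on a fixed bounded neighborhood of the origin). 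Assumption~\ref{Ass:lin} provides the needed hypothesis on $A := \left.\frac{\partial g}{\partial z}\right|_{z=0}$ for Lemma~\ref{Lem:Gronwall}: namely the linearized flow converges exponentially with rate $\alpha$ and constant $C_\text{l}$.

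The main obstacle is that Lemma~\ref{Lem:Gronwall} requires the perturbation $|B(t)|$ to be integrable (continuous time) or absolutely summable (discrete time) along the trajectory in order to conclude the exponential bound — but $|B(t)| \leq \bar{C}|z(t)|$, and a priori we do not yet know $z(t)$ stays small or decays. This is the usual bootstrap/continuation difficulty in the Lyapunov linearization theorem. The resolution is a continuity-and-contradiction argument: choose $\delta>0$ small enough that $C_\text{l}\bar{C}\,\delta\,(\text{const})$ is, say, smaller than $\alpha/2$ in the relevant integral bound; then as long as $|z(\tau)| \leq 2 C_\text{l} \delta \exp(-\alpha\tau/2)$ on $[0,t]$ (in continuous time) the integral $\int_0^t |B(\tau)|\,\diff\tau$ is bounded by a constant independent of $t$, and Lemma~\ref{Lem:Gronwall} then yields $|z(t)| \leq C_\text{l}|z_0|\exp(-\alpha t + C_\text{l}\bar{C} C_\delta)$ with $C_\delta$ finite — which, for $\delta$ small, is strictly better than the assumed bound, closing the bootstrap. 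The standard way to make this rigorous is to define $t^* := \sup\{ t : |z(\tau)| \leq R\exp(-\alpha\tau/2) \ \forall \tau \in [0,t]\}$ for a suitable $R$ proportional to $|z_0|$ and show $t^* = \infty$ by ruling out a finite $t^*$ via the strict-improvement estimate, using continuity of $t \mapsto |z(t)|$ (and in discrete time, the analogous summability estimate with the geometric factor $\exp(\alpha)$ from Lemma~\ref{Lem:Gronwall}).

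Concretely, the steps in order: (1) fix a bounded neighborhood $U$ of the origin on which $\partial g/\partial z$ has Lipschitz constant $\bar{C}$; (2) apply the mean value theorem to get the linear-plus-$B(t)$ representation with $|B(t)| \leq \bar{C}|z(t)|$ while $z(t) \in U$; (3) pick $\delta \leq \delta_0$ small, where $\delta_0$ depends on $\bar{C}$, $C_\text{l}$, $\alpha$ (and, in discrete time, on $\exp(\alpha)$), so that the constant $C_\text{l}\bar{C} C_\text{z}$ appearing in the Grönwall conclusion is bounded uniformly once $z$ is confined to a fast-decaying tube; (4) run the continuation argument on $t^*$ to conclude $|z(t)| \leq C_\text{l}|z_0| \exp(C_\text{l}\bar{C} C_\text{z})\exp(-\alpha t)$ for all $t \in I$ and all $z_0 \in B_\delta$; (5) set $\tilde{C} := C_\text{l}\exp(C_\text{l}\bar{C} C_\text{z}) \geq 1$. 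I expect the only delicate point beyond bookkeeping is verifying that the constant $C_\text{z}$ bounding $\int_0^t |z(\tau)|\,\diff\tau$ (resp.\ $\sum |z(\tau)|$) is independent of $t$ — but this follows immediately from the tube bound $|z(\tau)| \leq R\exp(-\alpha\tau/2)$ by integrating (resp.\ summing) the geometric decay, so $C_\text{z} \leq 2R/\alpha$ (resp.\ $C_\text{z} \leq R/(1-\exp(-\alpha/2))$).
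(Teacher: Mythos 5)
Your proposal is correct, and it rests on the same two pillars as the paper's proof: the mean-value-theorem representation $z^+(t)=(A+B(t))z(t)$ with $|B(t)|\leq \bar{C}|z(t)|$, and Lemma~\ref{Lem:Gronwall}. The difference is in how the circularity (``$B$ is only small/integrable if $z$ already decays'') is broken. The paper first asserts Lyapunov stability of the nonlinear equilibrium (from Assumption~\ref{Ass:lin}), so that $|z(t)|<\varepsilon$ for all $t$ once $|z_0|<\delta(\varepsilon)$; it then applies Lemma~\ref{Lem:Gronwall} twice: a first pass with the crude bound $|B(t)|\leq C_\text{g}\varepsilon$ and $C_\text{l}C_\text{g}\varepsilon<\alpha/2$ yields decay at rate $\alpha/2$, and this intermediate decay makes $\int_0^\infty|B(\tau)|\,\diff\tau\leq 1$, so a second pass recovers the full rate $\alpha$ with $\tilde{C}=C_\text{l}e^{C_\text{l}}$. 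You instead never presuppose nonlinear stability and close the loop with a continuation argument on the decaying tube $|z(\tau)|\leq R\exp(-\alpha\tau/2)$, using the strict-improvement estimate to force $t^*=\infty$, and then read off the rate-$\alpha$ bound from a single Grönwall application with $C_\text{z}\leq 2R/\alpha$. Both arguments are sound; yours is somewhat more self-contained (it effectively proves the stability statement the paper takes as given, at the cost of the constant bookkeeping you correctly identify, e.g.\ choosing $\delta$ so small that $C_\text{l}|z_0|\exp(2C_\text{l}\bar{C}R/\alpha)<R$ and that the tube stays inside the neighborhood where $\bar{C}$ is a valid Lipschitz constant), while the paper's two-pass refinement avoids the explicit $t^*$ argument by leaning on the standard stability claim and obtains a clean closed-form constant.
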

\begin{proof}
We consider the continuous-time case first. Assumption~\ref{Ass:lin} implies that the dynamics \eqref{eq:ODE11} and \eqref{eq:ODE12} are (asymptotically) stable. This implies that for any $\varepsilon>0$ there exists a constant $\delta(\varepsilon) >0$ such that $|\varphi_t(z_0)|<\varepsilon$ for all $z_0\in \mathbb{R}^{2n}$ with $|z_0|<\delta(\varepsilon)$. For any $\varepsilon>0$, we consider the trajectory starting at $z_0\in \mathbb{R}^{2n}$, with $|z_0|<\delta(\varepsilon)$ and reformulate the dynamics by applying the mean value theorem:
\begin{align}
z^+(t)=\left.\frac{\partial g}{\partial z}\right|_{z=\xi(t)} z(t)=\left(\left.\frac{\partial g}{\partial z}\right|_{z=0} + \left.\frac{\partial g}{\partial z}\right|_{z=\xi(t)} - \left. \frac{\partial g}{\partial z}\right|_{z=0} \right) z(t),
\end{align}
where $\xi(t)\in \mathbb{R}^{2n}$ lies between the origin and $z(t)$. The continuity assumption on the dynamics leads to the following estimate:
\begin{equation}
\Big\vert\left.\frac{\partial g}{\partial z}\right|_{z=\xi(t)} - \left. \frac{\partial g}{\partial z}\right|_{z=0}\Big\vert \leq C_\text{g} |\xi(t)| \leq C_\text{g} |z(t)| < C_\text{g} \varepsilon, \label{eq:proofest11}
\end{equation}
for all $t\in I$ and where $C_\text{g}$ is a Lipschitz constant of $\partial g/\partial z$ in a neighborhood of the origin.

Applying Lemma~\ref{Lem:Gronwall} then yields
\begin{equation}
|z(t)|\leq C_\text{l} |z_0| \exp(-\alpha t + C_\text{l} C_\text{g} \varepsilon t ), \quad \forall t\in I.
\end{equation}
We fix $\varepsilon>0$ such that $C_\text{l} C_\text{g} \varepsilon < \alpha/2$.
This leads, in turn, to a refinement of the estimate \eqref{eq:proofest11}, 
\begin{equation}
\Big\vert\left.\frac{\partial g}{\partial z}\right|_{z=\xi(t)} - \left. \frac{\partial g}{\partial z}\right|_{z=0}\Big\vert \leq C_\text{l} C_\text{g} \delta(\varepsilon) \exp(-\alpha t/2)< \frac{\alpha}{2} \exp(-\alpha t/2), \quad \forall t\in I,
\end{equation}
where we have used the fact that $\delta(\varepsilon)\leq \varepsilon$. This results in
\begin{equation}
\int_{0}^{t} \Big\vert\left.\frac{\partial g}{\partial z}\right|_{z=\xi(\tau)} - \left. \frac{\partial g}{\partial z}\right|_{z=0}\Big\vert  \diff \tau < \frac{\alpha}{2} \int_{0}^{\infty} \exp(-\alpha \tau/2) \diff \tau = 1, \quad \forall t\in I. 
\end{equation}
Applying Lemma~\ref{Lem:Gronwall} once more therefore implies
\begin{equation}
|z(t)|\leq \underbrace{C_\text{l} e^{C_\text{l}}}_{:=\tilde{C}} |z_0| \exp(-\alpha t), \quad \forall t\in I,
\end{equation}
for all $z_0\in \mathbb{R}^{2n}$ with $|z_0|<\delta(\varepsilon)$, which leads to the desired result.

The same arguments (with slightly modified constants) apply to the discrete-time case.
\end{proof}


\section{Proof of Proposition~\ref{Prop:mainTV}}\label{App:PropMainTV}
For proving the proposition we rely on the following lemmas.
\begin{lemma}\label{Lem:GronwallTV}
Consider the trajectories of a time-varying dynamical system
\begin{equation}
x^+(t)=(A(t)+ B(t)) x(t),\quad \forall t\in I,
\end{equation}
with $A: I \rightarrow \mathbb{R}^{2n\times 2n}$ and $B: I \rightarrow \mathbb{R}^{2n\times 2n}$, where $A$ and $B$ are either continuous (in the continuous-time case) or $A(t)+B(t)$ is invertible for all $t\in I$ (in the discrete-time case). Let $\phi(t,\tau)\in \mathbb{R}^{2n\times 2n}$, $t,\tau\in I$, $t\geq \tau$, be defined by $\phi(t,\tau)^+= A(t) \phi(t,\tau)$ (for a fixed $\tau$) and $\phi(\tau,\tau)=I$, and assume $\phi(t,\tau)$ satisfies
\begin{equation*}
|\phi(t,\tau)|\leq C_\phi \frac{\rho(t)}{\rho(\tau)},
\end{equation*}
where $C_\phi \geq 1$ is constant, $\rho:I \rightarrow \mathbb{R}_{\geq 0}$ is continuous, monotonically decreasing, and $\lim_{t\rightarrow \infty} \rho(t)=0$. Then, $x(t)$ satisfies the estimate
\begin{align}
|x(t)|&\leq C_\phi |x(\tau)| \frac{\rho(t)}{\rho(\tau)} \exp\left(C_\phi \int_{\tau}^{t} |B(s)| \diff s\right), \qquad \text{resp.}\\
|x(t)|&\leq C_\phi |x(\tau)| \frac{\rho(t)}{\rho(\tau)}  \exp\left(C_\phi \sum_{j=\tau}^{t-1} \frac{\rho(j)}{\rho(j+1)} |B(j)|\right), \quad \forall t,\tau \in I, t\geq \tau.
\end{align}
\end{lemma}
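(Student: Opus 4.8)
\textbf{Proof proposal for Lemma~\ref{Lem:GronwallTV}.}
The plan is to imitate the proof of Lemma~\ref{Lem:Gronwall}, replacing the matrix exponential (respectively the matrix power) of the constant matrix $A$ by the state-transition matrix $\phi(t,\tau)$, and replacing the decay factor $\exp(-\alpha t)$ by the weight $\rho(t)$. First, in the continuous-time case, the assumed continuity of $A$ and $B$ guarantees existence and uniqueness of the solution, and I would write down the variation-of-constants representation
\[
x(t)=\phi(t,\tau)x(\tau)+\int_{\tau}^{t}\phi(t,s)B(s)x(s)\,\diff s,
\]
which one verifies by differentiating in $t$ and using $\phi(t,s)^{+}=A(t)\phi(t,s)$ together with $\phi(s,s)=I$ (so that the integrand at $s=t$ contributes $B(t)x(t)$ and the remaining terms assemble to $A(t)x(t)$). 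Taking norms and inserting the hypothesis $|\phi(t,s)|\le C_{\phi}\,\rho(t)/\rho(s)$ gives
\[
|x(t)|\le C_{\phi}\frac{\rho(t)}{\rho(\tau)}|x(\tau)|+C_{\phi}\int_{\tau}^{t}\frac{\rho(t)}{\rho(s)}|B(s)|\,|x(s)|\,\diff s .
\]

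Since $\rho$ is strictly positive on finite horizons (this is implicit in the hypothesis $|\phi(t,\tau)|\le C_\phi\rho(t)/\rho(\tau)$, and on any interval $[\tau,T]$ one may otherwise restrict attention to the subinterval where $\rho>0$), I would divide through by $\rho(t)$ and set $u(t):=|x(t)|/\rho(t)$, obtaining $u(t)\le C_{\phi}u(\tau)+C_{\phi}\int_{\tau}^{t}|B(s)|u(s)\,\diff s$. The scalar Gr\"onwall inequality then yields $u(t)\le C_{\phi}u(\tau)\exp\!\big(C_{\phi}\int_{\tau}^{t}|B(s)|\,\diff s\big)$, and multiplying back by $\rho(t)$ recovers the first claimed estimate. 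The discrete-time case is handled analogously: using the invertibility of $A(t)+B(t)$ to ensure a well-defined unique solution, the discrete variation-of-constants formula (cf.\ \citet[p.~59]{DifferenceEquations}) reads $x(t)=\phi(t,\tau)x(\tau)+\sum_{j=\tau+1}^{t}\phi(t,j)B(j-1)x(j-1)$; taking norms, applying $|\phi(t,j)|\le C_{\phi}\rho(t)/\rho(j)$, dividing by $\rho(t)$, setting $u(j):=|x(j)|/\rho(j)$, and reindexing $j-1\mapsto j$ produces $u(t)\le C_{\phi}u(\tau)+C_{\phi}\sum_{j=\tau}^{t-1}\frac{\rho(j)}{\rho(j+1)}|B(j)|\,u(j)$, after which the discrete Gr\"onwall inequality \citep[p.~186]{DifferenceEquations} gives $u(t)\le C_{\phi}u(\tau)\prod_{j=\tau}^{t-1}\big(1+C_{\phi}\frac{\rho(j)}{\rho(j+1)}|B(j)|\big)\le C_{\phi}u(\tau)\exp\!\big(C_{\phi}\sum_{j=\tau}^{t-1}\frac{\rho(j)}{\rho(j+1)}|B(j)|\big)$; multiplying by $\rho(t)$ gives the second estimate.

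I expect the only genuinely technical point to be the careful justification of the two variation-of-constants representations and, in the discrete case, the index bookkeeping that turns the shift in the argument of $B$ into the weight $\rho(j)/\rho(j+1)$ (this is the time-varying analogue of the $\exp(\alpha)$ factor appearing in Lemma~\ref{Lem:Gronwall}). Once those representations are in place, the substitution $u=|x|/\rho$ reduces everything to the standard continuous and discrete Gr\"onwall lemmas, so no further obstacle arises; the positivity of $\rho$ needed to form the quotient is the only side remark worth recording.
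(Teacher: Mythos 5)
Your proposal is correct and follows essentially the same route as the paper's proof: the same continuous and discrete variation-of-constants representations, insertion of the bound $|\phi(t,s)|\leq C_\phi\rho(t)/\rho(s)$, normalization by $\rho(t)$ (the paper works with $|x(t)|/\rho(t)$ directly rather than naming it $u$), and the standard continuous and discrete Gr\"onwall inequalities with the reindexing that produces the $\rho(j)/\rho(j+1)$ weights. Your explicit remark on the positivity of $\rho$ on finite horizons is a reasonable addition that the paper leaves implicit.
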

\begin{proof}
We consider the continuous-time case first. Due to the continuity assumptions on $A(t)$ and $B(t)$, the dynamics are guaranteed to have a unique solution $x(t)$, satisfying
\begin{equation}
x(t) = \phi(t,\tau) x(\tau) + \int_{\tau}^{t} \phi(t,s) B(s) x(s) \diff s.
\end{equation}
The bound on the fundamental solution matrix $\phi(t,\tau)$ yields the following estimate:
\begin{align}
\frac{1}{\rho(t)} |x(t)| \leq C_\phi \frac{|x(\tau)|}{\rho(\tau)} + C_\phi \int_{\tau}^{t} |B(s)|  \frac{|x(s)|}{\rho(s)} \diff s.\label{eq:tmp11TV}
\end{align}
Applying the Gr\"onwall inequality to \eqref{eq:tmp11TV} yields the desired result. 

The discrete-time case is analogous. Due to the assumption of $A(t)+B(t)$ being full rank, the dynamics are guaranteed to have a unique solution $x(t)$, which satisfies
\begin{equation}
x(t)=\phi(t,\tau) x(\tau)+\sum_{j=\tau+1}^{t} \phi(t,j) B(j-1) x(j-1);
\end{equation}
see, for example, \citep[p.~59]{DifferenceEquations}. As a result, the bound on $\phi(t,\tau)$ leads to the following estimate
\begin{align}
\frac{1}{\rho(t)} |x(t)|&\leq C_\phi \frac{|x(\tau)|}{\rho(\tau)} + C_\phi \sum_{j=\tau+1}^{t} \frac{\rho(j-1)}{\rho(j)} |B(j-1)| \frac{|x(j-1)|}{\rho(j-1)}.
\end{align}
Applying the Gr\"onwall inequality \citep[p.~186]{DifferenceEquations}, yields
\begin{align}
|x(t)| &\leq C_\phi |x(\tau)| \frac{\rho(t)}{\rho(\tau)} ~ \prod_{j=\tau}^{t-1} (1 + C_\phi \frac{\rho(j)}{\rho(j+1)} |B(j)|)\\
&\leq C_\phi |x(\tau)| \frac{\rho(t)}{\rho(\tau)} \exp\left(C_\phi \sum_{j=\tau}^{t-1} \frac{\rho(j)}{\rho(j+1)} |B(j)| \right),
\end{align}
where the fact that $1+x\leq e^x$, for all $x\geq 0$ has been used.
\end{proof}

\begin{lemma}\label{Lem:AsymptConvTV}
Let Assumption~\ref{Ass:linTV} be fulfilled. Then there exists an open ball $B_\delta$ (with radius $\delta>0$) centered at the origin and a constant $\tilde{C}\geq 1$, such that for all $z_0 \in B_\delta$, and all $\tau\in I$,
\begin{equation}
|\varphi_t(z_0,\tau)| \leq \tilde{C} |z_0| \frac{\rho(t)}{\rho(\tau)} e^{-\alpha (t-\tau)}, \quad \forall t\in I, t\geq \tau.
\end{equation}
\end{lemma}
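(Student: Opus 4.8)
\textbf{Proof plan for Lemma~\ref{Lem:AsymptConvTV}.}
The plan is to imitate the proof of Lemma~\ref{Lem:AsymptConv} almost verbatim, using the time-varying Gr\"onwall estimate of Lemma~\ref{Lem:GronwallTV} in place of Lemma~\ref{Lem:Gronwall}. First I would record what Assumption~\ref{Ass:linTV} buys us: the linearized dynamics $\delta z^+ = (\partial g/\partial z)|_{t,0}\,\delta z$ have a transition matrix $\phi(t,\tau)$ satisfying $|\phi(t,\tau)|\le C_\text{l}\,(\rho(t)/\rho(\tau))\exp(-\alpha(t-\tau))$, and that the full nonlinear dynamics are (uniformly in $t_0$) asymptotically stable, so for every $\varepsilon>0$ there is $\delta(\varepsilon)>0$ with $|\varphi_t(z_0,\tau)|<\varepsilon$ for all $\tau\in I$, $t\ge\tau$, whenever $|z_0|<\delta(\varepsilon)$. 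Then, fixing such a $z_0$ and writing $z(t):=\varphi_t(z_0,\tau)$, I apply the mean value theorem to get $z^+(t) = \big((\partial g/\partial z)|_{t,0} + B(t)\big) z(t)$ with $B(t) := (\partial g/\partial z)|_{t,\xi(t)} - (\partial g/\partial z)|_{t,0}$ and $\xi(t)$ between $0$ and $z(t)$; the uniform-in-$t$ Lipschitz continuity of $\partial g/\partial z$ gives $|B(t)|\le C_\text{g}|z(t)| < C_\text{g}\varepsilon$.

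The second step is the two-pass bootstrap. Applying Lemma~\ref{Lem:GronwallTV} with $A(t) = (\partial g/\partial z)|_{t,0}$, $C_\phi = C_\text{l}$, and the given $\rho$, the crude bound $|B(s)|<C_\text{g}\varepsilon$ yields
\[
|z(t)|\le C_\text{l}|z_0|\,\frac{\rho(t)}{\rho(\tau)}\,\exp\!\big(-\alpha(t-\tau) + C_\text{l}C_\text{g}\varepsilon(t-\tau)\big)
\]
in continuous time (with the analogous summation form in discrete time, where one also needs an upper bound on $\rho(j)/\rho(j+1)$ — see the obstacle below). Now choose $\varepsilon>0$ small enough that $C_\text{l}C_\text{g}\varepsilon < \alpha/2$; this gives a decay of rate at least $\alpha/2$, which feeds back into the estimate on $B$: $|B(t)|\le C_\text{l}C_\text{g}\delta(\varepsilon)(\rho(t)/\rho(\tau))e^{-\alpha(t-\tau)/2}\le C_\text{l}C_\text{g}\delta(\varepsilon)e^{-\alpha(t-\tau)/2}$, using $\rho(t)\le\rho(\tau)$ and $\delta(\varepsilon)\le\varepsilon$. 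Hence $\int_\tau^t|B(s)|\,\mathrm ds$ is bounded by a constant independent of $t$ and $\tau$ (namely $\le C_\text{l}C_\text{g}\cdot\tfrac{2}{\alpha}\cdot\tfrac{\alpha}{2C_\text{l}C_\text{g}}=1$ after shrinking $\varepsilon$ once more so $C_\text{l}C_\text{g}\varepsilon\le\alpha/2$ and noting $\delta(\varepsilon)\le\varepsilon$). Applying Lemma~\ref{Lem:GronwallTV} a second time gives
\[
|z(t)|\le \underbrace{C_\text{l}e^{C_\text{l}}}_{=:\tilde C}\,|z_0|\,\frac{\rho(t)}{\rho(\tau)}\,e^{-\alpha(t-\tau)},\quad \forall t\ge\tau,
\]
for all $|z_0|<\delta(\varepsilon)$, which is the claim with $B_\delta$ the ball of radius $\delta := \delta(\varepsilon)$.

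\textbf{Main obstacle.} The discrete-time case is where care is needed: Lemma~\ref{Lem:GronwallTV} produces factors $\rho(j)/\rho(j+1)$ multiplying $|B(j)|$ in the exponent, and since $\rho$ is only assumed continuous and monotonically decreasing with $\rho(t)\to0$, these ratios are not a priori bounded. In our application, however, $\rho=\rho_\text{tv}$ is explicit (e.g.\ $\rho_\text{tv}(t)=d_\infty+d(t)$ from \eqref{eq:fundSol}), bounded above and below away from zero on the relevant range, so $\rho(j)/\rho(j+1)$ is uniformly bounded; I would either add this as a standing hypothesis on $\rho$ (a uniform bound on consecutive ratios, trivially satisfied by $\rho_\text{tv}$) or simply note that in continuous time the issue does not arise. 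Apart from this bookkeeping, the argument is a routine transcription of Lemma~\ref{Lem:AsymptConv}, so I do not expect genuine difficulty; the only thing to double-check is that the constants $C_\text{g}$, $\delta(\varepsilon)$ from the stability statement can be taken uniform in the initial time $t_0$, which is exactly what the uniform-in-$t_0$ asymptotic stability in the definition of a (time-varying) momentum-based optimization algorithm provides.
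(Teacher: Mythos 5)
Your proof follows the paper's argument for Lemma~\ref{Lem:AsymptConvTV} essentially verbatim: uniform-in-$\tau$ stability to confine the trajectory, the mean value theorem to write the dynamics as the linearization plus a perturbation $B(t)$ with $|B(t)|\le C_\text{g}|z(t)|$, and a two-pass application of Lemma~\ref{Lem:GronwallTV} with $C_\text{l}C_\text{g}\varepsilon<\alpha/2$, yielding $\tilde C=C_\text{l}e^{C_\text{l}}$. The discrete-time caveat you flag --- that the factors $\rho(j)/\rho(j+1)$ appearing in Lemma~\ref{Lem:GronwallTV} are not bounded under the stated hypotheses on $\rho$ --- is genuine and is glossed over in the paper (which only remarks that ``the same arguments with slightly modified constants apply''), and your proposed remedy, a uniform bound on consecutive ratios as satisfied by $\rho_\text{tv}$ in the application, is the appropriate patch.
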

\begin{proof}
We consider the continuous-time case first and fix the initial time $\tau\in I$. Assumption~\ref{Ass:linTV} implies that the dynamics \eqref{eq:mombased2TV} are (asymptotically) stable, uniformly in $\tau$. This implies that for any $\varepsilon>0$ there exists a constant $\delta(\varepsilon) >0$ (independent of $\tau$) such that $|\varphi_t(z_0,\tau)|<\varepsilon$ for all $t\geq \tau$ and for all $z_0\in \mathbb{R}^{2n}$ with $|z_0|<\delta(\varepsilon)$. For any $\varepsilon>0$, we consider the trajectory starting at $z_0\in \mathbb{R}^{2n}$ at time $\tau \in I$, with $|z_0|<\delta(\varepsilon)$ and reformulate the dynamics by applying the mean value theorem:
\begin{align}
z^+(t)=\left.\frac{\partial g}{\partial z}\right|_{t,z=\xi(t)} z(t)=\left(\left.\frac{\partial g}{\partial z}\right|_{t,z=0} + \left.\frac{\partial g}{\partial z}\right|_{t,z=\xi(t)} - \left. \frac{\partial g}{\partial z}\right|_{t,z=0} \right) z(t),
\end{align}
where $\xi(t)\in \mathbb{R}^{2n}$ lies between the origin and $z(t)$. The assumptions on the dynamics, leads to the following estimate:
\begin{equation}
\Big\vert\left.\frac{\partial g}{\partial z}\right|_{t,z=\xi(t)} - \left. \frac{\partial g}{\partial z}\right|_{t,z=0}\Big\vert \leq C_\text{g}|\xi(t)| \leq C_\text{g} |z(t)| < C_\text{g} \varepsilon, \label{eq:proofest11tv}
\end{equation}
for all $t\in I$, $t\geq \tau$, and where $C_\text{g}$ is a time-independent Lipschitz constant of $\partial g/\partial z$ in a neighborhood of the origin.

By assumption, the linear dynamics decay at least with $\exp(-\alpha (t-\tau))$. Thus, applying Lemma~\ref{Lem:GronwallTV} yields
\begin{equation}
|z(t)|\leq C_\text{l} |z(\tau)| \exp(-\alpha (t-\tau) + C_\text{l} C_\text{g} \varepsilon (t-\tau) ), \quad \forall t\in I, \quad t\geq \tau.
\end{equation}
We fix $\varepsilon>0$ such that $C_\text{l} C_\text{g} \varepsilon < \alpha/2$.
This leads, in turn, to a refinement of the estimate \eqref{eq:proofest11tv}, 
\begin{equation}
\Big\vert\left.\frac{\partial g}{\partial z}\right|_{t,z=\xi(t)} - \left. \frac{\partial g}{\partial z}\right|_{t,z=0}\Big\vert \leq C_\text{l} C_\text{g}  \delta(\varepsilon) \exp(-\alpha (t-\tau)/2)< \frac{\alpha}{2} \exp(-\alpha (t-\tau)/2),
\end{equation}
$\forall t\in I$, $t\geq \tau$, where we have used the fact that $\delta(\varepsilon)\leq \varepsilon$. This results in
\begin{equation}
\int_{\tau}^{t} \Big\vert\left.\frac{\partial g}{\partial z}\right|_{t=\hat{t},z=\xi(\hat{t})} - \left. \frac{\partial g}{\partial z}\right|_{t=\hat{t},z=0}\Big\vert  \diff \hat{t} < \frac{\alpha}{2} \int_{\tau}^{\infty} \exp(-\alpha (\hat{t}-\tau)/2) \diff \hat{t} = 1, \quad \forall t\in I. 
\end{equation}
Applying Lemma~\ref{Lem:GronwallTV} once more therefore implies
\begin{equation}
|z(t)|\leq \underbrace{C_\text{l} e^{C_\text{l}}}_{:=\tilde{C}} |z(\tau)| \frac{\rho(t)}{\rho(\tau)} e^{-\alpha (t-\tau)}, \quad \forall t\in I, \quad t\geq \tau,
\end{equation}
for all $z_0\in \mathbb{R}^{2n}$ with $|z_0|<\delta(\varepsilon)$, which leads to the desired result.

The same arguments (with slightly modified constants) apply to the discrete-time case.
\end{proof}


\section{Proof of Proposition~\ref{Prop:ModEnergyShort}}\label{App:ProofTildeH}
We prove the following generalization of Proposition~\ref{Prop:ModEnergyShort}:
\begin{proposition}\label{Prop:ModEnergy}
Let $f$ be analytic on $B_r^\text{c}$, the closed ball of radius $r$ about the origin, and let $L_\text{H}$ be a Lipschitz constant of $\nabla H$, i.e., $|\nabla H(z)|\leq L_\text{H} |z|$ for all $z\in B_r^\text{c}\times B_r^\text{c}$. Then there exists a perturbed Hamiltonian $\tilde{H}: B_r^\text{c}\times B_r^\text{c} \rightarrow \mathbb{R}$, whose trajectories 
\begin{align}
&\dot{\tilde{q}}(t)=\frac{\partial \tilde{H}}{\partial p}\T, \quad
\dot{\tilde{p}}(t)=-\frac{\partial \tilde{H}}{\partial q}\T, \qquad (\tilde{q}(0),\tilde{p}(0))=z_0,	\label{eq:modODE}
\end{align}
are such that
\begin{equation}
|(\tilde{q}(T),\tilde{p}(T))-\Phi_T(z_0)|\leq T C_\text{E} |z_0| e^{-T_0/T},
\end{equation}
for all $0<T\leq T_0/3$ and all $z_0$ such that 
\begin{equation}
|z_0|\leq \frac{r}{2} e^{-10.2 L_\text{H}T}, \label{eq:condZ0}
\end{equation}
where $T_0$ and  $C_\text{E}$ are constants given by
\begin{equation*}
T_0:=\frac{2\text{ln}(2)-1}{2eL_\text{H}}, \quad C_\text{E}:=L_\text{H} (4.36e+e^2 T_0/3).
\end{equation*}
The perturbed Hamiltonian has the form
\begin{equation}
\tilde{H}(q,p)=H(q,p)-\frac{T}{2} \nabla f(q)\T p + T^2 F(q,p), \label{eq:HamForm2}
\end{equation}
where $\tilde{H}$ and $F:B_r^\text{c}\times B_r^\text{c} \rightarrow \mathbb{R}$ satisfy
\begin{align}
|\nabla H(z)-\nabla \tilde{H}(z)|&\leq 15 L_\text{H} T |\nabla H(z)|, \quad
|\nabla F(z)|\leq C_\text{F} |\nabla H(z)|, \quad \forall z\in B_{r/2}^\text{c} \times B_{r/2}^\text{c}, \label{eq:BoundTildeH}
\end{align}
with $C_\text{F}:=356 L_\text{H}^2$.
Moreover, $\tilde{H}$ has the same critical points as $H$ and satisfies
\begin{equation}
|H(z)-\tilde{H}(z)|\leq 15 L_\text{H} T |H(z)|, \quad |F(z)| \leq C_\text{F} |H(z)|, \quad \forall z\in \mathcal{A}_f\cap (B_{r/2}^\text{c} \times B_{r/2}^\text{c}). \label{eq:BoundTildeH2}
\end{equation}
Finally, at time $T$, the difference of the perturbed energy is bounded by
\begin{equation}
|\tilde{H}(\tilde{q}(T),\tilde{p}(T))-\tilde{H}(\Phi_T(z_0))|\leq T C_{\Delta \tilde{H}} |\nabla H(z_0)|^2 e^{-T_0/T}, \label{eq:boundH3}
\end{equation}
where
\begin{equation}
C_{\Delta \tilde{H}} := e(2.9+0.1 T_0) (1+e T_0/3), \quad |z_0|\leq \frac{r}{2} (1+3.63 L_\text{H} T (1+ e T_0/3))^{-1}. \nonumber
\end{equation}
\end{proposition}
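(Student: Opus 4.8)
The argument follows the backward‑error‑analysis construction of \citet[Ch.~VI]{Hairer}, adapted to exploit the separable structure $H(q,p)=|p|^2/2+f(q)$ and the corresponding symplectic Euler step $\Phi_T$. First I would build the formal modified Hamiltonian: seek a formal series $\tilde H = H + T H_1 + T^2 H_2 + \cdots$ whose time‑$T$ Hamiltonian flow reproduces $\Phi_T$, by expanding both sides in powers of $T$ and matching coefficients. This determines the $H_j$ recursively; the leading correction comes out explicitly as $H_1(q,p) = -\tfrac12\,\nabla f(q)\T p$, which fixes the displayed form $\tilde H = H - \tfrac T2 \nabla f(q)\T p + T^2 F$, with $F = \sum_{j\ge 2} T^{j-2} H_j$ the (still formal) tail. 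Because $f$ is analytic on $B_r^\text{c}$, each $H_j$ is analytic on a slightly smaller ball, and Cauchy's integral formula converts the recursion into quantitative bounds: on the ball of radius $r(1-j/N)$ one obtains $|H_j|$ and $|\nabla H_j|$ decaying geometrically in $j$ with ratio controlled by $L_\text{H}$, the Lipschitz constant of $\nabla H$, which plays the role of the natural inverse time scale. This is exactly the estimate that produces $T_0 = (2\ln 2 - 1)/(2eL_\text{H})$ as the geometric decay parameter.

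Second, I would perform the optimal truncation: cut the series at $N\approx T_0/T$ terms, so that $\tilde H := \tilde H_N$ is analytic on $B_{r/2}^\text{c}\times B_{r/2}^\text{c}$ and the defect between $\Phi_T$ and the time‑$T$ flow of $\tilde H_N$ is dominated by the first discarded term, which by the geometric estimate is $O(Te^{-T_0/T})$. A Gr\"onwall estimate for the flow difference over $[0,T]$ then yields $|(\tilde q(T),\tilde p(T))-\Phi_T(z_0)|\le TC_\text{E}|z_0|e^{-T_0/T}$, provided the trajectory $(\tilde q(t),\tilde p(t))$ stays inside $B_r^\text{c}\times B_r^\text{c}$ — this is the origin of the smallness condition \eqref{eq:condZ0}, the factor $e^{-10.2L_\text{H}T}$ absorbing the growth of the modified flow over $[0,T]$. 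It is important to track the increments here in terms of $|\nabla H(z_0)|$ rather than $|z_0|$, since this is what the final estimate \eqref{eq:boundH3} needs.

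Third, the structural properties. The gradient bounds \eqref{eq:BoundTildeH}, $|\nabla H-\nabla\tilde H|\le 15L_\text{H}T|\nabla H|$ and $|\nabla F|\le C_\text{F}|\nabla H|$ with $C_\text{F}=356L_\text{H}^2$, follow by summing the Cauchy estimates for the $\nabla H_j$ on $B_{r/2}^\text{c}\times B_{r/2}^\text{c}$ and using $|\nabla f(q)\T p|\le |\nabla H(z)|^2/2\le L_\text{H}|z|\,|\nabla H(z)|$ for the $H_1$ term. From this, $\tilde H$ shares the critical points of $H$: at such a point $\nabla f(q)=0$ and $p=0$, so $\nabla\big(\nabla f(q)\T p\big)=(\nabla^2 f(q)p,\ \nabla f(q))=0$, while $\nabla F$ vanishes there because $|\nabla F|\le C_\text{F}|\nabla H|$. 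To upgrade the gradient bounds to the value bounds \eqref{eq:BoundTildeH2} on $\mathcal A\cap(B_{r/2}^\text{c}\times B_{r/2}^\text{c})$, I would integrate along the gradient flow of $H$: since $\mathcal A$ is the connected component of $H^{-1}([0,\hat f))$ containing the origin and the origin is its only critical point, the forward gradient flow of $H$ from $z\in\mathcal A$ stays in $\mathcal A$ and converges to the origin, with $\int|\nabla H|\,|d\gamma|=\int|\nabla H|^2\,ds = H(z)$; integrating $\nabla H-\nabla\tilde H$ and $\nabla F$ along this path and using $H(0)=\tilde H(0)=F(0)=0$ gives $|H-\tilde H|\le 15L_\text{H}T|H|$ and $|F|\le C_\text{F}|H|$.

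Finally, the almost‑conservation estimate \eqref{eq:boundH3}. Along the modified flow \eqref{eq:modODE}, $\tilde H$ is exactly conserved, so $\tilde H(\tilde q(T),\tilde p(T))=\tilde H(z_0)$; hence $\tilde H(z_0)-\tilde H(\Phi_T(z_0)) = \tilde H(\tilde q(T),\tilde p(T))-\tilde H(\Phi_T(z_0))$, and the mean value theorem bounds the right‑hand side by $\sup_{\text{segment}}|\nabla\tilde H|\cdot|(\tilde q(T),\tilde p(T))-\Phi_T(z_0)|$. Inserting the defect estimate of the second step, expressed through $|\nabla H(z_0)|$, together with $\sup|\nabla\tilde H|\le(1+15L_\text{H}T)\sup|\nabla H|\lesssim|\nabla H(z_0)|$ (the supremum over a segment of length $O(T|\nabla H(z_0)|)$ around $z_0$, so Lipschitz continuity of $\nabla H$ applies), yields a bound $TC_{\Delta\tilde H}|\nabla H(z_0)|^2 e^{-T_0/T}$, and matching the geometric‑series constants gives $C_{\Delta\tilde H}=e(2.9+0.1T_0)(1+eT_0/3)$ together with the domain restriction on $|z_0|$ in \eqref{eq:boundH3}, which guarantees that $z_0$, $\Phi_T(z_0)$, the segment joining them, and the modified trajectory on $[0,T]$ all remain where the estimates hold. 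The conceptual skeleton — formal series, Cauchy estimates on shrinking balls, optimal truncation, integration along the gradient flow of $H$ — is standard; the genuine difficulty, and the main obstacle, is the bookkeeping: propagating \emph{explicit} constants through the recursion so that the decay parameter comes out exactly as $T_0=(2\ln2-1)/(2eL_\text{H})$ and the prefactors ($15$, $356$, $4.36e$, $e^2T_0/3$, $10.2$, \dots) are as claimed, while simultaneously verifying at every step that all invoked points stay inside the analyticity/estimate domains, which is precisely what dictates the various smallness conditions on $|z_0|$.
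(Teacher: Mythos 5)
Your proposal is correct and follows essentially the same route as the paper's proof: the recursively constructed modified Hamiltonian with leading correction $-\tfrac{T}{2}\nabla f(q)\T p$, Cauchy estimates on shrinking balls giving geometric growth governed by $T_0=(2\ln 2-1)/(2eL_\text{H})$, optimal truncation at $N\approx T_0/T$, exact conservation of $\tilde{H}$ along the modified flow combined with a Taylor/mean-value expansion for the bound \eqref{eq:boundH3}, and integration along the gradient flow of $H$ (with $\int \nabla H\T\dot{\gamma}\,\diff t = H(z)$) for the value bounds on $\mathcal{A}$. Two minor points: the paper makes the exponentially small defect rigorous by applying the maximum-modulus principle to the difference of the two maps viewed as analytic functions of a complex step size $\zeta$ on a disc of radius $\epsilon=eT_0/N$ (rather than your looser ``first discarded term plus Gr\"onwall'' phrasing, which is the same machinery once written out), and for ``same critical points'' you verify only that critical points of $H$ are critical points of $\tilde{H}$—the converse is immediate from your own estimate $|\nabla H-\nabla\tilde{H}|\leq 15L_\text{H}T|\nabla H|$, since $15L_\text{H}T<1$ for $T\leq T_0/3$.
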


\begin{proof}
We will complexify the position and momentum variables; i.e., we take $q\in \mathbb{C}^{n}$, $p\in \mathbb{C}^n$. Due to the fact that $f$ is analytic, it can be interpreted as a mapping from $\mathbb{C}^n$ to $\mathbb{C}$; a similar reasoning applies to $H$. Let $B_\rho^\text{c}\subset \mathbb{C}^{2n}$ be the closed ball of radius $\rho>0$, centered about the origin,  and let the norm $||\cdot ||_\rho$ be defined as 
\begin{equation}
||\bar{g}||_\rho:=\sup_{z\in B_\rho^\text{c}} |\bar{g}(z)|,
\end{equation}
for any analytic function $\bar{g}: \mathbb{C}^{2n} \rightarrow \mathbb{C}^{2n}$.

We define $f_1(q,p):=\Omega \partial H/\partial z=(p,-\nabla f(q))$, where $\Omega\in \mathbb{R}^{2n \times 2n}$ denotes the standard symplectic matrix, and introduce the following Ansatz for the differential equation \eqref{eq:modODE}:
\begin{equation}
\dot{\tilde{z}}(t)=f_1(\tilde{z}(t)) + f_2(\tilde{z}(t)) T + f_3(\tilde{z}(t)) T^2 + \dots,
\end{equation}
where $\tilde{z}(t):=(\tilde{q}(t),\tilde{p}(t))$ is a trajectory satisfying \eqref{eq:modODE}, and $f_i$ are analytic functions, $i=1,2,\dots$. Performing a Taylor expansion of $\tilde{z}(t)$ about $0$ and evaluating at $t=T$ yields, after rearranging terms, 
\begin{align}
\tilde{z}(T)&=\tilde{z}(0) + f_1(\tilde{z}(0)) T + (f_2(\tilde{z}(0)) + \frac{1}{2} D_1 f_1(\tilde{z}(0))) T^2 \\
&+ (f_3(\tilde{z}(0))+\frac{1}{3!} D_1^2 f_1(\tilde{z}(0)) + \frac{1}{2!} (D_2 f_1(\tilde{z}(0))+ D_1 f_2(\tilde{z}(0))) T^3 + \dots, \label{eq:tmpproof13}
\end{align}
where the Lie derivative $D_i: C^{\infty} \rightarrow C^{\infty}$ is defined by
\begin{equation}
D_i \bar{g}(z)= \left.\frac{\partial \bar{g}}{\partial z}\right|_{z} f_i(z), \quad \forall \bar{g}\in C^{\infty},
\end{equation}
where $C^{\infty}$ denotes the set of infinitely differentiable functions mapping from $\mathbb{C}^{2n}$ to $\mathbb{C}^{2n}$ \citep[see also][Ch.~IX]{Hairer}. The notation on the left-hand side of the previous equation should be read in the following order: The operator $D_i$ is applied to $\bar{g}$, yielding the function $D_i \bar{g}$, which is then evaluated at $z$. Applying the operator $D_i$ to the function $D_i \bar{g}$ is denoted by $D_i^2 \bar{g}$.

We require $\tilde{z}(T)=(q_{k+1},p_{k+1})$, which leads due to \eqref{eq:sympEuler} to
\begin{equation}
\tilde{z}(T)=\tilde{z}(0) + f_1(\tilde{z}(0)) T + \left( \begin{array}{c} -\nabla f(\tilde{q}(0))\\ 0 \end{array} \right) T^2.\label{eq:tmpproof12}
\end{equation}
Equating equal powers of $T$ in \eqref{eq:tmpproof13} and \eqref{eq:tmpproof12} yields the following recursive scheme for computing the functions $f_i$:
\begin{align}
f_2(z)&=(-\nabla f(q), 0 )\T-\frac{1}{2!} D_1 f_1 (z),\\
f_j(z)&= - \sum_{i=2}^{j} \frac{1}{i!} \sum_{k_1+\dots+k_i=j} D_{k_1} D_{k_2} \dots D_{k_{i-1}} f_{k_i}(z), \quad j>2,\label{eq:seqDef}
\end{align}
where the last sum ranges over all strictly positive integers $k_1,\dots, k_i$ that sum up to $j$. We will construct the perturbed Hamiltonian $\tilde{H}$ by appropriately truncating the series $\sum f_i(z)T^i$, and will show that the resulting truncated series has the desired properties.

We start by explicitly computing the function $f_2$:
\begin{equation}
f_2(z)=\left( \begin{array}{c} - \nabla f(q) \\ 0 \end{array} \right) - \frac{1}{2} \left( \begin{array}{cc} 0 & I\\ -\frac{\diff^2 f}{\diff x^2} &0 \end{array}\right) \left( \begin{array}{c} p \\ -\nabla f(q) \end{array} \right) = \underbrace{\frac{1}{2} \left( \begin{array}{cc} 0 & I \\ \frac{\diff^2 f}{\diff x^2} & 0\end{array}\right)}_{:=A_2(z)} f_1(z), \label{eq:defA2}
\end{equation}
and note that it is of the form $A_2(z) f_1(z)$, where $A_2(z)\in \mathbb{C}^{2n\times 2n}$. We further note that $f_2(z)$ is in fact a Hamiltonian vector field with corresponding Hamiltonian $-p\T \nabla f(q)/2$. From an induction argument relying on \eqref{eq:seqDef} and the definition of the Lie derivative, it follows that each $f_j$ can be written as $f_j(z)= A_j(z) f_1(z)$, where $A_1(z)$ is the identity, $A_2(z)$ is defined in \eqref{eq:defA2}, and
\begin{equation}
A_j(z):=-\sum_{i=2}^{j} \frac{1}{i!} \sum_{k_1+\dots + k_i=j} \frac{\partial}{\partial z}(D_{k_{2}} \dots D_{k_{i-1}} f_{k_i})\Big\vert_z A_{k_1}(z), \quad j>2.\label{eq:defA}
\end{equation}
Moreover, as shown in \citet[p.~295,Theorem~3.2]{Hairer}, the vector fields $f_j$ are guaranteed to be Hamiltonian for all $j\geq 1$. 

We will rely on the assumption that $f$ is analytic for bounding the functions $A_j$ using Cauchy's integral formula. More precisely, Cauchy's integral formula provides us with the following bound on $\partial \bar{g}/\partial z$ for any function $\bar{g}$ analytic in $B_r^\text{c}$:
\begin{equation}
\left\vert \left.\frac{\partial \bar{g}}{\partial z}\right|_{z} \right\vert \leq \frac{||\bar{g}||_{\rho}}{\delta}, \quad \forall z\in B_{\rho-\delta}^\text{c}, \qquad \left\vert\left\vert \frac{\partial \bar{g}}{\partial z} \right\vert\right\vert_{\rho-\delta} \leq \frac{||\bar{g}||_{\rho}}{\delta}, \label{eq:Cauchy}
\end{equation}
for any two constants $\rho,\delta$ such that $0\leq \delta < \rho\leq r$. A similar argument yields the following bound on the Lie-derivative of a function $\bar{g}$ analytic in $B_r^\text{c}$ \citep[p.~308]{Hairer},
\begin{equation}
\left\vert\left\vert D_i \bar{g} \right\vert\right\vert_{\sigma} \leq \frac{1}{\rho-\sigma} ||f_i||_{\sigma} ||\bar{g}||_{\rho},
\end{equation}
where $0\leq \sigma < \rho \leq r$. In the following these two inequalities are used to bound the right-hand side of \eqref{eq:defA}.

\emph{Claim:} The function $A_j$ is bounded above by 
\begin{equation}
||A_j||_{r/2} \leq \frac{\text{ln}(2)}{2\text{ln}(2)-1} \left( \frac{2 L_\text{H} (j-1)}{2 \text{ln}(2)-1}\right)^{j-1},
\end{equation}
for all $j\geq 1$.

\emph{Proof of the claim:}
Explicit calculations show that the claim holds for $j=1$. We thus fix the index $J>1$ and estimate $||A_j||_{r-(j-1)\delta}$ for all $1\leq j\leq J$, where $\delta=r/(2(J-1))$ is fixed, which, for $j=J$, yields the desired bound on $A_J$. To simplify notation we denote $||\cdot||_{r-(j-1)\delta}$ as $||\cdot||_{j}$. The right-hand side of \eqref{eq:defA} is upper bounded by
\begin{align*}
||\frac{\partial}{\partial z}(D_{k_{2}} \dots D_{k_{i-1}} f_{k_i})\Big\vert_z A_{k_1}||_j &\leq \frac{1}{\delta}||D_{k_2} \dots D_{k_{i-1}} f_{k_i}||_{j-1} ||A_{k_1}||_j\\
&\leq \frac{1}{\delta^2} ||D_{k_3} \dots D_{k_{i-1}} f_{k_i}||_{j-2} ||A_{k_2}||_{j-1} ||f_1||_{j-1} ||A_{k_1}||_j\\
&\leq \frac{1}{\delta^{i-1}} ||A_{k_i}||_{j-(i-1)} ||f_{1}||_{j-(i-1)} \dots ||A_{k_2}||_{j-1} ||f_1||_{j-1} ||A_{k_1}||_j.
\end{align*}
Due to the fact that $k_1+ \dots + k_i=j$ and $k_1>0, \dots k_i>0$, it follows $k_i \leq j-(i-1)$, which implies $||\bar{g}||_{j-(i-1)} \leq ||\bar{g}||_{k_i}$ for all $i\leq j$ and for any function $\bar{g}$ analytic on $B_r^\text{c}$. Therefore, the above bound reduces to
\begin{align}
||\frac{\partial}{\partial z}(D_{k_{2}} \dots D_{k_{i-1}} f_{k_i})\Big\vert_z A_{k_1}||_j &\leq \frac{1}{\delta^{i-1}} ||A_{k_i}||_{k_i} ||f_{1}||_{k_i} \dots ||A_{k_2}||_{k_2} ||f_1||_{k_2} ||A_{k_1}||_{k_1}\\
&\leq \left(\frac{||f_1||_r}{\delta}\right)^{i-1} ||A_{k_i}||_{k_i} \dots ||A_{k_1}||_{k_1}.
\end{align}
By introducing the constant $\hat{\delta}:=\delta/r$, which, by definition of $\delta$ satisfies $\hat{\delta}\leq 1/2$, we can reformulate the above bound as
\begin{equation}
||\frac{\partial}{\partial z}(D_{k_{2}} \dots D_{k_{i-1}} f_{k_i})\Big\vert_z A_{k_1}||_j\leq  \left(\frac{L_\text{H}}{\hat{\delta}}\right)^{i-1} ||A_{k_i}||_{k_i} \dots ||A_{k_1}||_{k_1}. \label{eq:tmp122}
\end{equation}
By exploiting the structure of $A_2(z)$ it can be verified that 
\begin{equation}
||A_2||_2 = \frac{1}{2} \max\{1,||\diff^2 f/\diff x^2||_2\}= \frac{1}{2} L_\text{H}\leq \frac{L_\text{H}}{2\hat{\delta}},\label{eq:tmp123}
\end{equation}
which will simplify the following development. Following \citet[p.~309]{Hairer}, we introduce the constants $b_j$ for all $j\geq 1$ in the following way
\begin{align*}
b_1=\frac{L_\text{H}}{\hat{\delta}}, \quad b_2=\frac{L_\text{H}^2}{\hat{\delta}^2},\quad 
b_j=\sum_{i=2}^{j} \frac{1}{i!} \sum_{k_1+\dots + k_i=j} b_{k_1} b_{k_2} \dots b_{k_i},\quad j>2.
\end{align*}
It can be verified with an induction argument taking \eqref{eq:tmp122} and \eqref{eq:tmp123} into account, that 
\begin{equation}
||A_j||_j \frac{L_\text{H}}{\hat{\delta}} \leq b_j, \quad 1\leq j\leq J.
\end{equation}
The variables $b_j$ are well-defined for any $j\geq 1$ and not just $1\leq j\leq J$. In order to bound the constants $b_j$, we formally introduce the generating function $b(\zeta)$, $b: \mathbb{C} \rightarrow \mathbb{C}$, $\zeta \in \mathbb{C}$, and note that $b$ satisfies
\begin{align}
b(\zeta):=\sum_{j=1}^{\infty} b_j \zeta^j&=b_1 \zeta +  \sum_{j=2}^{\infty} \zeta^j \sum_{i=2}^{j} \frac{1}{i!} \sum_{k_1+\dots+k_i=j} b_{k_1} \dots b_{k_i}  \label{eq:prooftmptmp}\\
&=b_1 \zeta + \sum_{i=2}^{\infty} \frac{b(\zeta)^i}{i!} \\
&=b_1 \zeta + e^{b(\zeta)} - 1 - b(\zeta),
\end{align}
where the order of summation has been interchanged to arrive at \eqref{eq:prooftmptmp}.
It can be verified by means of the implicit function theorem that, given any $v\in \mathbb{C}$, the equation $2b-\exp(b)+1=v$ can be uniquely solved for $b$ as long as $\exp(b)\neq 2$. This implies that $b(\zeta)$ is well-defined and analytic for $|b_1 \zeta|\leq 2 \text{ln}(2)-1$, or equivalently, $|\zeta|\leq (2 \text{ln}(2)-1)/b_1$. Furthermore, explicit calculations show that $|b(\zeta)|$ is bounded by $\text{ln}(2)$ for $|\zeta|\leq (2 \text{ln}(2)-1)/b_1$ \cite[see][p.~310]{Hairer}. Cauchy's inequality therefore implies
\begin{equation}
|b_j|\leq \frac{\text{ln}(2)}{ ((2\text{ln}(2)-1)/b_1)^{j}}=\text{ln}(2) \left(\frac{L_\text{H}}{\hat{\delta}(2 \text{ln}(2)-1)}\right)^j, \quad \forall j\geq 1.
\end{equation}
Evaluating the above bound for $j=J$ yields 
\begin{equation}
||A_J||_J \leq \frac{\text{ln}(2)}{2 \text{ln}(2)-1} \left( \frac{2L_\text{H} (J-1)}{2 \text{ln}(2)-1}\right)^{J-1},
\end{equation}
and proves the claim.

We define 
\begin{equation}
\Omega \nabla \tilde{H}(z):=\sum_{j=1}^{N} f_j(z) T^{j-1}, \quad z\in B_{r/2}^\text{c},
\end{equation}
where $N$ is chosen to be the largest integer such that $N T \leq T_0$, with $T_0:=(2\text{ln}(2)-1)/(2eL_\text{H})$. The choice for the integer $N$ is motivated by the following observation: Due to the bound on $||A_j||_{r/2}$ the terms in the above sum behave as $(Tj/(eT_0))^j$, which attains a minimum for $j\approx T_0/T$. The fact that all $f_j$, $j\geq 1$, are Hamiltonian vector fields proves \eqref{eq:HamForm2}.

As a result, we obtain for $z\in B_{r/2}^\text{c}$ and $T\leq T_0$ (implying $T/T_0\leq 1/N$ by our choice of $N$)
\begin{align}
|\nabla \tilde{H}(z)-\nabla H(z)|&\leq \sum_{j=2}^{N} |f_j(z)| T^{j-1} \leq \frac{\text{ln}(2)}{2\text{ln}(2)-1} |\nabla H(z)| \sum_{j=2}^{N} \left( \frac{T (j-1)}{e T_0} \right)^{j-1} \nonumber\\
&\leq \frac{\text{ln}(2)}{2\text{ln}(2)-1} \frac{T}{T_0} |\nabla H(z)| \sum_{j=2}^{N} \left(\frac{j-1}{e}\right)^{j-1} \left(\frac{T}{T_0}\right)^{j-2} \nonumber\\
&\leq \frac{\text{ln}(2)}{2\text{ln}(2)-1} \frac{T}{T_0} |\nabla H(z)| \underbrace{\sum_{j=1}^{N-1} (j/e)^j (1/N)^{j-1}}_{\leq 0.588} \nonumber
\\
&\leq 15 L_\text{H} T |\nabla H(z)|,\label{eq:boundAbove}
\end{align}
which proves the first bound in \eqref{eq:BoundTildeH}. The second bound is proved analogously; that is,
\begin{align}
|\nabla F(z)|&\leq \frac{1}{T^2} \sum_{j=3}^{N} |f_j(z)| T^{j-1} \\
&\leq \frac{\text{ln}(2)}{(2\text{ln}(2)-1)T_0^2} |\nabla H(z)| \sum_{j=3}^{N} \left(\frac{j-1}{e}\right)^{j-1} \left(\frac{T}{T_0}\right)^{j-3}\\
&\leq \frac{\text{ln}(2)}{(2\text{ln}(2)-1)T_0^2} |\nabla H(z)| \underbrace{\sum_{j=2}^{N-1} \left(\frac{j}{e}\right)^{j} \left(\frac{1}{N}\right)^{j-2}}_{\leq 1}\\
&\leq 356 L_\text{H}^2 |\nabla H(z)|.
\end{align}
The fact that $\tilde{H}$ has the same critical points is derived from the following observation. First, all critical points of $H$ are necessarily critical points of $\tilde{H}$, which follows from \eqref{eq:boundAbove} with $\nabla H(z)=0$, and second, $\nabla \tilde{H}(\hat{z})=0$ implies, according to \eqref{eq:boundAbove}, that $|\nabla H(\hat{z})|\leq 15 L_\text{H} T |\nabla H(\hat{z})| \leq 0.36 |\nabla H(\hat{z})|$ (for $T\leq T_0/3$), concluding $\nabla H(\hat{z})=0$. 

The first bound in \eqref{eq:BoundTildeH2} is derived by noticing that
\begin{equation}
\tilde{H}(z)-H(z) = \int_{0}^{1} (\nabla \tilde{H}(\gamma(t)) - \nabla H(\gamma(t)))\T \dot{\gamma}(t) \diff t,
\end{equation}
for any path $\gamma: [0,1] \rightarrow \mathbb{C}^{2n}$ that connects the origin with $z$; i.e., $\gamma(0)=0$, $\gamma(1)=z$. We choose $\gamma(t)$ such that $\nabla H(\gamma(t))\T \dot{\gamma}(t) = |\nabla H(\gamma(t))| |\dot{\gamma}(t)|$. This can be done by considering the gradient flow $\tilde{\gamma}(t)$,
\begin{equation}
\dot{\tilde{\gamma}}(t) = -\nabla H(\tilde{\gamma}(t)), \quad \tilde{\gamma}(0)=z,
\end{equation}
which is guaranteed to converge to the origin for any $z\in \mathcal{A}_f \cap (B_{r/2}^\text{c}\times B_{r/2}^\text{c})$. As a result, choosing $\gamma(t)=\tilde{\gamma}(1/t-1)$ leads to 
\begin{align}
|\tilde{H}(z)-H(z)|&\leq \int_{0}^{1} |\nabla \tilde{H}(\gamma(t)) - \nabla H(\gamma(t))| |\dot{\gamma}(t)| \diff t \nonumber \\
&\leq 15 L_\text{H} T \int_{0}^{1} |\nabla H(\gamma(t))| |\dot{\gamma}(t)| \diff t \nonumber \\
&=15 L_\text{H} T \int_{0}^{1} \nabla H(\gamma(t))\T \dot{\gamma}(t) \diff t
= 15 L_\text{H}T H(z).
\end{align}
The same argument (with the same path) applies to $|\nabla F(z)|$, which yields the second bound in \eqref{eq:BoundTildeH2}.

We show next that the solution of \eqref{eq:modODE} is exponentially close to the iterate \eqref{eq:sympEuler}. We fix $T$, $z_0\in B_{r/2}^\text{c}$ such that
\begin{equation}
|z_0|\leq r e^{-10.2 L_\text{H} T}/2 \label{eq:tmpproof121}
\end{equation}
(the choice becomes evident), and as above, $N$ to be the largest integer such that $NT \leq T_0$. For small enough $\zeta \in \mathbb{C}$, we denote the map from $\tilde{z}(0)=z_0$ to $\tilde{z}(t=\zeta)$, where $\tilde{z}(t)$ satisfies $\dot{\tilde{z}}(t)=\sum_{j=1}^{N} f_j(\tilde{z}(t)) \zeta^j$, by $\tilde{\varphi}_{\zeta}: \mathbb{C}^{2n} \rightarrow \mathbb{C}^{2n}$. The symplectic Euler step $\Phi_\zeta(z_0)$ and $\tilde{\varphi}_\zeta(z_0)$ are both analytic in $\zeta$ and by the above construction, their Taylor expansion in $\zeta$ about $\zeta=0$ agrees for the first $N$ terms ($z_0$ is fixed). 
Thus, the function $\tilde{g}(\zeta)/\zeta^{N+1}$, where $\tilde{g}(\zeta):=\Phi_\zeta(z_0)-\tilde{\varphi}_\zeta(z_0)$ is analytic. Applying the maximum modulus principle implies
\begin{equation}
\frac{|\tilde{g}(T)|}{T^{N+1}} \leq \max_{|\zeta|\leq \epsilon} \frac{|\tilde{g}(\zeta)|}{|\epsilon|^{N+1}} = \max_{|\zeta|=\epsilon} \frac{|\tilde{g}(\zeta)|}{\epsilon^{N+1}} = \frac{1}{\epsilon^{N+1}} \max_{|\zeta|=\epsilon} |\tilde{g}(\zeta)|, \label{eq:maxPrinciple}
\end{equation}
where $\epsilon$ is chosen as $\epsilon=eT_0/N$ (again, the choice becomes evident subsequently). It remains to bound $|\tilde{g}(\zeta)|$ for $|\zeta|=\epsilon$, which will be done by estimating $|\Phi_\zeta(z_0)-z_0|$ and $|\tilde{\varphi}_\zeta(z_0)-z_0|$ separately. For the first term we obtain (by definition of \eqref{eq:tmpproof12})
\begin{equation}
|\Phi_\zeta(z_0)-z_0|\leq |\zeta| |f_1(z_0)| (1 + |\zeta|)\leq \epsilon |f_1(z_0)| (1+\epsilon)\leq \epsilon (1+e T_0/3)L_\text{H} |z_0|, \label{eq:tmpproof1211}
\end{equation}
where $N\geq 3$ has been used for the last inequality. For bounding the second term we first derive a bound on $\nabla \tilde{H}(z)$, for all $z\in B_{r/2}^\text{c}$, similar to \eqref{eq:boundAbove} (unlike \eqref{eq:boundAbove} we have $\epsilon=eT_0/N$). We obtain
\begin{align}
|\nabla \tilde{H}(z)| &\leq  |\nabla H(z)| \left(1+\sum_{j=2}^{N} ||A_j||_{r/2} \epsilon^{j-1}\right) \nonumber\\
&\leq |\nabla H(z)| \left(1+\frac{\text{ln}(2)}{2\text{ln}(2)-1} \sum_{j=2}^{N} \left( \frac{2L_\text{H}\epsilon(j-1)}{2\text{ln}(2)-1} \right)^{j-1}\right)\nonumber\\
&= |\nabla H(z)| \left(1+\frac{\text{ln}(2)}{2\text{ln}(2)-1} \sum_{j=1}^{N-1} (j/N)^j \right)\leq 2.8 |\nabla H(z)|,
\end{align}
for all $z\in B_{r/2}^\text{c}$. The gradient of $\nabla \tilde{H}$ has therefore a Lipschitz constant of less than 2.8$L_\text{H}$ for $z\in B_{r/2}^\text{c}$, which implies that 
\begin{equation}
|\tilde{\varphi}_{\epsilon}(z_0)|\leq |z_0| e^{2.8 L_\text{H} \epsilon} \leq |z_0| e^{10.2 L_\text{H}T},
\end{equation}
as long as the corresponding trajectory stays within $B_{r/2}^\text{c}$. Note that the last inequality stems from the fact that $T_0<T(N+1)$ and thus $\epsilon<e T (N+1)/N \leq 4eT/3$. Due to the condition \eqref{eq:tmpproof121}, the trajectory $\tilde{\varphi}_\epsilon(z_0)$ necessarily stays within $B_{r/2}^\text{c}$. Moreover, from $\epsilon \leq e T_0/3$ ($N\geq 3$ since $T\leq T_0/3$), it can be inferred that 
\begin{equation}
|\tilde{\varphi}_\epsilon(z_0)| \leq |z_0| e^{2.8 e T_0 L_\text{H}/3} \leq 1.2 |z_0|,
\end{equation}
which implies that $|\tilde{\varphi}_\epsilon(z_0) - z_0|\leq 3.36 \epsilon L_\text{H} |z_0|$.
Combined with \eqref{eq:tmpproof1211}, this leads to $|\tilde{g}(\zeta)|\leq \epsilon \tilde{C}_\text{g}$ for $|\zeta|=\epsilon$ and $\tilde{C}_\text{g}:=(4.36+eT_0/3)L_\text{H}|z_0|$, which, according to \eqref{eq:maxPrinciple}, implies
\begin{align}
|\tilde{g}(T)|&\leq \epsilon \tilde{C}_\text{g} \left(\frac{T}{\epsilon}\right)^{N+1}\leq T \tilde{C}_\text{g} \left(\frac{T}{\epsilon}\right)^{N} \leq T \tilde{C}_\text{g} \left( \frac{N T}{e T_0} \right)^{N} \leq  T \tilde{C}_\text{g} e^{-N} \nonumber\\
&=  T \tilde{C}_\text{g} e e^{-(N+1)} \leq T \tilde{C}_\text{g} e e^{-T_0/T}. \label{eq:tmpproof1222}
\end{align}
The fact that $N\leq T_0/T < N+1$ has been used to obtain the last inequality. The above result justifies the choice $\epsilon=eT_0/N$.

We derive the bound on $\tilde{H}(\tilde{\varphi}_T(z_0))-\tilde{H}(\Phi_T(z_0))$ in a similar way. Consider the analytic function $\hat{g}(\zeta)=\tilde{H}(\tilde{\varphi}_\zeta(z_0))-\tilde{H}(\Phi_\zeta(z_0))$, where $z_0\in B_{r/2}^\text{c}$, $T$, and $N$ are fixed. The modified Hamiltonian $\tilde{H}$ is dependent on $\zeta$ and has the form
\begin{equation}
\tilde{H}(z)=\sum_{j=1}^{N} H_j(z) \zeta^{j-1},
\end{equation} 
where $H_1=H$, and $\Omega \nabla H_j(z)=f_j(z)$ for all $j$ with $1\leq j \leq N$.
The function $\hat{g}(\zeta)/(\zeta^{N+1})$ is analytic, as can be seen, for example, by a Taylor expansion of $\tilde{H}$ in $z$ about $z=\Phi_\zeta(z_0)$, where $\tilde{\varphi}_\zeta(z_0)$ and $\Phi_\zeta(z_0)$ agree up to $N$th order. We invoke the maximum principle, similar to \eqref{eq:maxPrinciple}, which implies,
\begin{equation}
\frac{|\hat{g}(T)|}{T^{N+1}} \leq \frac{1}{\epsilon^{N+1}} \max_{|\zeta|=\epsilon} |\hat{g}(\zeta)|,
\end{equation}
and where $\epsilon$ is again set to $\epsilon=eT_0/N$. A bound for $\hat{g}(\zeta)$ is obtained in the following way (applying Taylor's theorem):
\begin{align}
|\tilde{H}(\tilde{\varphi}_\zeta(z_0))-\tilde{H}(\Phi_\zeta(z_0))|&=|\tilde{H}(z_0)-\tilde{H}(\Phi_\zeta(z_0))| \\
&\leq |\nabla \tilde{H}(z_0)| |\Phi_\zeta(z_0)-z_0| + \frac{1}{2} \left\vert \left.\frac{\partial^2 \tilde{H}}{\partial z^2}\right|_{\eta} \right\vert |\Phi_\zeta(z_0)-z_0|^2,\label{eq:prooftmptaylor}
\end{align}
where $\eta$ lies between $\Phi_{\zeta}(z_0)$ and $z_0$. The gradient of $\tilde{H}$ is necessarily bounded by $|\nabla \tilde{H}(z)| \leq 2.8 L_\text{H} |z|$, which therefore implies that the Hessian of $\tilde{H}$ in the expression is bounded by $2.8 L_\text{H}$ provided that $\eta$ is in $B_{r/2}^\text{c}$. The bound \eqref{eq:tmpproof1211} implies that this is the case for 
\begin{equation}
|z_0|\leq \frac{r}{2} (1+ 4 L_\text{H} eT (1+e T_0/3)/3)^{-1},
\end{equation}
where $\epsilon \leq eT_0/3$ and $\epsilon < 4eT/3$ have been used. 
Combining \eqref{eq:prooftmptaylor} with the bound from \eqref{eq:tmpproof1211} yields
\begin{align}
|\tilde{H}(\tilde{\varphi}_\zeta(z_0))-\tilde{H}(\Phi_\zeta(z_0))|&\leq 2.8 \epsilon (1+ eT_0/3) |\nabla H(z_0)|^2 (1+ L_\text{H} \epsilon/2 (1+eT_0/3))\\
&\leq \epsilon |\nabla H(z_0)|^2 (2.8 + 0.1 (1+eT_0/3))(1+eT_0/3)\\
&\leq \epsilon |\nabla H(z_0)|^2 (2.9 + 0.1 T_0)(1+eT_0/3).
\end{align}
Applying the same chain of arguments as in \eqref{eq:tmpproof1222} allows us to conclude:
\begin{equation}
|\hat{g}(T)|\leq (2.9+0.1 T_0)(1+eT_0/3) e T e^{-T_0/T} |\nabla H(z_0)|^2.
\end{equation}
\end{proof}



\section{Proof of Proposition~\ref{Prop:StabilityDT}}\label{App:ProofDTStab}
\begin{proof}
We choose $r>0$ such that $A \subset B_r^{\text{c}} \times B_r^{\text{c}}$, where $B_r^\text{c}$ is the closed $n$-dimensional ball of radius $r$ centered at the origin. We use the Stone-Weierstrass theorem \citep[p.~159]{RudinAnalysis} to approximate $\diff^2 f/\diff x^2$ on $B_{2r}^{\text{c}}$ by $\diff^2 \tilde{f}/\diff x^2$ such that $|\diff^2 \tilde{f}/\diff x^2 -\diff^2 f/\diff^2 x|\leq \epsilon/(2r)$ for all $x\in B_{2r}^{\text{c}}$ and 
\begin{equation*}
\left.\frac{\diff^2 \tilde{f}}{\diff x^2}\right|_{x=0}=\left.\frac{\diff^2 f}{\diff x^2}\right|_{x=0},
\end{equation*}
where $\epsilon>0$ will be chosen subsequently. We integrate $\diff^2 \tilde{f}/\diff x^2$ and impose $\nabla \tilde{f}(0)=0$, which yields the function $\nabla \tilde{f}$ on $B_{2r}^\text{c}$ that is $\epsilon$-close to $\nabla f$. We integrate once more and impose $\tilde{f}(0)=0$. Moreover, by choosing $\epsilon$ small enough, the origin is guaranteed to be the only critical point of $H(z)$ (with $f$ replaced with $\tilde{f}$) for all $z\in A$, since $A\subset \mathcal{A}_f$ and $A\subset B_r^\text{c}\times B_r^\text{c}$. Due to the fact that $\tilde{f}$ is a polynomial and therefore analytic, we can invoke Proposition~\ref{Prop:ModEnergyShort} with $f$ replaced by $\tilde{f}$. Next, we will show that the origin is an asymptotically stable equilibrium of the dynamics \eqref{eq:exDT}, where $f$ is replaced by $\tilde{f}$, with region of attraction at least $A$.

In order to simplify notation, we reformulate the contraction step $\Phi_{d,T}$ (with $f$ replaced by $\tilde{f}$) in the following way:
\begin{equation}
\Phi_{d,T}(q_k,p_k)=\left( \begin{array}{c} q_k \\ (I-T\Lambda(q_k,p_k)) p_k \end{array} \right),
\end{equation}
where by assumption $\Lambda(q_k,p_k)$ is symmetric and positive definite. Due to the assumption on the \hchange{dissipative forces}, the singular values of $\Lambda(q_k,p_k)$ are upper and lower bounded by $d_2$ and $d_1$, respectively. Provided that the time step $T$ is small enough, $T\leq T_\text{max}$, we conclude from Proposition~\ref{Prop:ModEnergyShort} that there exists a modified energy function $\tilde{H}$ that has the same critical points as $H$, and such that the bound \eqref{eq:boundH3Short} holds for all $z_0 \in B_{r}^\text{c}\times B_{r}^\text{c}$. Morse theory implies that $\tilde{H}(z)$ is locally positive definite and has compact level sets for $z\in A$. The same applies therefore to the function
\begin{equation}
V(q_k,p_k):=\tilde{H}(q_k,p_k) + \frac{T d_1}{2} \nabla \tilde{f} (q_k)\T p_k,
\end{equation}
provided that $T\leq T_{\text{max}}$, where $T_\text{max}$ is chosen to be small enough (independently of $f_\text{d}$; this can be done since the positive definiteness of $\tilde{H}$ is not affected by the \hchange{dissipative forces} $f_\text{d}$).

The stability analysis simplifies when analyzing $\Phi_{d,T} \circ \Phi_T$ instead of $\Phi_T \circ \Phi_{d,T}$, due to the simple structure of $\Phi_{d,T}$. In order to simplify the notation we hide all constants that may depend on $d_2/d_1$, an upper bound on $d_2$, $L_\text{H}$, or higher orders of $T$ with the notation $\mathcal{O}(\cdot)$. It will be shown that $V(q_k,p_k)$ necessarily decays along the step $\Phi_{d,T} \circ \Phi_T$. To that extent, we define $z_2=\Phi_{d,T}(z_1)$ and $z_1=\Phi_T(z_0)$ and consider $V(z_2)-V(z_0)$, which, due to the fact that $q_2=q_1$, is bounded by
\begin{align*}
V(q_2,p_2)&=\frac{1}{2} |p_2|^2 - \frac{T}{2} \nabla \tilde{f}(q_2)\T (I-d_1 I) p_2 + \tilde{f}(q_2) + T^2 F(q_2,p_2)\\
&\leq \frac{1}{2} |p_1|^2 - T p_1\T \Lambda p_1 - \frac{T}{2} \nabla \tilde{f}(q_1)\T (I- d_1 I) p_1+\tilde{f}(q_1) + T^2 F(q_1,p_2)\\
&\hspace{3cm}+\frac{T^2}{2} \nabla \tilde{f}(q_1)\T (I-d_1 I) \Lambda p_1 + \mathcal{O}(d_1^2 T^2) |p_1|^2,\\
&\leq \tilde{H}(q_1,p_1)- T p_1\T \Lambda p_1 + \frac{T d_1}{2} \nabla \tilde{f}(q_1)\T p_1 \\
&\hspace{3cm}+\mathcal{O}(d_1 T^2) |p_1|^2 + \mathcal{O}(d_1 T^2)|\nabla \tilde{f}(q_1)| |p_1|,
\end{align*}
where the arguments of $\Lambda$ have been dropped and the results from Proposition~\ref{Prop:ModEnergyShort} have been used, which enable the following bound:
\begin{align*}
F(q_2,p_2)-F(q_1,p_1)&\leq \nabla F(q_1,p_1)\T (p_2-p_1) + \frac{1}{2} |p_2-p_1|^2 \sup_{z\in B_r} |\frac{\diff^2 F}{\diff z^2}|\\
&\leq 356 L_\text{H}^2 T |\nabla H(q_1,p_1)| |\Lambda p_1| + 178 L_\text{H}^3 T^2 |\Lambda p_1|^2 \\
&\leq \mathcal{O}(d_1 T) (|p_1|^2 +  |p_1| |\nabla \tilde{f}(q_1)|) + \mathcal{O}(T^2 d_1^2) |p_1|^2.
\end{align*}
Due to Proposition~\ref{Prop:ModEnergyShort}, it follows that $\tilde{H}(\Phi_{T}(z_0))-\tilde{H}(z_0)$ is bounded by a term of the order $|\nabla H(z_0)|^2 T e^{-T_0/T}$. This yields
\begin{align*}
V(q_2,p_2) \leq&\ \tilde{H}(q_0,p_0) - T p_1\T \Lambda p_1 + \frac{Td_1}{2} \nabla \tilde{f}(q_1)\T p_1 + |\nabla H(z_0)|^2 \mathcal{O}(T e^{-T_0/T})\\
& + \mathcal{O}(d_1 T^2) |p_1|^2 + \mathcal{O}(d_1 T^2)|\nabla \tilde{f}(q_1)| |p_1|\\
\leq&\ V(q_0,p_0) - \frac{Td_1}{2} \nabla \tilde{f}(q_0)\T p_0 - T p_1\T \Lambda p_1 +\frac{Td_1}{2}\nabla \tilde{f}(q_1)\T p_1 + |\nabla H(z_0)|^2 \mathcal{O}(T e^{-T_0/T})\\
&+ \mathcal{O}(d_1 T^2) |p_1|^2 +\mathcal{O}(d_1 T^2) |\nabla \tilde{f}(q_1)| |p_1|.
\end{align*}
Due to the continuity of $\nabla \tilde{f}$ we can bound $|\nabla \tilde{f}(q_1)-\nabla \tilde{f}(q_0)|$ by $T (L+\epsilon) |p_1|$, where $L$ is the Lipschitz constant of $\nabla f$. Moreover, $L_\text{H}$ is an upper bound on $L$. This implies
\begin{align*}
V(q_2,p_2)&\leq V(q_0,p_0) - T p_1\T \Lambda p_1 -\frac{T^2 d_1}{2} |\nabla \tilde{f}(q_0)|^2 + |\nabla H(z_0)|^2 \mathcal{O}(T e^{-T_0/T}) + \mathcal{O}(d_1 T^2) |p_1|^2\\
&\hspace{3cm}+\mathcal{O}(d_1 T^2) |\nabla \tilde{f}(q_1)||p_1|,\\
&\leq V(q_0,p_0) - \frac{T}{2} p_1\T \Lambda p_1 -\frac{T^2 d_1}{4} |\nabla \tilde{f}(q_0)|^2 + |\nabla H(z_0)|^2 \mathcal{O}(T e^{-T_0/T}) \\
&\hspace{3cm}+ \mathcal{O}(d_1 T^2) |\nabla \tilde{f}(q_0)||p_0|,
\end{align*}
provided that $T$ is chosen small enough such that $-p_1\T \Lambda p_1 T/2$ dominates the $\mathcal{O}(d_1 T^2) |p_1|^2$ terms and $-T^2 d_1 |\nabla \tilde{f}(q_0)|^2/4$ dominates the $\mathcal{O}(d_1 T^3) |\nabla \tilde{f}(q_0)|^2$ terms. Expanding the term $-T p_1\T \Lambda p_1/2$ and applying Young's inequality,
\begin{equation*}
C_\text{Y} d_1 T^2  p_0\T \nabla \tilde{f}(q_0) \leq \left( \frac{\sqrt{T}}{\sqrt{2}} \sqrt{d_1} |p_0| \right) \left( C_\text{Y} \sqrt{T}^3 \sqrt{2} \sqrt{d_1} |\nabla \tilde{f}(q_0)| \right) \leq \frac{T}{4} d_1 |p_0|^2 + C_\text{Y}^2 T^3 d_1 |\nabla \tilde{f}(q_0)|^2,
\end{equation*}
where $C_\text{Y}>0$ is an arbitrary constant, results in
\begin{align*}
V(q_2,p_2)-V(q_0,p_0) &\leq -\frac{T}{4} d_1 |p_0|^2 -\frac{T^2 d_1}{8} |\nabla \tilde{f}(q_0)|^2 + |\nabla H(z_0)|^2 \mathcal{O}(T e^{-T_0/T}).\\
&\leq -\frac{d_1 T^2}{8} |\nabla H(z_0)|^2 + |\nabla H(z_0)|^2 \mathcal{O}(T e^{-T_0/T}),
\end{align*}
where for the last inequality $T\leq 1$ has been used. Note that the term $T e^{-T_0/T}$ decays quickly for small $T$. Thus for $d_1 \geq \mathcal{O}(e^{-T_0/T}/T)$ the function $V$ strictly decreases along the trajectories of $\Phi_{d,T} \circ \Phi_T$. The lower bound on $d_1$ vanishes exponentially for small $T$. This concludes that the origin is asymptotically stable under the dynamics \eqref{eq:exDT} (with $\tilde{f}$ instead of $f$) provided that $T$ is chosen small enough, i.e. $T\leq T_\text{max}$, where up to the exponential terms in $d_1$, $T_\text{max}$ only depends on the ratio $d_2/d_1$, and $L_\text{H}$. Due to the fact that the first-order approximations of the dynamics resulting from $f$ and $\tilde{f}$ about the equilibrium are $\epsilon$-close (by construction of $\tilde{f}$), the origin is likewise asymptotically stable for the dynamics resulting from $f$, and its region of attraction contains at least a small neighborhood of the origin. Consequently, due to the continuity of $V$, we may choose $\epsilon$ small enough such that $V(\tilde{q}_2,\tilde{p}_2)-V(q_0,p_0)<0$ for all $(q_0,p_0)$ outside a neighborhood of the origin, where $\tilde{q}_2$, $\tilde{p}_2$ denotes the trajectory resulting from \eqref{eq:exDT} (with $f$ instead of $\tilde{f}$). This shows that $A$ is likewise contained in the region of attraction of the origin under the dynamics \eqref{eq:exDT} with $f$ instead of $\tilde{f}$.
\end{proof}


\section{Discrete-time stability (convex case)}\label{App:DisStabilityConvex}
By using the change of variables $(q_k,p_k)\rightarrow (\hat{q}_k,\hat{p}_k)$, 
\begin{equation}
\hat{q}_k=q_k+\left(\frac{\beta}{1-2dT}-T\right)p_k, \quad \hat{p}_k=p_k,
\end{equation}
the discrete-time algorithm (\ref{eq:exDT}) can be reformulated as
\begin{align}
\hat{q}_{k+1}&=y_k-T \tau \nabla f(y_k),\\
\hat{p}_{k+1}&=(1-2dT)\hat{p}_k-T\nabla f(y_k),
\end{align}
where $\tau:=\beta/(1-2dT)$, and 
\begin{align}
y_k&=q_k + \beta p_k\\
&=\hat{q}_k + T(1- 2d\tau) \hat{p}_k.
\end{align}
The change of variables is motivated by the fact that the convexity and smoothness of the objective function implies $f(\hat{q}_{k+1})\leq f(y_k)$,
\begin{equation}
f(\hat{q}_{k+1})\leq f(y_k) -|\nabla f(y_k)|^2 \left(T \tau- \frac{T^2 \tau^2 L}{2}\right),
\end{equation}
where $L$ is the Lipschitz constant of the gradient of $f$. In the following it will be shown that the energy $H$ decreases along $\hat{q}_k$ and $\hat{p}_k$ provided that certain conditions on the parameters $T,d,$ and $\beta$ are met. We evaluate $H(\hat{q}_{k+1},\hat{p}_{k+1})$,
\begin{align}
H(\hat{q}_{k+1},\hat{p}_{k+1})&=\frac{1}{2} (1-2dT)^2 |\hat{p}_k|^2 - T (1-2dT) \hat{p}_k\T \nabla f(y_k) + \frac{T^2}{2} |\nabla f(y_k)|^2 + f(\hat{q}_{k+1})\\
&\leq\frac{1}{2} |\hat{p}_k|^2  - (2dT-2d^2 T^2) |\hat{p}_k|^2 - T (1-2dT) \hat{p}_k\T \nabla f(y_k) \nonumber\\
&\hspace{3cm} -|\nabla f(y_k)|^2 \left(T \tau - \frac{T^2}{2} (\tau^2 L + 1)\right) + f(y_k).
\end{align}
Due to the fact that $f$ is convex it holds that 
\begin{equation}
f(y_k)-f(\hat{q}_k)\leq T(1-2d\tau) \nabla f(y_k)\T \hat{p}_k,
\end{equation}
and as a result
\begin{align}
H(\hat{q}_{k+1},\hat{p}_{k+1})-H(\hat{q}_k,\hat{p}_k)&\leq -(2dT-2d^2T^2) |\hat{p}_k|^2 +2dT (T-\tau) \hat{p}_k\T \nabla f(y_k) \nonumber\\
&\hspace{3cm}- |\nabla f(y_k)|^2 (T \tau - \frac{T^2}{2}(\tau^2 L+1)).
\end{align}
The right-hand side of the above expression is guaranteed to decrease provided that the following matrix 
\begin{equation}
\left( \begin{array}{cc} 2dT-2d^2 T^2 &  -dT (T-\tau) \\ -dT (T-\tau) & T \tau -  T^2 (\tau^2 L + 1)/2 \end{array} \right) \label{eq:DTStabCond}
\end{equation}
is positive definite. Thus, the minimum of $f$ is an asymptotically stable equilibrium of the dynamics \eqref{eq:exDT} provided that \eqref{eq:DTStabCond} is positive semidefinite. In particular, choosing $T=\tau$, i.e., $\beta=T(1-2dT)$, $0<T \leq 1/\sqrt{L}$, $0 < dT < 1$, ensures asymptotic stability of the minimum. Moreover, for any fixed $\beta>0$ the above matrix can be made negative definite by choosing $T$ and $d$ sufficiently small.


\section{Smoothness assumptions on $f$}\label{App:Smooth}
This section discusses the implications of Assumption~\ref{Ass:funfund} compared to Assumption~\ref{Ass:fun}. We therefore consider any function $f$ that satisfies Assumption~\ref{Ass:funfund} and construct a sequence of functions $f_j$, where $j>0$ is an integer, in the following way: Let $f_j:\mathbb{R}^n \rightarrow \mathbb{R}$ be such that $f_j(0)=0$, $\nabla f_j(0)=0$, and
\begin{equation}
\left.\frac{\partial^2 f_j}{\partial x^2}\right|_{x}:= \int_{\mathbb{R}^n} \left.\frac{\partial^2 f}{\partial x^2}\right|_{\bar{x}} s_j(x-\bar{x}) \diff \bar{x}, \label{eq:smooth}
\end{equation}
where $s_j:\mathbb{R}^{n} \rightarrow \mathbb{R}_{\geq 0}$ is an infinitely differentiable function that has support on $B_{1/(2j)}^{\text{c}}$ and satisfies $\int_{\mathbb{R}^n} s_j(\bar{x}) \diff\bar{x}=1$. As a consequence, it holds that
\begin{equation*}
|\nabla f(x) - \nabla f_j(x)|\leq 2\bar{C}_\text{f}/j, \quad |f(x) - f_j(x)|\leq 2\bar{C}_\text{f}|x|/j,
\end{equation*}
for all $x\in \mathbb{R}^{n}$ and $j > 0$ (assuming $\bar{C}_\text{f}>C_\text{f}$). In other words, $\nabla f_j$ converges uniformly to $\nabla f$; the same holds for $f_j \rightarrow f$ on any compact subset of $\mathbb{R}^n$. As a result of \eqref{eq:smooth}, any essential upper or lower bound on the curvature of $f$ are translated to the functions $f_j$. This implies, for example, that for large enough $j$ the origin is an isolated non-degenerate critical point of $f_j$. In addition, each $f_j$, $j>0$, is infinitely differentiable and satisfies Assumption~\ref{Ass:fun}. We therefore consider the dynamical system \eqref{eq:mombased2TV}, where $f$ is replaced with $f_j$:
\begin{align}
z_j^+(t)=g_\text{s}(t,z_j(t),\nabla f_j(q_j(t)), \quad \forall t\in I, t\geq t_0, \qquad z_j(t_0)=z_0, \label{eq:mombased2TVj}
\end{align}
where $z_j(t):=(q_j(t),p_j(t))$. Compared to \eqref{eq:mombased2TV}, the dependence on $\nabla f_j$ is made explicit. Let $z(t)$ be the trajectory satisfying \eqref{eq:mombased2TVj}, where $f_j$ is replaced with $f$. 

In the continuous-time case, \eqref{eq:mombased2TVj} implies
\begin{align}
|z_j(t)-z(t)|&\leq\int_{t_0}^{t} |g_\text{s}(\hat{t},z_j(\hat{t}),\nabla f_j(q_j(\hat{t}))) - g_\text{s}(\hat{t},z(\hat{t}),\nabla f(q(\hat{t})))| \diff \hat{t} \nonumber\\
&\leq C_\text{g1} \int_{t_0}^{t} |z_j(\hat{t})-z(\hat{t})| \diff \hat{t} + 2 C_\text{g2} (t-t_0) \bar{C}_\text{f}/j, \label{eq:boundsmooth}
\end{align}
where $C_\text{g1}>0$ and $C_\text{g2}>0$ are Lipschitz constants related to $g_\text{s}$. By virtue of the Gr\"{o}nwall inequality, \eqref{eq:boundsmooth} readily implies $z_j(t)\rightarrow z(t)$ pointwise for any $t\in I$. A similar argument applies to the discrete-time case. 

We show next that, in case the equilibrium at the origin is uniformly stable (uniformity with respect to $j$ and $t_0$), the convergence $z_j(t) \rightarrow z(t)$ is in fact uniform in $t\in I$. As a consequence, the results from Proposition~\ref{Prop:mainTV} and similarly Proposition~\ref{Prop:main} generalize to the case where $f$ satisfies merely Assumption~\ref{Ass:funfund} instead of Assumption~\ref{Ass:fun}, as shown below.

\begin{proposition}\label{Prop:unifConv}
Let the origin be a stable equilibrium for \eqref{eq:mombased2TVj}, uniformly in $j$ and $t_0$ (for $j>0$ sufficiently large). If $z(t)$ converges to the origin for $t\rightarrow \infty$, $z_j(t)$ converges to $z(t)$, uniformly in $t\in I$.
\end{proposition}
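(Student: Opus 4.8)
The goal is to upgrade pointwise convergence $z_j(t) \to z(t)$ to uniform convergence on all of $I$, using the hypothesis that the origin is a stable equilibrium uniformly in $j$ and $t_0$, together with $z(t) \to 0$. The plan is to split the time axis at a carefully chosen finite time. First I would fix $\varepsilon > 0$. By uniform stability, there exists $\delta > 0$ (independent of $j$ and of the initial time) such that any trajectory of \eqref{eq:mombased2TVj} (or of the limiting system) that enters $B_\delta$ at some time stays within $B_{\varepsilon/3}$ thereafter. Since $z(t) \to 0$ as $t \to \infty$, there is a finite $T_\varepsilon \in I$ with $|z(t)| < \delta/2$ for all $t \geq T_\varepsilon$.

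Second, on the compact interval $[t_0, T_\varepsilon]$ I would invoke the Grönwall-type estimate \eqref{eq:boundsmooth} (and its discrete-time analogue): there I have
\begin{equation*}
|z_j(t) - z(t)| \leq \frac{2 C_\text{g2} (T_\varepsilon - t_0) \bar{C}_\text{f}}{j}\, \exp\!\big(C_\text{g1}(T_\varepsilon - t_0)\big), \quad t_0 \leq t \leq T_\varepsilon,
\end{equation*}
which, since $T_\varepsilon$ is now fixed, can be made smaller than $\min\{\varepsilon/3, \delta/2\}$ for all $j \geq J_1$ with $J_1$ depending only on $\varepsilon$. In particular, at $t = T_\varepsilon$ we get $|z_j(T_\varepsilon)| \leq |z(T_\varepsilon)| + |z_j(T_\varepsilon) - z(T_\varepsilon)| < \delta/2 + \delta/2 = \delta$, so $z_j(T_\varepsilon) \in B_\delta$.

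Third, for $t \geq T_\varepsilon$ I would use uniform stability twice. Since $z_j(T_\varepsilon) \in B_\delta$, uniform stability of \eqref{eq:mombased2TVj} gives $|z_j(t)| < \varepsilon/3$ for all $t \geq T_\varepsilon$; since $|z(T_\varepsilon)| < \delta/2 < \delta$, the same principle applied to the limiting system gives $|z(t)| < \varepsilon/3$ for all $t \geq T_\varepsilon$. Hence $|z_j(t) - z(t)| \leq |z_j(t)| + |z(t)| < 2\varepsilon/3 < \varepsilon$ on $[T_\varepsilon, \infty)$. Combining with the bound on $[t_0, T_\varepsilon]$ yields $\sup_{t \in I} |z_j(t) - z(t)| < \varepsilon$ for all $j \geq J_1$, which is the claimed uniformity. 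I would remark that the argument is identical in discrete time, using the absolutely-summable Grönwall bound in place of the integral one.

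\textbf{Main obstacle.} The delicate point is making sure the stability radius $\delta$ genuinely does not depend on $j$ or on $t_0$ — this is exactly what the hypothesis ``uniformly in $j$ and $t_0$'' is there to supply, so the work is in applying it correctly rather than in proving it. One subtlety to handle carefully is that uniform stability must be invoked for the perturbed systems \eqref{eq:mombased2TVj} with the \emph{same} $\delta$ for all large $j$, and also for the limiting system; I would phrase the statement of uniform stability so that it encompasses the limiting system as the $j \to \infty$ case (or note that the bounds $|\nabla f - \nabla f_j| \leq 2\bar C_\text{f}/j$ ensure the limiting vector field inherits the same stability modulus). A second minor point is ensuring $T_\varepsilon$ can be chosen in $I$ (trivial in continuous time; in discrete time just take the ceiling), and that the finite-horizon Grönwall constant is genuinely finite, which it is since $C_\text{g1}, C_\text{g2}, \bar C_\text{f}$ are all fixed constants and the horizon $T_\varepsilon - t_0$ is finite once $\varepsilon$ is fixed.
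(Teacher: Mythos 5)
Your proof is correct and follows essentially the same route as the paper: a finite-horizon Gr\"onwall bound from \eqref{eq:boundsmooth} up to a time after which $z$ lies inside the stability radius, followed by the uniform-in-$j$-and-$t_0$ stability of \eqref{eq:mombased2TVj} to control the tail of $z_j$. The only deviation is that your appeal to stability of the \emph{limiting} system is superfluous: since you chose $T_\varepsilon$ so that $|z(t)|<\delta/2$ for all $t\geq T_\varepsilon$ (and $\delta$ may be taken at most $\varepsilon$ without loss of generality), the tail of $z$ is already controlled by the convergence hypothesis alone, which is exactly how the paper's proof avoids assuming anything about the limit system.
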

\begin{proof}
We pick any $\epsilon>0$ and show that there exists an integer $N$, independent of $t$, such that $|z_j(t)-z(t)|<\epsilon$ for all $j>N$ and all $t\geq t_0$. Due to the uniform stability of the origin, there exists a $\delta>0$ (independent of $t_0\in I$ and $j$) such that $|z_j(t_0)|<\delta$ implies $|z_j(t)|<\epsilon/2$ for all $t\geq t_0$ and $j>0$ sufficiently large. The trajectory $z(t)$ converges to the origin. Hence, there exists a finite time $T\in I$ such that $|z(T)|< \delta/2$. Applying the Gr\"{o}nwall inequality to \eqref{eq:boundsmooth} yields
\begin{equation}
|z_j(t)-z(t)|\leq 2 C_{\text{g2}} (T-t_0) \bar{C}_\text{f} e^{C_\text{g1} (T-t_0)}/j, \quad \forall t\in I, t_0\leq t\leq T;
\end{equation}
a similar conclusion holds in the discrete-time case. Thus, choosing $j$ large enough guarantees that $|z_j(t)-z(t)|< \delta/2\leq \epsilon/2$ for all $t\in I$, $t_0\leq t\leq T$. Therefore $|z_j(T)|< \delta$, which readily implies $|z_j(t)|< \epsilon/2$ for all $t\geq T$. Combined with $|z(t)|<\epsilon/2$ for all $t\geq T$, this yields $|z_j(t)-z(t)|<\epsilon$ for all $t\geq t_0$.
\end{proof}

Verifying that the origin is stable (uniformly in $j$) is typically straightforward. In order to illustrate the ideas we consider the discussion of Section~\ref{Sec:OptiEx1}. The total energy $H_j$ (where $f$ is replaced with $f_j$) is well-defined for any $j>0$. For large enough $j$, the curvature of $f_j$ is a localized average of the curvature of $f$. The origin is a non-degenerate isolated critical point of $f$, and therefore the same applies to $f_j$ for large enough $j$. This concludes that in a neighborhood of the origin the total energy $H_j$ is upper and lower bounded:
\begin{equation*}
    \frac{1}{2} |p|^2 + \frac{\mu}{4} |q|^2 \leq H_j(q,p) \leq \frac{1}{2} |p|^2 + L |q|^2,
\end{equation*}
for all $j>0$ sufficiently large. These upper and lower bounds are independent of $j$, which, combined with the fact that $H_j$ is non-increasing along trajectories, readily implies stability of the origin (uniformly in $j$ and $t_0$) \citep[see, for example,][p.~189]{Sastry}. The same argument applies in the discrete-time case, where, by assumption, $f$ and $f_j$ are locally strongly convex. Thus, uniform stability follows from Appendix~\ref{App:ProofDTStab}.

The fact that the convergence $z_j \rightarrow z$ is uniform in $t$ implies that the convergence estimates form Proposition~\ref{Prop:mainTV} and Proposition~\ref{Prop:main} apply for $j\rightarrow \infty$. More precisely, we have
\begin{proposition}
Let the origin be a stable equilibrium for \eqref{eq:mombased2TVi} uniformly in $j$ and $t_0$ (for $j>0$ sufficiently large). Let $A\subset \mathbb{R}^{2n}$ be compact and such that $z_0\in A$ implies $z(t)\rightarrow 0$ for $t\rightarrow \infty$. Then, provided that for sufficiently large $j>0$ the trajectories $z_j(t)$ satisfy the estimates
\begin{equation}
    |z_j(t)|\leq \hat{C}_j |z_0| \frac{\rho(t)}{\rho(t_0)} \exp(-\alpha (t-t_0)), \quad \forall t\in I, t\geq t_0, \quad \forall z_0\in A, \label{eq:tmpconv}
\end{equation}
where $\hat{C}_j>0$ and $\alpha>0$ are constant, and $\rho: \mathbb{R}_{\geq 0} \rightarrow \mathbb{R}_{>0}$ is continuous and monotonically decreasing, there exists a constant $\hat{C}$ such that $z(t)$ satisfies the estimate
\begin{equation}
    |z(t)|\leq \hat{C} |z_0| \frac{\rho(t)}{\rho(t_0)} \exp(-\bar{\alpha} (t-t_0)), \quad \forall t\in I, t\geq t_0, \quad \forall z_0\in A, \label{eq:tmpconv2}
\end{equation}
for any constant $\bar{\alpha}<\alpha$.
\end{proposition}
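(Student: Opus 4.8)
The plan is to pass to the limit $j\to\infty$ in the uniform convergence estimate \eqref{eq:tmpconv}, using Proposition~\ref{Prop:unifConv} to control the error between $z_j$ and $z$. The delicate point is that the constants $\hat C_j$ in \eqref{eq:tmpconv} are \emph{not} assumed to be uniformly bounded in $j$, so one cannot simply take the limit inside the estimate and hope $\hat C_j$ converges. Instead I would exploit the flexibility in the exponent: we are allowed to give up an arbitrarily small amount of decay rate, replacing $\alpha$ by $\bar\alpha<\alpha$, and this slack is precisely what absorbs the uncontrolled growth of $\hat C_j$.

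First I would fix $z_0\in A$ and $t_0\in I$, and fix $\bar\alpha<\alpha$; set $\gamma:=\alpha-\bar\alpha>0$. For each large $j$, write $z(t)=z_j(t)+(z(t)-z_j(t))$ and bound $|z(t)|$ by $|z_j(t)|+|z_j(t)-z(t)|$. The first term is controlled by \eqref{eq:tmpconv}; for the second, Proposition~\ref{Prop:unifConv} gives, for any $\epsilon>0$, an index $N(\epsilon)$ so that $|z_j(t)-z(t)|<\epsilon$ for all $t\ge t_0$ once $j>N(\epsilon)$. The point is then to balance: on the time interval $t_0\le t\le T_j$, where $T_j$ is chosen so that the linearly decaying envelope $(\rho(t)/\rho(t_0))\exp(-\alpha(t-t_0))$ has dropped to roughly $\epsilon/\hat C_j$, the estimate \eqref{eq:tmpconv} itself gives $|z(t)|\le |z_j(t)|+\epsilon \le 2\hat C_j|z_0|(\rho(t)/\rho(t_0))\exp(-\alpha(t-t_0))$ plus a controlled error; for $t>T_j$ both $|z_j(t)|$ and the accumulated sup-error $\epsilon$ are small, and one uses the argument of Proposition~\ref{Prop:mainTV} (or, more simply, the fact that once the trajectory has entered a small ball $B_\delta$ it converges with the full rate $(\rho(t)/\rho(\tau))\exp(-\alpha(t-\tau))$, by Lemma~\ref{Lem:AsymptConvTV}) to propagate the decay forward from $t=T_j$. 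Matching the prefactor picked up at the switchover time $T_j$ against the gained factor $\exp(-\gamma(T_j-t_0))$ yields a single constant $\hat C$, independent of $j$, multiplying the envelope $(\rho(t)/\rho(t_0))\exp(-\bar\alpha(t-t_0))$; one then lets $j\to\infty$ (equivalently, takes $\epsilon\to0$) to conclude \eqref{eq:tmpconv2}.

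Concretely, the cleanest route is probably: (i) invoke Lemma~\ref{Lem:AsymptConvTV} applied to the limiting dynamics \eqref{eq:mombased2TVi} (with $f$ itself) to get a ball $B_\delta$ and constant $\tilde C$ such that trajectories entering $B_\delta$ at time $\tau$ satisfy $|z(t)|\le\tilde C|z(\tau)|(\rho(t)/\rho(\tau))\exp(-\alpha(t-\tau))$; (ii) for each $z_0\in A$ use \eqref{eq:tmpconv} together with uniform convergence $z_j\to z$ to show the limiting trajectory $z(t)$ enters $B_\delta$ by some finite time, and — crucially — that this entry time can be taken uniform over the compact set $A$ (here one mimics the Heine–Borel covering argument in the proof of Proposition~\ref{Prop:main}, using continuous dependence on initial conditions for the limiting flow); (iii) on $[t_0,T]$ bound $|z(t)|$ by $\sup_{z_0\in A}\sup_{t_0\le t\le T}|z(t)|=:C_A<\infty$, which is finite by compactness and continuity; (iv) splice the two bounds as in \eqref{eq:prooftmp1111}, obtaining $|z(t)|\le C_A$ for $t\le T$ and $|z(t)|\le\delta\tilde C(\rho(t)/\rho(T))\exp(-\alpha(t-T))$ for $t>T$; then absorb the $\rho(T)^{-1}\exp(\alpha T)$ factor into a constant at the cost of downgrading $\alpha$ to $\bar\alpha$, since $\rho$ is positive on the compact interval $[t_0,T]$ and $\exp((\alpha-\bar\alpha)(t-T))$ can be made $\ge$ that factor eventually, or more directly just enlarge $\hat C$.

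The main obstacle is step (ii): establishing that the entry time into $B_\delta$ is \emph{uniform} over $A$ for the limiting trajectory. The hypothesis only gives pointwise convergence $z(t)\to0$ for each $z_0\in A$, plus the $j$-uniform stability and the (non-uniform-in-$j$) estimate \eqref{eq:tmpconv}. The trick is that \eqref{eq:tmpconv}, for any \emph{single} fixed large $j$, already provides a $j$-specific but $z_0$-uniform decay bound $|z_j(t)|\le\hat C_j|z_0|(\rho(t)/\rho(t_0))\exp(-\alpha(t-t_0))$ over all of $A$; picking $j$ large enough that $|z_j(t)-z(t)|<\delta/4$ uniformly in $t$ (Proposition~\ref{Prop:unifConv}), and then picking a time $T$ large enough that $\hat C_j(\sup_A|z_0|)(\rho(T)/\rho(t_0))\exp(-\alpha(T-t_0))<\delta/4$, forces $|z(T)|<\delta/2$ for \emph{every} $z_0\in A$ simultaneously. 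So the uniformity is inherited from the $z_0$-uniformity already present in \eqref{eq:tmpconv} for a fixed $j$, combined with the $t$-uniformity of Proposition~\ref{Prop:unifConv} — no separate covering argument is even needed. After that, the spmeasurement in (i)–(iv) is routine, and the downgrade $\alpha\rightsquigarrow\bar\alpha$ is the standard device for swallowing the transient prefactor. The discrete-time case is identical with integrals replaced by sums.
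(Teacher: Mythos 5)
There is a genuine gap, and it sits exactly at the step you call ``routine'': propagating the decay of the \emph{limiting} trajectory after it enters the ball $B_\delta$. In steps (i) and in the earlier ``balance'' sketch you invoke Lemma~\ref{Lem:AsymptConvTV} (equivalently, the argument of Proposition~\ref{Prop:mainTV}) for the limiting dynamics \eqref{eq:mombased2TV} with $f$ itself. But the entire point of this proposition and of Appendix~\ref{App:Smooth} is that $f$ is only assumed to satisfy Assumption~\ref{Ass:funfund}: $\nabla f$ is merely Lipschitz, the Hessian exists only almost everywhere, the dynamics $g$ need not be continuously differentiable with Lipschitz derivatives, and the linearization $\partial g/\partial z\vert_{t,z=0}$ required by Assumption~\ref{Ass:linTV} need not even be defined. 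Lemma~\ref{Lem:AsymptConvTV} is therefore not available for the limiting flow; if it were, the proposition would be vacuous, since one could apply Proposition~\ref{Prop:mainTV} to $z(t)$ directly and would not even need to give up $\alpha$ for $\bar\alpha$. Your preparatory observations are fine --- in particular the remark that the $z_0$-uniformity of \eqref{eq:tmpconv} for a single fixed large $j$, combined with the $t$-uniform convergence of Proposition~\ref{Prop:unifConv}, forces a uniform entry time of $z(t)$ into $B_\delta$ over all of $A$ --- but after that time you have no local decay estimate for $z$ to splice in, and the slack $\bar\alpha<\alpha$ can only absorb a finite-time prefactor, not replace the missing tail estimate. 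Comparing with a fixed $z_j$ for $t>T_j$ does not repair this either: it yields $|z(t)|\le \hat C_j|z_0|(\rho(t)/\rho(t_0))e^{-\alpha(t-t_0)}+\epsilon_j$ with an additive error that does not decay in $t$, and since $\hat C_j$ is not uniform in $j$ you cannot simply send $j\to\infty$ inside a single uniform-in-$t$ bound.

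The paper's proof avoids any local analysis of the limiting flow altogether. It uses the rate slack to recast the claim as the statement that the weighted quantity $|z(t)|\,\rho(t_0)\rho(t)^{-1}\exp((\alpha-\epsilon)(t-t_0))$ tends to zero as $t\to\infty$: for each sufficiently large $j$ the hypothesis \eqref{eq:tmpconv} gives this for $z_j$ (the weight is dominated by $\hat C_j|z_0|e^{-\epsilon(t-t_0)}$), and the uniform-in-$t$ convergence $z_j\to z$ from Proposition~\ref{Prop:unifConv} is then used to interchange the limits in $t$ and $j$, yielding the same vanishing for $z$ and hence \eqref{eq:tmpconv2} with $\bar\alpha=\alpha-\epsilon$. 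So the only properties of the limiting dynamics that enter are those already needed for Proposition~\ref{Prop:unifConv} (uniform stability and pointwise convergence of $z(t)$ to the origin). If you want to keep your splicing architecture, you would have to replace step (i) by an argument of this type --- i.e., derive the $t>T$ decay of $z$ from the family $\{z_j\}$ and the uniform convergence, not from a linearization of the non-smooth limit.
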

\begin{proof}
For any $\epsilon>0$, \eqref{eq:tmpconv} implies that for any $j>0$ sufficiently large,
\begin{equation}
    \lim_{t\rightarrow \infty} |z_j(t)|\frac{\rho(t_0)}{\rho(t)} \exp((\alpha-\epsilon) (t-t_0)) = 0.
\end{equation}
Due to the fact that $z_j$ converges uniformly to $z$, it follows that
\begin{equation}
    \lim_{t\rightarrow \infty} |z(t)|\frac{\rho(t_0)}{\rho(t)} \exp((\alpha-\epsilon) (t-t_0)) = \lim_{j\rightarrow \infty} \lim_{t\rightarrow \infty} |z_j(t)|\frac{\rho(t_0)}{\rho(t)} \exp((\alpha-\epsilon) (t-t_0)) = 0,
\end{equation}
which yields the desired result.
\end{proof}

\bibliography{literature}

\end{document}